\newcommand{\ie}{\hbox{\it i.e.\ }}
\newlength\fullwidth
\numberwithin{equation}{section}
\DeclareMathSymbol{\leqslant}{\mathalpha}{AMSa}{"36} 
\DeclareMathSymbol{\geqslant}{\mathalpha}{AMSa}{"3E} 
\DeclareMathSymbol{\eset}{\mathalpha}{AMSb}{"3F}     
\renewcommand{\leq}{\;\leqslant\;}                   
\renewcommand{\geq}{\;\geqslant\;}                   
\newcommand{\mintwo}[2]{\min_{\substack{#1 \\ #2}}} 
\def\1{\ifmmode {1\hskip -3pt \rm{I}} \else {\hbox {$1\hskip -3pt \rm{I}$}}\fi}
\newcommand{\var}{\operatorname{Var}}
\newcommand{\si}{\sigma } 
\newcommand{\tmix}{T_{\rm mix}} 
\newcommand{\trel}{T_{\rm rel}} 
\newcommand{\sign}{{\rm sign}}
\renewcommand{\l}{\lambda}
\renewcommand{\a}{\alpha}
\renewcommand{\d}{\delta}
\renewcommand{\t}{\tau}
\newcommand{\g}{\gamma}
\newcommand{\G}{\Gamma}
\newcommand{\z}{\zeta}
\newcommand{\e}{\varepsilon}
\renewcommand{\r}{\rho}
\renewcommand{\o}{\omega}
\renewcommand{\O}{\Omega}
\newcommand{\gap}{{\rm gap}}
\newcommand{\tc}{\thinspace |\thinspace}
\newcommand{\integ}[1]{\lfloor #1\rfloor}
\newtheorem{theorem}{Theorem}[section] 
\newtheorem{lemma}[theorem]{Lemma} 
\newtheorem{proposition}[theorem]{Proposition} 
\newtheorem{corollary}[theorem]{Corollary} 
\newtheorem{remark}[theorem]{Remark}
\newcommand{\cD}{\ensuremath{\mathcal D}} 
\newcommand{\cE}{\ensuremath{\mathcal E}} 
\newcommand{\cG}{\ensuremath{\mathcal G}}
\newcommand{\cL}{\ensuremath{\mathcal L}}
\newcommand{\cS}{\ensuremath{\mathcal S}}
\newcommand{\bbE}{{\ensuremath{\mathbb E}} }
\newcommand{\bbL}{{\ensuremath{\mathbb L}} } 
\newcommand{\bbN}{{\ensuremath{\mathbb N}} } 
\newcommand{\bbP}{{\ensuremath{\mathbb P}} } 
\newcommand{\bbR}{{\ensuremath{\mathbb R}} }
\newcommand{\bbZ}{{\ensuremath{\mathbb Z}} }
\newcommand{\wb}{\overline } 
\newcommand{\gep}{\varepsilon}
\newcommand{\Tm}{T_{\rm mix}}
\newcommand{\Tr}{T_{\rm rel}}
\renewcommand{\ll}{\left\langle}
\newcommand{\rr}{\right\rangle}
\newcommand{\gO}{\Omega}
\newcommand{\ind}{{\bf 1}}
\newcommand{\gd}{\delta}
\newcommand{\dd}{\mathrm{d}}
\newcommand{\gga}{\gamma}
\newcommand{\D}{\Delta}
\newcommand{\gG}{\Gamma}
\begin{document}

\title[Dynamics of a polymer with repulsion]{Polymer dynamics in the
  depinned phase: metastability with logarithmic barriers}

\author[P. Caputo]{Pietro Caputo}
\address{P. Caputo, Dipartimento di Matematica,
  Universit\`a Roma Tre, Largo S.\ Murialdo 1, 00146 Roma, Italia.
  e--mail: {\tt caputo@mat.uniroma3.it}}

\author[H. Lacoin]{Hubert Lacoin}
\address{H. Lacoin, Dipartimento di Matematica, Universit\`a Roma Tre,
Largo S.\ Murialdo 1, 00146 Roma, Italia. e--mail: {\tt lacoin@math.jussieu.fr}}

\author[F. Martinelli]{Fabio Martinelli} 
\address{F. Martinelli, Dipartimento di Matematica,
  Universit\`a Roma Tre, Largo S.\ Murialdo 1, 00146 Roma, Italia.
  e--mail: {\tt martin@mat.uniroma3.it}} 
\author[F. Simenhaus]{Fran\c{c}ois Simenhaus}
\address{F. Simenhaus, Dipartimento di Matematica,
  Universit\`a Roma Tre, Largo S.\ Murialdo 1, 00146 Roma, Italia.
  e--mail: {\tt simenhaus@mat.uniroma3.it}}
\author[F.L. Toninelli]{Fabio Lucio Toninelli} 
\address{F.L. Toninelli, CNRS and ENS Lyon, Laboratoire de Physique\\
  46 All\'ee d'Italie, 69364 Lyon, France.  e--mail: {\tt
    fabio-lucio.toninelli@ens-lyon.fr}}

\begin{abstract} We consider the stochastic evolution of a $(1+1)$-dimensional polymer in the depinned regime. At equilibrium the
  system exhibits a double well structure: the polymer lies
  (essentially) either above or below the repulsive line. As a
  consequence  one expects a metastable behavior with
  rare jumps between the two phases combined with a fast
  thermalization inside each phase. However the
  energy barrier between these two phases is only logarithmic in the
  system size $L$ and therefore the two relevant time scales are only polynomial
  in $L$ with no clear-cut separation between them. The whole evolution is governed by a subtle competition
  between the diffusive behavior inside one phase and the jumps across
  the energy barriers. In particular the usual scenario in which the
  tunneling time coincides with the exponential of the energy
  barrier breaks down. Our main results are:
  (i) a proof that the mixing time of the system lies between
  $L^{\frac 52}$ and $L^{\frac 52
    +2}$;
(ii) the identification of two regions associated 
with the positive and  negative phase of the polymer  together with
the proof of the
asymptotic exponentiality  of the tunneling time between them with
rate equal to a half of the spectral gap.
\\
\\ 
2000 
\textit{Mathematics Subject Classification: 60K35, 82C20
} 
\\
\\
\textit{Keywords: reversibile
  Markov chains, polymer pinning model, metastability, spectral gap, mixing time, coupling, quasi-stationary distribution.}
\end{abstract}

\thanks{This work was
  supported by the European Research Council through the ``Advanced
  Grant'' PTRELSS 228032}

\maketitle

\thispagestyle{empty}

\section{Introduction, model and results}\label{introd}
Random polymers are commonly used in statistical mechanics to model a variety of 
interesting physical phenomena.
A rich class of models with a non trivial behavior is
obtained by considering a simple random walk path interacting with a
defect line in the thermodynamic limit when the length of the path
tends to infinity.  
The equilibrium of these so-called polymer pinning models has
been studied in depth in the mathematical literature, and the
associated localization/delocalization phase transition is, nowadays,
a well understood phenomenon, even in the presence of non homogeneous
interactions; see \cite{cf:GB} for a recent survey. 

Markovian stochastic
dynamics of random pinned polymers, on the other hand, 
have received much less attention from a mathematical point of view. 
Besides their importance in  bio-physical applications (see 
e.g.\ \cite{cf:dna2,cf:dna1} and references therein), the stochastic evolution of polymer models
poses new challenging probabilistic problems from many points of view and the
connection between the equilibrium and dynamical properties of the
model is
still largely unexplored. 
In particular, we feel that 
the problem of how the polymer relaxes to the stationary distribution
(time scales, overcoming of energy barriers, metastability, patterns
leading to equilibrium) still lacks a satisfactory solution
even in the simplest homogeneous models; see \cite{CMT1} for some
initial results in this direction.

In this paper we consider the dynamics 
of a homogeneous polymer model interacting with a repulsive defect
line with two main motivations in mind:
\begin{enumerate}[(i)]
\item the repulsive regime is characterized 
by a relaxation to equilibrium occurring on a time scale
certainly much larger \cite{CMT1}  than the usual diffusive one which
is typical of the e.g.
neutral case\footnote{In the neutral case (absence of
  an interaction
  between the polymer and the line) our process is nothing but
  the usual (finite) symmetric simple exclusion model.} \cite{cf:Wilson}. The new
scale is clearly the result of a complicate competition in the polymer
evolution between diffusive
behavior and jumps against energy barriers. 
\item The whole relaxation mechanism should show certain typical
features of metastable evolution but in a very \emph{a-typical}
context\footnote{Over the years there have many different formulations
  of ``metastability''. We refer in particular to \cite{metabook,Sco,Bo} and
  to the recent contributions \cite{BL1,BL2} where, as in our case,
  energy barriers are only logarithmic in the characteristic size of
  the system. We feel however that our situation does not fit
  completely in any
  of the quoted contexts.} in
which the relevant relaxation 
time scales are only polynomial in the size $L$ of the system (\ie the
energy barriers are only logarithmic in $L$), with little
separation between the mixing time inside one phase
and the global mixing time\footnote{We cannot necessarily be
  very precise here about the exact meaning of these mixing times but their
  definition and their role will appear clearly later on.}. A
signature of this fact can be found in the anomalous growth with $L$
of the
global mixing time, a growth which is much more rapid than the naive guess
based on the usual rule $T_{\rm mix} \approx \exp(\D E)$, with $\D E$
the so called activation energy. In order
to appreciate the novelty of such a situation it is useful to compare
it to another well known case, namely the Glauber dynamics for the low
temperature Ising model in a square box with free boundary \cite{MM},
for which a very precise analysis of the metastable behavior was
possible exactly because of a sharp separation, at an exponential level,
between the two time scales.     
\end{enumerate}

\subsection{Dynamics of the polymer pinning model}
Let $\O=\O_{2L}$ denote the set of all lattice 
paths (polymers) starting at $0$ and ending at $0$ after $2L$ steps, $L\in\bbN$:
$$
\O =
\{\eta\in\bbZ^{2L+1}:\;
\eta_{-L}=\eta_L=0\,,\;\eta_{x+1}=\eta_x\pm 1\,,\,\;x=-L,\dots,L-1\}\,.$$
The stochastic dynamics is defined by the natural spin-flip continuous time Markov chain with state space $\O$. Namely, sites $x=-L+1,\dots,L-1$ are equipped with independent rate $1$ Poisson clocks. When site $x$ rings, 
the height $\eta_x$ of the polymer at $x$ is updated according to the rules: if $\eta_{x-1}=\eta_{x+1}\pm 2$ then do nothing; if $\eta_{x-1}=\eta_{x+1}=h$, and $|h|\neq 1$, then set $\eta_x = h\pm 1$ with equal probabilities;  if 
$\eta_{x-1}=\eta_{x+1}=+1$, then set $\eta_x = 0$ with probability $\frac{\l}{\l+1}$ and $\eta_x = 2$ with probability $\frac{1}{\l+1}$; similarly, if $\eta_{x-1}=\eta_{x+1}=-1$, then set $\eta_x = 0$ with probability $\frac{\l}{\l+1}$ and $\eta_x = -2$ with probability $\frac{1}{\l+1}$.
Here $\l>0$ is a parameter describing the strength of the attraction ($\l>1$) or repulsion ($\l<1$) 
between the polymer and the line $h\equiv 0$. The infinitesimal generator of the Markov chain is given by 
\begin{equation}\label{genero}
\cL f(\eta) = \sum_{x=-L+1}^{L-1} r_{x,+}(\eta)\left[f(\eta^{x,+})-f(\eta)\right] 
+ \sum_{x=-L+1}^{L-1} r_{x,-}(\eta)\left[f(\eta^{x,-})-f(\eta)\right]\,,
\end{equation}
where: $f$ is a function $\O\mapsto\bbR$; $\eta^{x,\pm}$ denotes the configuration which coincides with $\eta$ at every site $y\neq x$ and equals $\eta_{x}\pm 2$ at site $x$; the rates $r_{x,\pm}$ are zero unless $\eta_{x-1}=\eta_{x+1}$, and if $\eta_{x-1}=\eta_{x+1}=h$ they satisfy  
$r_{x,\pm} = \frac12$ for $h\neq \pm 1$, and $r_{x,\mp} = \frac{\l}{\l+1}=1-r_{x,\pm}$,  for $h=\pm1$.

The process defined above is the {\em heat bath} dynamics for the homogeneous {\em polymer pinning model}, 
with equilibrium measure $\pi=\pi_{2L}^\l$ on $\O$  defined by
\begin{equation}\label{equ}
 \pi^{\l}_{2L}(\eta)
 = \frac{\l^{N(\eta)}}{Z_{2L}^\l}\,,
\end{equation}
where $N(\eta) = \#
\{x\in\{-L+1,\dots,L-1\}\,:\;\eta_x=0\}$ denotes the number of zeros in the path $\eta\in\O$ and $Z_{2L}^\l = \sum_{\eta'\in \O}
 \l^{N(\eta')}$. For every $\l>0$, $L\in\bbN$, $\pi=\pi_{2L}^\l$  is the unique reversible invariant measure for the Markov chain.

\subsection{Relaxation to equilibrium}
 The equilibrium properties of the polymer pinning model have been studied in detail, cf.\ e.g.\ \cite{Iso_Yos} or \cite[Section 2]{cf:GB} for an extensive review. 
 In particular,  it is well known that, under the measure $\pi_{2L}^\l$,  for $\l>1$ the path is strongly localized with a non vanishing density of zeros, while for $\l<1$ the path is delocalized with $\sqrt L$ height fluctuations and with 
 the number of zeros stochastically dominated by a geometric random variable with $\l$-dependent parameter. 
The dynamical counterpart of this localization/delocalization transition has not been fully understood yet. 
Some progress in this direction has been reported in \cite{CMT1}, where various bounds on the spectral gap 
and mixing time of the Markov chain were obtained, together with estimates on the decay of time correlations. 

We recall that the spectral gap is the smallest nonzero eigenvalue of $-\cL$, and one is often interested in the {\em relaxation time} $\trel=1/\gap$ which governs decay to equilibrium in $\bbL^2(\pi)$, 
while the {\em mixing time} $\tmix(\d)$, for $\d\in(0,1)$, is the smallest time $t$ such that 
\begin{equation}\label{tvar}
\max_\eta\|P_t(\eta,\cdot)-\pi\|\leq \d\,,
\end{equation}
where $P_t(\eta,\cdot)$ denotes the distribution of the Markov chain at time $t$ with initial state $\eta\in\O$,  
and $\|\mu-\nu\| $ is the usual total variation distance between two probability measures. 
When $\d=\frac1{2e}$ we often write simply $\tmix$
instead of $\tmix(\d)$.  With these conventions one has $\trel\leq \tmix$ always, and the inequality is strict in general.

A dynamical phase transition occurs when we move from the localized regime $\l>1$ to the delocalized regime $\l<1$. 
It was shown in \cite{CMT1}, see Theorem 3.4 and Theorem 3.5
there, that  for $\l\geq 1$ one has $ \trel= O(L^2)$  and $\tmix= O(L^2\log L)$ for all $L$, while 
for $\l<1$ one has 
\begin{equation}\label{5/2}
\trel\geq L^{\frac52 - \e}\,,
\end{equation}
for all $\e>0$, provided $L$ is large enough.

We refer to \cite{CMT1} for results and conjectures concerning the localized regime $\l>1$. Here, we consider the 
delocalized regime, i.e.\ in the repulsive case $\l<1$. The first question we address concerns an upper bound on the relaxation time 
$\trel$ and the mixing time $\tmix$. It is worth noting that even a crude    
polynomial bound is non-trivial.  
We refer to \cite{MR,CMT1}
for polynomial bounds for the model with a horizontal wall at level zero, i.e.\ when lattice paths are constrained 
to be non negative. On the other hand, without the wall constraint,
the equilibrium measure $\pi$ is known to be concentrated, as
$L\to\infty$, on configurations in which the density of monomers in the
upper (lower) half plane is approximately either one  or zero. However a
mathematical working definition of a candidate for the plus or minus
phase  for the polymer is not so obvious and we have been forced
to introduce a mesoscopic parameter $\ell$ (\ie $L\gg \ell \gg 1$) and define  $\O^\pm$ by
\begin{equation}\label{oplusminus}
 \Omega^+=\{\eta\in\O\,:\;\eta_x> 0\,,\;-L+\ell < x< L-\ell\}\,,\qquad \O^-=-\O^+\,,
\end{equation}
where, for any set $A$ of polymer configurations $-A = \{\eta\in
\O\,:\;-\eta\in A\}$). 

The presence of the two phases associated to $\O^\pm$ dramatically changes the relaxation scenario, with a bottleneck at the set $\O\setminus( \O^+\cup\O^-)$. As explained in \cite[Section 6]{CMT1}, one may suspect that $\trel\sim L^{\frac52}$ is the correct asymptotic behavior in the delocalized regime.  Let us briefly recall the heuristic reasoning behind this prediction.

The time to reach equilibrium can be roughly thought of as the time needed to switch from, say, $\O^-$ to $\O^+$. 
A point $x$ such that $\eta_x=0$ and $\eta_{x-1}\neq \eta_{x+1}$ is called a {\em crossing} of the polymer. Note that any zero (and therefore any crossing) $x$ must belong to the set $E_L$ of points in the segment 
$\{-L+2,\dots,L-2\}$ which have the same parity as $L$. 
Since there are typically very few zeros at equilibrium, 
one may consider the 
extreme case where at most one crossing $\xi$ is allowed at all times. In this case, the 
time evolution of $\xi$  
should be essentially described by a suitable birth and death process
or random walk
on $E_L$; see Figure \ref{fig:bulle}.

\begin{figure}[htp]
\begin{center}
\includegraphics[width=0.99\textwidth]{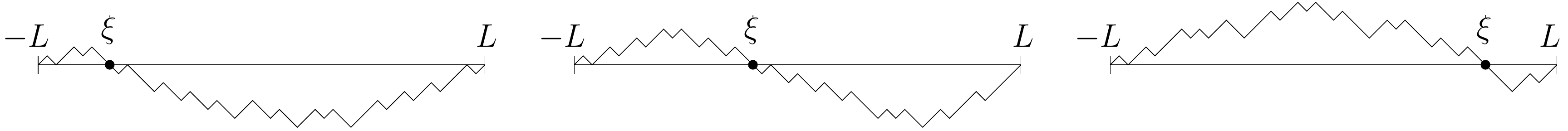}
\end{center}
\caption{From left to right a snapshot sequence of the motion of a single crossing $\xi$ which allows the system to switch from a mostly negative to a mostly positive configuration. 
}
\label{fig:bulle} 
\end{figure}
From equilibrium considerations, one knows that this random walk should have 
reversible invariant measure $\rho$ roughly proportional to  
\begin{equation}\label{rev32}
\r(x) \propto L^{3/2}(L+x)^{-3/2}(L-x)^{-3/2}\,,\qquad x\in E_L\,,
\end{equation} 
and that its relaxation time can be 
bounded from above and below by constant multiples of $L^{5/2}$; 
see Lemma \ref{equo} and Lemma \ref{52} below for more details. 
Notice that, although the measure $\rho$ gives uniformly (in $L$)
positive mass to the two \emph{attractors} $x^{\pm}=\pm L$, the drift which
pushes the random walk away from the saddle $x=0$ is proportional to
the inverse of the distance from the attractors. In particular the
naive guess for the mixing time $T_{\rm mix}\approx 1/\rho(0)=
O(L^{3/2})$ is wrong.

This heuristics is turned into the rigorous bound (\ref{5/2}) by using a suitable test function 
in the variational principle that characterizes the spectral gap; see \cite[Section 6]{CMT1}. 
However, it seems very hard to give a rigorous upper bound on $\trel$ of the same order of magnitude. We obtain a bound that can be off by at most two powers of $L$.
\begin{theorem}
 \label{th1}
For any $\lambda<1$,
\begin{equation*}
 \limsup_{L\to\infty} \frac{\log \tmix}{\log L}\leq \frac{5}{2}+2\,.
\end{equation*}
\end{theorem}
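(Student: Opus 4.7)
The plan is to upper bound the relaxation time $\trel$ via a two-scale decomposition of $\Omega$ and then convert to a mixing-time bound. The guiding picture from the introduction is that the chain has two relevant time scales: fast local equilibration inside each "phase" $\Omega^{\pm}$, and slow tunneling across the centre, the latter governed by a "main crossing" moving as a birth-death walk on $E_L$ with invariant measure $\rho$ from \eqref{rev32} and relaxation time of order $L^{5/2}$.

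First, I would fix a mesoscopic scale $\ell$ with $1 \ll \ell \ll L$ and partition $\Omega$ into equivalence classes indexed by the macroscopic sign pattern of the polymer at scale $\ell$ (essentially the position of the main crossing, when there is one). For each class the polymer is forced to keep definite sign outside the central region, so the pinning contributes only a bounded multiplicative factor to the Dirichlet form, and a comparison of Dirichlet forms with the symmetric simple exclusion process on a segment of length $2L$ gives a within-class relaxation time of order $L^{2+o(1)}$. For the projected chain on the classes I would construct an explicit canonical flow that mimics the birth-death dynamics on $E_L$: the flow is dual to the test function used in \cite[Section 6]{CMT1} to prove \eqref{5/2}, and a congestion estimate against $\rho$ yields a projected relaxation time of order $L^{5/2+o(1)}$. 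Composing the two via a standard block-decomposition theorem for reversible chains (in the Jerrum--Son--Tetali--Vigoda spirit) gives $\trel \leq L^{9/2 + o(1)}$.

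Since $\log(1/\pi_{\min}) = \Theta(L)$, the naive bound $\tmix \leq \trel \log(1/\pi_{\min})$ would cost an extra factor of $L$ and fall short of the claim by one power. To close this gap I would run a grand coupling of two copies of the chain, started from the worst-case initial conditions and driven by shared Poisson clocks, in the spirit of Wilson's argument for the simple exclusion process: an appropriately chosen height-gap functional contracts on the time scale $\trel$, and a union bound over the $O(L)$ monomer sites (together with standard $L^2 \to $ TV estimates inside each phase) closes the total variation distance in time $O(\trel \log L) = L^{9/2 + o(1)}$.

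The hardest step is the projected-gap upper bound. The matching lower bound \eqref{5/2} was obtained from a single test function, but an upper bound on the projected relaxation time requires constructing an explicit flow on the partition classes whose mass distribution matches $\rho$, and correctly handling the ill-defined central region where multiple crossings may coexist. A secondary subtlety is choosing $\ell$ so that both the within-class and the projected bounds lose only an $L^{o(1)}$ factor, and verifying that the block-decomposition theorem applies with that overhead; the Wilson-style coupling step is also delicate because the pinning breaks exact monotonicity of the polymer dynamics.
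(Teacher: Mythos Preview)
Your overall architecture matches the paper's: a Jerrum--Son--Tetali--Vigoda decomposition giving $\trel\lesssim L^2\cdot L^{5/2}=L^{9/2}$, followed by an argument that replaces the naive $\log(1/\pi_*)\asymp L$ factor in $\tmix\le\trel\log(1/\pi_*)$ by $\log L$. Several implementation choices differ, however, and a couple of points in your sketch are either incorrect or too vague to carry through.

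First, the dynamics \emph{is} monotone (Section~\ref{mono}); the pinning does not break this. Monotonicity is in fact a central tool throughout the paper (burn-in lemma, censoring, FKG), so this misconception should be dropped rather than worked around.

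Second, your partition by ``position of the main crossing'' is ill-defined when several crossings coexist, and more importantly you need the partition to control the escape-rate parameter $\gamma$ in the JSTV bound (Proposition~\ref{jerr}). The paper resolves both issues by decomposing along the \emph{full} sign pattern $\sigma\in\{\pm1\}^{O_L}$ and, crucially, first restricting the chain to the set $\Omega^o$ of configurations with at most $c_o\log L$ crossings and at most $c_o\log L$ zeros per excursion (see \eqref{oo}). This restriction is what forces $\gamma=O((\log L)^2)$; without it $\gamma$ could be of order $L$ and the decomposition bound would be useless. Justifying the restriction to $\Omega^o$ requires the burn-in Lemma~\ref{lemma:peq} in a specific way (Lemma~\ref{trelo}). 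Your sketch does not address this.

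Third, for the projected gap the paper does not use canonical paths. After the sign decomposition it performs a \emph{second} JSTV decomposition by the number $n$ of crossings, shows the birth--death chain on $n$ has gap of order $1$ (Lemma~\ref{gapbar}), and for fixed $n$ compares with a ``re-equilibration'' dynamics in which each crossing is resampled between its neighbours (Lemma~\ref{neq}); the $n=1$ case gives the factor $L^{-5/2}$ (Lemma~\ref{52}), and the re-equilibration gap is shown to be $\ge L^{-\epsilon}$ via coupling arguments that exploit the heavy-tailed inter-crossing law (Proposition~\ref{lemma:epsilon}). A congestion approach on the sign classes may be workable, but the part you flag as hardest---building a flow that matches $\rho$ of \eqref{rev32} while handling the multi-crossing region---is precisely what is missing.

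Finally, for the $\trel\to\tmix$ step the paper does not use a Wilson-type height coupling. It uses the burn-in Lemma~\ref{lemma:peq}: after time $L^{2+\epsilon}$ the law started from $\wedge$ is within $o(1)$ of $\pi^+$, whose density against $\pi$ is bounded, so the $\bbL_2$ bound \eqref{trelvar} converts $\trel^o$ to $\tmix$ with only a $\log L$ loss (Lemma~\ref{trelo}).
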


The main tool for the proof of Theorem \ref{th1} is 
the analysis of an effective dynamics for the crossings of the polymer. 
To describe this, we introduce the variable $\si\in\{-1,+1\}^{O_L}$, where 
$O_L=\{-L,\dots,L\}\setminus E_L$ denotes the sites  
with the same parity of $L+1$.  
If $\eta$ is a configuration of the polymer, then $\eta_x\neq 0$ at any  $x\in O_L$, and we define 
$\si(\eta)$ by 
$\si_x=\sign(\eta_x)$. The projection of $\pi$ on $\cS=\{-1,+1\}^{O_L}$ is then
\begin{equation}\label{nus}
\nu(\si) = \sum_{\eta\,:\;\eta\sim\si}\pi(\eta)\,,
\end{equation}
where the sum is over all configurations $\eta$ compatible with the signs $\si$.  The field $\nu$ has non trivial long range correlations. 
%
Consider the heat bath dynamics for the variables $\si$: sites $x\in O_L$ are equipped with
independent rate $1$ Poisson clocks; when site $x$ rings we replace $\si_x$ by $\si'_x$ where the new sign $\si'_x$ is distributed according to the conditional probability $\nu(\cdot\tc\si_y\,,\;y\neq x)$, i.e.\ the probability \eqref{nus} conditioned on the value of $\si_y\,,\;y\neq x$. 
Denote by $\trel^\cS$ the corresponding relaxation time.  For this process, the exponent $5/2$ can be shown to be optimal. 
%
\begin{theorem}
 \label{th2}
For any $\lambda<1$,
\begin{equation*}
 \lim_{L\to\infty} \frac{\log \trel^\cS}{\log L}= \frac{5}{2}\,.
\end{equation*}
\end{theorem}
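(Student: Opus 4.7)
The matching exponents in the statement suggest building the proof around the fact that, on the scale of crossings, the sign configuration is described by the positions of a small number of sign changes performing an effective random walk on $E_L$ with reversible measure $\rho$ of \eqref{rev32}; by Lemma \ref{equo} and Lemma \ref{52} this walk has relaxation time of order exactly $L^{5/2}$. The plan is to show that $\trel^\cS$ matches this exponent from both sides, separately.

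For the lower bound $\trel^\cS \geq L^{5/2 - o(1)}$ I would use the variational formula $\trel^\cS = \sup_f \var_\nu(f)/\cE_\nu(f,f)$, adapting to the sign chain the test function that established \eqref{5/2} in \cite[Section 6]{CMT1}. A convenient choice is a smooth antisymmetric function of the position of (say) the leftmost sign change. Under $\nu$ this position is distributed approximately according to $\rho$, producing a variance of order one once the function is appropriately normalized. For the Dirichlet form, each heat-bath flip can only move the leftmost sign change by a bounded amount, and does so only at a site whose $\nu$-probability of being a crossing is $O(\rho(x))$. Summing over $x$ and using $\sum_x \rho(x)^2 = O(L^{-5/2})$ (a computation of the type already carried out in \cite{CMT1}) gives $\cE_\nu(f,f) = O(L^{-5/2})$, and the claimed exponent follows.

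For the upper bound $\trel^\cS \leq L^{5/2+o(1)}$ the idea is a two-level decomposition. Writing $\mathbf{x}(\si)$ for the ordered sequence of sign changes of $\si$, decompose $\nu(\si) = \nu_1(\mathbf{x})\nu_2(\si\tc\mathbf{x})$; since between two consecutive sign changes the signs alternate and are determined up to a global flip, $\nu_2$ is essentially trivial and relaxation of the conditional chain is immediate. A Jerrum-Son-Tetali-Vigoda decomposition then reduces $\trel^\cS$ to the relaxation time of the induced chain on $\mathbf{x}$. One then shows that under $\nu_1$ the number $k$ of sign changes is stochastically dominated by a geometric variable with parameter bounded away from $1$, and that conditional on $k$ the positions $\mathbf{x}$ are approximately distributed as $k$ independent walkers with single-particle weight $\rho$, subject to exclusion. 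A comparison with the one-particle chain of Lemma \ref{52} then gives an $O(L^{5/2})$ bound on each $k$-sector, and the tail of $k$ under $\nu_1$ guarantees no loss in summing over sectors.

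The main technical obstacle lies in this induced dynamics on $\mathbf{x}$: proving that the creation and annihilation of sign changes (the ``pinching'' of a bubble of the polymer) do not create bottlenecks, and that the $k$-particle chain with exclusion relaxes on the same scale as the one-particle walk. The latter requires extending the single-particle estimate of Lemma \ref{52} to a many-particle system near the degenerate saddle at $x=0$; I would attack this through a canonical-paths comparison with a product chain, exploiting the fact that the stationary weight $\rho$ of each walker decays slowly from the attractors while the interaction is purely hard-core and hence benign. Handling these aspects carefully is what I expect to consume most of the technical work behind the theorem.
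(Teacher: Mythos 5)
For the lower bound, the test function you sketch has a flaw that the paper's choice deliberately sidesteps. A function of the position $\xi_1$ of the leftmost sign change is not Lipschitz with respect to single spin flips of $\si$: starting from $\si\equiv+1$ (an event of $\nu$-probability about $1/2$), a flip of $\si_x$ to $-1$ creates two crossings at $x\pm 1$ and sends $\xi_1$ from its ``all plus'' value near $+L$ to $x-1$, a jump of order $L$ whenever $|x|=O(L)$. The flip rate at such $x$, given $\si\equiv+1$, is $\theta_x\asymp\rho(x)$, and the increment of $g(\xi_1/L)$ is $\Theta(1)$; these moves alone contribute $\Theta\!\left(\sum_{|x|\le L/2}\rho(x)\right)=\Theta(L^{-1/2})$ to $\cD(f,f)$, far above the $O(L^{-5/2})$ needed. (There is also a separate issue: $\xi_1$ is invariant under $\si\mapsto -\si$, so $g(\xi_1/L)$ alone is symmetric and does not separate $\O^+$ from $\O^-$; you would have to multiply by $\si_{-L+1}$.) The paper instead takes $f(\si)=g(\z(\si)/L)$ with $\z(\si)=\sum_{x\in O_L}\ind(\si_x=+1)$ and $g$ piecewise linear, constant outside $[1/4,3/4]$. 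Since $\z$ changes by exactly one under any flip, each term of the Dirichlet form is $O(L^{-2})\theta_x(\si)$, and it vanishes unless $L/4\le\z\le 3L/4$, which forces a crossing in the middle of the segment; by \eqref{vars} that has $\nu$-probability $O(L^{-1/2})$, giving $\cD(f,f)=O(L^{-5/2}\log L)$ and hence $\trel^\cS=\O(L^{5/2}(\log L)^{-1})$.

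For the upper bound your plan --- decompose by the number of crossings, identify the positions with $n$ interacting walkers with single-particle weight $\r_+$, and compare with the one-particle bound of Lemma \ref{52} --- matches the paper's structure (Lemmas \ref{gapbar}, \ref{52} and \ref{neq}). The piece that still needs work is the estimate $\gap^{n,o}_{\rm eq}=\O(L^{-\e})$ uniformly in $n\le c_o\log L$, where you propose a canonical-paths comparison with a product chain. The paper proves this instead by a coupling argument (Lemma \ref{lemma:epsilon1}) resting on the heavy-tail property of $\r_+$ --- the conditional probability that a freshly updated particle lands next to its left neighbour is bounded below uniformly in the gap length --- and then a recursive block dynamics (Lemma \ref{lemma:gapbk}) to handle $n$ up to $O(\log L)$. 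A product-chain comparison via canonical paths is not obviously simpler here, because under $\nu_n$ the increments are conditioned to sum to $2L$, and this conditioning is precisely what a product comparison discards, whereas the paper's coupling exploits it. That step would have to be supplied before the upper bound is complete.
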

The proof of Theorem \ref{th1}
and Theorem \ref{th2} combines several different tools which play a prominent role in the analysis of convergence to equilibrium of 
Markov chains: decomposition methods, spectral gap analysis, comparison inequalities, and coupling estimates. 
An outline of the main steps of the proof is given at the beginning of Section \ref{cinquemezzi}.
%

\subsection{Metastability}
Recall the definition \eqref{oplusminus} of the two sets $\O^\pm$, and define the associated phases as the 
restricted equilibrium measures $\pi^\pm:=\pi(\cdot\tc\O^\pm)$, so that 
(cf.\ Section \ref{setup})
\begin{equation}
\label{mixture}
\Big\|\pi-\frac12(\pi^++\pi^-)\Big\| = o(1)\,.
\end{equation} 
In the thermodynamic limit, we expect relaxation to equilibrium within each phase 
to occur on time scales $\trel^\pm$ such that  $\trel\gg \trel^\pm$, 
while on a time scale proportional to $\trel$ one should 
see the system jump from one phase to the other according to i.i.d.\ exponentially distributed times. A strong indication of this metastable behavior comes from the following theorem. 
Below, we use $\eta(t)$ to denote the state at time $t$ 
of the Markov chain with generator \eqref{genero}. The notation $o(1)$ refers to asymptotics as $L\to\infty$.  
\begin{theorem}\label{th3}
 There exists a set $S^{+}\subset \O^{+}$ such that $\pi(S^{+})=1/2+o(1)$, and that uniformly in $\eta \in S^+$ and uniformly in $t>0$:
\begin{equation*}
 \bbP^{\eta}\left(\tau^{-}>t\right)=e^{-t/(2\trel)}\,+\,o(1)\,,
\end{equation*}
where
\begin{equation*}
 \tau^-=\inf\{t \geq 0:\; \eta(t)\in S^-\}\,,\qquad S^- = -S^+\,.
\end{equation*}
\end{theorem}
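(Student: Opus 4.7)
The plan is to use the quasi-stationary distribution (QSD) framework adapted to the two-well structure $\O^\pm$. Let $\cL^+$ denote the generator $\cL$ restricted to $\O^+$ with absorption as soon as the chain enters $S^-$, and let $(\mu^+,\lambda^+)$ be its Perron eigenpair: $\mu^+$ is the unique QSD and, by construction, $\bbP^{\mu^+}(\tau^->t)=e^{-\lambda^+ t}$ \emph{exactly} for every $t\ge 0$. By the $\eta\mapsto -\eta$ symmetry, the analogous pair on $\O^-$ satisfies $\lambda^-=\lambda^+$ and $\mu^-(\cdot)=\mu^+(-\cdot)$. The three steps below then reduce Theorem \ref{th3} to two quantitative ingredients: the identification of $\lambda^+$ and a fast-relaxation-to-$\mu^+$ property.

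\emph{Step 1 (spectral identification).} I would show
\begin{equation}\label{step1}
\lambda^+ \;=\; \frac{1}{2\trel}\bigl(1+o(1)\bigr).
\end{equation}
By reflection symmetry, the second eigenfunction $f_2$ of $-\cL$, with eigenvalue $1/\trel$, is antisymmetric. Using \eqref{mixture} together with a Rayleigh-quotient comparison against the antisymmetric indicator $g:=\ind_{\O^+}-\ind_{\O^-}$, one shows that $\|f_2-g/\|g\|_{\pi}\|_{L^2(\pi)}=o(1)$. The restriction of this approximate eigenfunction to $\O^+$ is a test function for the Dirichlet eigenvalue problem defining $\lambda^+$, yielding an upper bound $\lambda^+\le (1+o(1))/(2\trel)$; the matching lower bound follows from the variational formula for the spectral gap of $-\cL$ combined with the symmetric extension of an $\lambda^+$-eigenfunction of $\cL^+$ to $\O^-$ via $\eta\mapsto -\eta$. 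The factor $1/2$ is most transparent in the coarse-grained two-state picture: a symmetric chain on $\{+,-\}$ with jump rate $\lambda^+$ in each direction has spectral gap $2\lambda^+$, which should coincide with $1/\trel$.

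\emph{Step 2 (definition of $S^+$ and fast approach to $\mu^+$).} Pick a time scale $\theta$ satisfying $\theta\gg \trel^{\rm in}$ and $\theta/\trel=o(1)$, where $\trel^{\rm in}$ is the relaxation time of the process conditioned to stay inside $\O^+$. Set
\[
S^+ \;=\; \Bigl\{\eta\in\O^+\,:\ \|\bbP^\eta(\eta(\theta)\in\cdot\mid \tau^->\theta)-\mu^+\|_{TV}=o(1),\ \bbP^\eta(\tau^-\le\theta)=o(1)\Bigr\}.
\]
The second spectral gap of the killed generator (of order $1/\trel^{\rm in}\gg 1/\trel$) forces exponentially fast convergence to $\mu^+$ on the scale $\theta$, so that the set of $\eta\in\O^+$ failing either condition has $\pi$-measure $o(1)$; together with \eqref{mixture} this gives $\pi(S^+)=\tfrac12+o(1)$. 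A by-product of the same analysis is that the restricted measure $\pi^+=\pi(\cdot\mid\O^+)$ and the QSD $\mu^+$ are within $o(1)$ in total variation, since the killing effect on $\mu^+$ acts only on the slow scale $\trel$ while $\pi^+$ is already stationary for the in-phase dynamics.

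\emph{Step 3 (uniform exponentiality).} For $\eta\in S^+$ and $t\le\theta$, both $\bbP^\eta(\tau^->t)$ and $e^{-t/(2\trel)}$ equal $1+o(1)$ thanks to $\theta/\trel=o(1)$ and the second defining property of $S^+$. For $t>\theta$, the Markov property at time $\theta$ and Step 2 give
\begin{align*}
\bbP^\eta(\tau^->t)
&=\bbP^\eta(\tau^->\theta)\,\bbE^\eta\!\bigl[\bbP^{\eta(\theta)}(\tau^->t-\theta)\,\big|\,\tau^->\theta\bigr]\\
&=\bigl(1-o(1)\bigr)\Bigl(\bbP^{\mu^+}(\tau^->t-\theta)+o(1)\Bigr)\\
&=e^{-\lambda^+(t-\theta)}+o(1)\;=\;e^{-t/(2\trel)}+o(1),
\end{align*}
uniformly in $t$, where the last equality uses \eqref{step1} and $\theta/\trel=o(1)$.

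\emph{Main obstacle.} The delicate point is Step 1, combined with the quantitative scale separation $\trel^{\rm in}/\trel=o(1)$ needed in Step 2. Identifying $1/\trel$ as an isolated simple eigenvalue of $-\cL$ whose eigenfunction is essentially $\ind_{\O^+}-\ind_{\O^-}$, and controlling the in-phase relaxation time well enough to absorb the $o(1)$ errors \emph{uniformly in} $t$, is the main new ingredient beyond the mixing-time bounds of Theorems \ref{th1}--\ref{th2}; it presumably requires the decomposition and censoring arguments underlying those theorems, applied within a single phase rather than globally.
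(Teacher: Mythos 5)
Your overall strategy (QSD plus a fast-relaxation argument) is genuinely aligned with the paper's, but two of its key steps conceal real difficulties that the paper spends most of Section 3 overcoming, and one of them as stated is not just unproved but incorrect.

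First, the construction is circular: you define $\cL^+$ as the generator "with absorption as soon as the chain enters $S^-$", and $S^-=-S^+$, but $S^+$ is precisely the object being constructed. The paper breaks this loop by first introducing $S^{0,\pm}=\{\pm g>0\}$, where $g$ is the eigenfunction of Proposition \ref{mainvect}, and only then refining $S^{0,\pm}\supset S^{1,\pm}\supset S^{2,\pm}$ in two more rounds, each round using quantitative information from the previous one. Some version of this bootstrap seems unavoidable, because the value of the killing rate $\gamma_{S^-}$ genuinely depends on which $S^-\subset S^{0,-}$ one takes.

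Second, and more seriously, the "Rayleigh-quotient comparison" in your Step 1 does not give $\lambda^+\le(1+o(1))/(2\Tr)$. Plugging $g\ind_{(S^-)^c}$ into the Dirichlet form and proceeding as in the paper's Lemma \ref{wesh} yields $\gamma_{S^-}\le \gap$ (the $1/2$ in the numerator from $\cE$ is cancelled by the $1/2$ in the denominator from $\pi(g^2\ind_{(S^-)^c})\approx 1/2$). The complementary bound $\gamma_{S^-}\ge\pi(S^-)\gap$ gives $\gamma_{S^-}\ge(1/2+o(1))\gap$, so at this stage the rate is pinned only to within a factor $2$. Promoting this to $\gamma=(1/2+o(1))\gap$ is exactly what the paper's Lemma \ref{gamma1} is for: it compares the hitting-time tail of $S^{1,-}$, followed by the recurrence to $S^{0,+}$, against the explicit $\nu_t^\wedge((S^{0,+})^c)\le(1-e^{-t/\Tr})/2+o(1)$ supplied by Theorem \ref{th4}. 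The "two-state picture" you invoke is a heuristic for why the factor $1/2$ should appear, but it is not a proof, because the projection onto $\{\pm\}$ is not Markov; a genuine argument must track the full trajectory as in Lemma \ref{gamma1}. Finally, your Step 2 assumes that the second Dirichlet eigenvalue of the killed generator is of order $1/\Tr^{\rm in}$; the paper does not prove such a spectral statement (the killed state space $(S^-)^c$ contains the slow transition region, not just $\O^+$) and instead obtains the needed fast approach to the QSD through monotonicity (Lemma \ref{density}, Lemma \ref{wwesh}, the censoring bound in Proposition \ref{muretto}) rather than a spectral gap for $\cL^{S^-}$.
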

\begin{remark}
\rm From the proof of Theorem \ref{th3} it will be clear that the set
$S^+$ is {\sl increasing} w.r.t. the natural partial order among
polymer configurations defined in Section \ref{mono}, so that
in particular the maximal configuration (in the sequel denoted by
$\wedge$) is in $S^+$.
\end{remark}

Here $\bbP^{\eta}$ stands for the law of the process with initial state $\eta\in\O$. By symmetry, Theorem \ref{th3} also implies that 
uniformly in $\eta \in S^-=-S^+$ and uniformly in $t>0$, 
\begin{equation*}
 \bbP^{\eta}\left(\tau^{+}>t\right)=e^{-t/(2\trel)}\,+\,o(1)\,,
\end{equation*}
where $\tau^+=\inf\{t \geq 0:\; \eta(t)\in S^+\}$.

If we define the renormalized process 
$$\o_s = \ind\left(\eta(s\trel) \in\O^+\right)-\ind\left(\eta(s\trel) \in\O^-\right)\,,
$$
we expect that, starting from \emph{any} configuration in $\Omega^\pm$, $\{\o_s,\;s\geq0\}$ converges to the simple two-state Markov chain with 
switching rate (from $\pm 1$ to $\mp1$) equal to $1/2$, whose spectral gap equals $1$.    
Such a strong uniform result  seems very hard to obtain for our model.
The difficulty is that, in contrast with familiar metastability results \cite{metabook}, here there is no dramatic separation of time scales: while \eqref{5/2} and Proposition \ref{muretto} below imply $\trel\gg\trel^+$, the ratio $\trel/\trel^+$ is only polynomially large in $L$. 
However, we do have a detailed description of the
renormalized process when the initial condition is the maximal configuration. Namely, define the maximal element of $\O$ as $\eta_{\max}=\wedge$, i.e.\ 
$\wedge_x = x+L$ for $x\leq 0$ and $\wedge_x = L-x$ for $x\geq 0$, and let $
\tmix^{\wedge}(\e)$ denote the first time $t$ such that $\|P_t(\wedge,\cdot)-\pi\|\leq \e$.
\begin{theorem}\label{th4} 
For any $\d>0$, uniformly in $t\geq L^{2+\d}$
\begin{equation} \label{PTT}
\Big\| P_t(\wedge,\cdot)-\Big[\,\,\frac{1+e^{-t/\Tr}}{2}\,\pi^+ + \frac{1-e^{-t/\Tr}}{2} \,\pi^-\Big]\Big\|=o(1)
\end{equation}
and uniformly in $t\ge 0$
\begin{equation} \label{PTT2}
\Big\| \nu^{\pi^+}_t-\Big[\,\,\frac{1+e^{-t/\Tr}}{2}\,\pi^+ + \frac{1-e^{-t/\Tr}}{2} \,\pi^-\Big]\Big\|=o(1).
\end{equation}
Moreover, 
for any $\gep\in(0,1/2)$ one has
\begin{equation} \label{tmixhat}
\Tm^{\wedge}(\gep)= \Tr\log\left(\frac{1}{2\gep}\right)(1+o(1)).
\end{equation}
 \end{theorem}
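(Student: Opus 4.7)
The plan is to combine Theorem~\ref{th3} with a fast intra-phase relaxation statement (Proposition~\ref{muretto}) in order to reduce the slow mode of the dynamics to an effective symmetric two-state continuous-time Markov chain on $\{+,-\}$ with both jump rates equal to $1/(2\trel)$. For such a chain the probability of being in the starting phase at time $t$ equals $(1+e^{-t/\trel})/2$, exactly the coefficient appearing in \eqref{PTT} and \eqref{PTT2}, while the conditional law on the current phase equilibrates to $\pi^\pm$ on a much shorter time scale $L^{2+o(1)}$.

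To prove \eqref{PTT}, I would first iterate Theorem~\ref{th3}. Starting from $\wedge\in S^+$, Theorem~\ref{th3} gives that $\tau^-$ is asymptotically exponential with rate $1/(2\trel)$, uniformly over $S^+$; after entering $S^-$, fast intra-phase mixing in $\O^-$ brings the chain, in time $o(\trel)$, to a state from which the symmetric analogue of Theorem~\ref{th3} restarts the exponential clock for $\tau^+$. The coarse-grained process recording the current phase is thus, up to total-variation error $o(1)$, a two-state continuous-time Markov chain with rate $1/(2\trel)$. Second, for $t\geq L^{2+\delta}$ the in-phase relaxation time $\trel^\pm = L^{2+o(1)}$ is much smaller than $\trel$, hence the conditional law of $\eta(t)$ given $\eta(t)\in\O^\pm$ is close to $\pi^\pm$. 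Combining these two inputs yields \eqref{PTT}.

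For \eqref{PTT2} the initial condition $\pi^+$ requires no prior in-phase thermalization, which is why uniformity extends down to $t\geq 0$. For $t\leq L^{2+\delta}$ the tunneling probability is at most $t/(2\trel)+o(1)=o(1)$ while the target coefficient $(1-e^{-t/\trel})/2$ is also $o(1)$, so both sides lie within $o(1)$ of $\pi^+$; for $t\geq L^{2+\delta}$ the argument used for \eqref{PTT} applies. A cleaner way to unify the two ranges is to observe that $\pi^+$ is close in total variation to the quasi-stationary distribution of the chain killed upon entering $S^-$: this holds because the principal eigenvalue of the killed generator is $1/(2\trel)(1+o(1))$ whereas the gap to the next eigenvalue is $O(L^{-2-o(1)})$, so that starting from $\pi^+$ and conditioning on non-tunneling leaves the law close to $\pi^+$ for every $t\geq 0$.

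Finally, for \eqref{tmixhat}, combining \eqref{PTT} with \eqref{mixture} and the disjointness of $\O^+$ and $\O^-$ (which gives $\|\pi^+-\pi^-\|=1$) yields
\[
\|P_t(\wedge,\cdot)-\pi\| = \tfrac{1}{2}\,e^{-t/\trel} + o(1).
\]
Setting the right-hand side equal to $\gep$ gives $\Tm^{\wedge}(\gep)=\trel\log(1/(2\gep))(1+o(1))$; this value is of order $L^{5/2+o(1)}$ and comfortably exceeds $L^{2+\delta}$, legitimising the use of \eqref{PTT}. The principal obstacle throughout is the absence of any exponential separation between $\trel$ and $\trel^\pm$: both are polynomial in $L$. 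This rules out classical metastability machinery and forces a careful propagation of the $o(1)$ errors from Theorem~\ref{th3} across the many tunneling events that occur in a window of length $O(\trel)$, together with the delicate quasi-stationary comparison with $\pi^+$ needed to extend uniformity down to all $t\geq 0$ in \eqref{PTT2}.
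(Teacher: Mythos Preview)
Your proposal has a logical circularity in the context of the paper. You plan to deduce Theorem~\ref{th4} from Theorem~\ref{th3}, but the dependence runs the other way: Theorem~\ref{th4} is proved first (Section~3.1) and is then an essential input to Theorem~\ref{th3} (Section~3.2). Specifically, Lemma~\ref{gamma1}, which identifies the tunneling rate as exactly $(1/2+o(1))\gap$, invokes \eqref{PTT} to bound $\nu_t^\wedge((S^{0,+})^c)$. Without Theorem~\ref{th4}, the preceding estimate Lemma~\ref{wesh} gives only $\pi(S^-)\gap\leq \gamma\leq\gap$, i.e.\ the rate lies somewhere in $[\gap/2,\gap]$; your iteration would then produce coefficients $(1+e^{-2\gamma t})/2$ with an unknown $\gamma$ in that range, not the desired $(1+e^{-t/\trel})/2$. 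The extra claim you make about the spectral gap of the killed generator being $O(L^{-2-o(1)})$ is also unjustified and not established anywhere in the paper.

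The paper's argument is spectral rather than probabilistic and avoids Theorem~\ref{th3} entirely. The key input is Proposition~\ref{mainvect}: the principal eigenfunction $g$ (with $\cL g=-\gap\,g$) satisfies $\|g-(\ind_{\O^+}-\ind_{\O^-})\|_{\bbL_1(\pi)}=o(1)$. Hence $\dd\pi^+/\dd\pi=(2\pi(\O^+))^{-1}\ind_{\O^+}$ is close to $1+g$ in $\bbL_1(\pi)$, and since $P_t(1+g)=1+e^{-t/\trel}g$ \emph{exactly}, \eqref{PTT2} follows for all $t\geq 0$ with no error accumulation over tunneling events. Then \eqref{PTT} follows from \eqref{PTT2} and Lemma~\ref{lemma:peq} via $\|\nu_T^\wedge-\pi^+\|=o(1)$ for $T=L^{2+\delta}$. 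Your derivation of \eqref{tmixhat} from \eqref{PTT} matches the paper's.
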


\begin{remark}
\rm
Theorem \ref{th4} shows in particular that, when the dynamics is
started from either $\pi^+$ or $\wedge$, there is no {\em cut-off}
phenomenon \cite{LPW}, i.e. the variation distance from equilibrium
does not fall abruptly to zero, but rather does so smoothly (on the
timescale $T_{rel}$). That is another signature of the metastable
behavior of our system and it is in contrast with what one expects for
the neutral or attractive case $\lambda \ge 1$. 
 \end{remark}

One of the key features of metastability is that,
once the system decides to jump from e.g.\
$S^+$ to $S^-$, then it does so very quickly on the time scale of the mixing time.
 We
verify that indeed this is the case for most starting configurations
inside $S^+\cup S^-$.

Let $ \mathcal T$ denote the random time
spent outside $S^+\cup S^-$ up to the hitting time of $S^-$:
\begin{eqnarray}
  \label{eq:1}
  \mathcal T:=\int_0^{\tau^-} \ind_{\{\eta(s)\in (S^+\cup S^-)^c\}} \, \dd s.
\end{eqnarray}
From Theorems \ref{th3} and \ref{th4} one easily deduces that, for
most initial conditions in $S^+$, 
$\tau^-\gg \mathcal T$:
\begin{corollary}\label{corol}
  There exists a subset $\tilde S^+$ of the set $S^+$ of Theorem
  \ref{th3} satisfying $\pi(\tilde S^+)=1/2+o(1)$ such that, uniformly on $\eta\in \tilde S^+$,
 $\mathcal T=o(\tau^{-})$ in probability, i.e.\ there exists a sequence $\delta_L$ tending to zero as $L\to\infty$ 
such that for every $\eta\in
\tilde S^+$,
\begin{equation}
  \label{eq:2}
  \bbP^{\eta} \left[\mathcal T\ge \gd_L \tau^-\right]\le \gd_L.
\end{equation}

\end{corollary}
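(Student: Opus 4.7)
The plan is to combine the uniform closeness of $\nu^{\pi^+}_t$ to the mixture $\tfrac{1+e^{-t/\Tr}}{2}\pi^++\tfrac{1-e^{-t/\Tr}}{2}\pi^-$ provided by \eqref{PTT2} with the tail estimate for $\tau^-$ from Theorem \ref{th3}. A preliminary observation is that, since $\pi(S^\pm)=1/2+o(1)$, $\pi(\O^\pm)=1/2+o(1)$ (the latter follows from \eqref{mixture}), and $S^\pm\subset\O^\pm$, we have $\pi^\pm((S^+\cup S^-)^c)=o(1)$. Inserting this into \eqref{PTT2} yields a sequence $\eta_L\to 0$ with
\[
\nu^{\pi^+}_s\bigl((S^+\cup S^-)^c\bigr)\le \eta_L,\qquad \forall\,s\ge 0,
\]
and, by Fubini, $\bbE^{\pi^+}\bigl[\int_0^T\ind_{\{\eta(s)\in(S^+\cup S^-)^c\}}\,\dd s\bigr]\le \eta_L T$ for every $T>0$.

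Next, fix $T:=N_L\Tr$ with $N_L:=\log(1/\eta_L)\to\infty$, and set $\gd_L:=\eta_L^{1/4}\to 0$. Decompose
\[
\{\mathcal T>\gd_L\tau^-\}\,\subset\,\{\tau^-<\gd_L\Tr\}\cup\{\tau^->T\}\cup\{\mathcal T>\gd_L^2\Tr,\ \tau^-\le T\}.
\]
Theorem \ref{th3}, averaged over $\pi^+$ (legitimate since $\pi^+(S^+)=1-o(1)$), bounds $\bbP^{\pi^+}(\tau^-<\gd_L\Tr)\le \gd_L/2+o(1)$ and $\bbP^{\pi^+}(\tau^->T)\le e^{-N_L/2}+o(1)=\eta_L^{1/2}+o(1)$. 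For the last event, $\mathcal T\le \int_0^T\ind_{(S^+\cup S^-)^c}$ on $\{\tau^-\le T\}$, so Markov's inequality combined with the expectation bound above yields a bound $\eta_L T/(\gd_L^2\Tr)=\eta_L^{1/2}\log(1/\eta_L)\to 0$. Adding the three contributions gives $\alpha_L:=\bbP^{\pi^+}(\mathcal T>\gd_L\tau^-)=o(1)$.

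To upgrade this $\pi^+$-averaged bound to the pointwise statement \eqref{eq:2}, I apply a second Markov inequality in the starting point: the set
\[
B_L:=\{\eta\in S^+\,:\;\bbP^\eta(\mathcal T>\gd_L\tau^-)\ge\sqrt{\alpha_L}\}
\]
has $\pi^+(B_L)\le \sqrt{\alpha_L}\to 0$, hence $\pi(B_L)\le \sqrt{\alpha_L}\,\pi(\O^+)=o(1)$, and $\tilde S^+:=S^+\setminus B_L$ satisfies $\pi(\tilde S^+)=1/2+o(1)$. For every $\eta\in \tilde S^+$ one has $\bbP^\eta(\mathcal T\ge \gd_L'\tau^-)\le \gd_L'$ with $\gd_L':=\max(\gd_L,\sqrt{\alpha_L})\to 0$, which is exactly \eqref{eq:2}.

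The (mild) obstacle is that the additive $o(1)$ error in Theorem \ref{th3} does not decay in $t$, so one cannot integrate the survival function of $\tau^-$ on $[0,\infty)$ to directly control $\bbE[\mathcal T\,\ind_{\tau^->T}]$. The two-stage truncation at the deterministic horizon $T=N_L\Tr$ and the matched choice of exponents for $\gd_L$ and $N_L$ above are what circumvent this difficulty, by balancing the contributions from mixing ($\sim\eta_L^{1/2}\log(1/\eta_L)$), survival at $T$ ($\sim\eta_L^{1/2}$), and survival below $\gd_L\Tr$ ($\sim\gd_L$) so that they vanish simultaneously.
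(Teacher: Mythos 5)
Your proposal is correct and follows essentially the paper's own strategy: bound the time spent in $(S^+\cup S^-)^c$ via \eqref{PTT2}, truncate at a deterministic horizon, control the tail of $\tau^-$ via Theorem \ref{th3}, and apply Markov's inequality twice (once along the trajectory, once in the starting point) to carve out a subset $\tilde S^+$ of almost-full measure on which the bound holds pointwise. The only cosmetic difference is the order of the two Markov steps: the paper first restricts the starting point based on $\bbE^\eta[H_{\delta_L^{-1/2}\Tr}]$ and then argues pointwise, whereas you first establish a $\pi^+$-averaged bound on $\bbP(\mathcal T>\delta_L\tau^-)$ and then restrict the starting point based on that conditional probability.
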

Along the same lines of the proof of the Corollary, one can
establish the weak convergence of the renormalized process $\omega_s$
to the two-state Markov chain, provided that the initial configuration
is inside a suitable subset of ${\tilde S}^+\cup {\tilde S}^-$ with
almost full measure. We decided to omit details for shortness.

\subsection{Organization of the paper}
The rest of the paper consists of three sections. Section \ref{setup} starts with standard material 
and then proceeds with the introduction of some essential tools to be
used in the proof of the main results, including general results for
monotone systems that can be of independent interest. This section
contains also some new results concerning the relaxation within one
phase and the properties of the principal eigenfunction of the
generator. 
 The metastability results are discussed in Section \ref{metastab}. Here, we start with  the proof of Theorem~\ref{th4}. In later subsections we develop 
the construction 
needed for the proof of Theorem \ref{th3}. 
Finally, Section \ref{cinquemezzi} proves Theorem \ref{th1} and Theorem \ref{th2}. This section is  broken into several subsections corresponding to the various steps of the proof. A high level description of the arguments involved is given at the beginning of the section.
Finally,

\bigskip

\noindent
{\bf Notational conventions}. Whenever we write $o(L^{p})$ or $O(L^p)$ for some $p\in\bbR$ it is understood that this refers to the thermodynamic limit $L\to\infty$. Also, we use the notation $f(L)=\O(L^{p})$ when there exists a constant $c>0$ such that $f(L)\geq c\,L^p$ for all sufficiently large $L$. 
For positive functions $f,g$, we use the notation $f(L)\gg g(L)$ whenever 
$\liminf_{L\to\infty} f(L)/g(L)=+\infty$, and $f(L)\sim g(L)$ when $\lim_{L\to\infty} f(L)/g(L)=1$. 
Also, we write $f\asymp g$ if there exists 
some constant $c>0$ such that $c^{-1}g\leq f\leq c\,g$. 

\section{Some tools}\label{setup}
We begin with some generalities about reversible Markov chains. Then, we recall the definition of the polymer dynamics and derive some consequences of monotonicity. Next, we give some estimates
on convergence to equilibrium in the ``plus'' phase. Finally, we characterize in detail 
an eigenfunction of $\cL$
with eigenvalue $-\gap$.

\subsection{Preliminaries}\label{preli}
We will consider reversible continuous time Markov chains with finite state space $X$, defined by the infinitesimal generator
$\cL$ acting on functions $f:X\mapsto\bbR$, 
\begin{equation}\label{genergen}
[\cL f](x) = \sum_{y\in X}c(x,y)[f(y)-f(x)]\,,
\end{equation}
where $c(\cdot,\cdot)$ is a bounded non negative function on $X\times X$ satisfying 
$\pi(x)c(x,y) = \pi(y)c(y,x)$, for a probability measure $\pi$ on $X$. 
In the applications below, the rates $c(x,y)$ will always be such that the Markov chain is irreducible and the reversible invariant measure $\pi$ is positive on $X$. We refer e.g.\ to \cite{AF,LPW} for more details on reversible Markov chains. 

Let $\nu_t^x=\bbP(v_t^x\in\cdot)$ denote the law of the state $v_t^x$ of the Markov chain at time $t$ with initial condition $x\in X$. We shall investigate the rate of convergence of $\nu_t^x$ to $\pi$. If the initial condition $x$ is distributed according to a probability  $\mu$ on $X$, we write $\nu_t^\mu=\sum_{x\in X}\mu(x)\nu_t^{x}$ for the distribution at time $t$. As usual, one can associate a semi-group $\{P_t,\,t\geq 0\}$  to the generator $\cL$ in such a way that 
$[P_tf](x)=[e^{t\cL}f](x)=\sum_{y\in X}\nu_t^x(y)f(y)$. We also use the notation 
$P_t(x,y)=\nu_t^x(y)$, and $\nu_t^\mu = \mu P_t$.

The mixing time of the Markov chain is defined by 
\begin{equation}\label{tmix}
\tmix(\e)=\inf\Big\{t>0\,,\;\,\max_{x\in X}\|\nu_t^{x} - \pi\|\leq \e\Big\}\,,
\end{equation}
where $$\| \mu-\nu\|= \frac12\sum_{x}|\mu(x)-\nu(x)|\,$$ is the total variation distance. We shall use the convention that $\tmix$ stands for $\tmix(\frac1{2e})$. It is well known that with this notation one has 
\begin{equation}\label{tmix2}
\|\nu_t^{x} - \pi\|\leq e^{-\integ{t/\tmix}}\,,
\end{equation}
for all $t\geq 0$, where $\integ{a}$ denotes the integer part of $a\geq 0$. The spectral gap and the relaxation time of the process are defined by
\begin{equation}\label{trel}
\gap=\min_{f:X\mapsto\bbR}\,\frac{\cE(f,f)}{\var_\pi(f)}\,,
\quad\;\; \trel = \frac1\gap\,,
\end{equation}
where for $f:X\mapsto\bbR$,
\begin{equation}\label{dirich}
\cE(f,f) = \sum_{x\in X}\pi(x) f(x) [-\cL f](x) = \frac12\sum_{x,y\in X}\pi(x)c(x,y)[f(y)-f(x)]^2\,
\end{equation}
is the quadratic form of the generator, a.k.a.\ the Dirichlet form, while $\var_\pi(f)$ stands for the 
variance $\pi(f^2)-\pi(f)^2$. Thus, $\gap$ is the lowest non zero eigenvalue of $-\cL$. 
The following bound relating total variation distance and relaxation time is an immediate consequence of reversibility and Schwarz' inequality:
\begin{equation}\label{trelvar}
\|\nu_t^{\mu} - \pi\|\leq \frac12 \,e^{-t/\trel}\,\sqrt{\var(f)}\,,
\end{equation}
where $f(\sigma)=\mu(\sigma)/\pi(\sigma)$ and $\mu$ is a probability on $X$. 
Another standard relation between total variation and relaxation time is the identity
\begin{equation}\label{clasgap}
 \gap=-\lim_{t\to\infty} \frac{1}{t}\log \max_{x,y} \| \nu_t^{x} -\nu_t^{y}\|.
\end{equation}
Combining \eqref{tmix2}, \eqref{clasgap}
and \eqref{trelvar},  one can obtain the following well known relations: 
\begin{equation}\label{trelmix}
\trel\leq \tmix \leq \big(1-\log{\pi_*}\big) \trel\,,\quad\;\text{where $\pi_* =\min_{x\in X}\pi(x)$}\,.
\end{equation}
 

\subsection{A general decomposition bound on the spectral gap}
We shall need a 
continuous time version of a general decomposition bound obtained by Jerrum et al.\
 \cite{Jerrum}. 
Consider the continuous time reversible Markov chain defined by \eqref{genergen}.
Suppose the space $X$ is partitioned in the disjoint union of subspaces $X_1,\dots,X_m$, for some $m\in\bbN$ and define the generators
$$
[\cL_i f](x) =  \sum_{y\in X}c_i(x,y)[f(y)-f(x)]\,,\quad \;c_i(x,y)=c(x,y)\ind(y\in X_i)\,,\quad x\in X_i\,.
$$
Then $\cL_i$ is the generator of the Markov chain restricted to $X_i$, its reversible invariant measure 
being given by $\pi_i = \pi(\cdot\tc X_i)$. Let $\l_{{\rm min}}$ denote the minimum of the spectral gaps of the Markov chains generated by $\cL_i$, $i=1,\dots,m$. 
Next, let $\wb\cL$ denote the infinitesimal generator defined by
$$
[\wb\cL \varphi](i) = \sum_{j=1}^m\bar c(i,j)[\varphi(j)-\varphi(i)]\,,
$$
for $\varphi\in\bbR^m$, where $$
\bar c(i,j)=\sum_{x\in X_i\,,\; y\in X_j}\pi(x\tc X_i)\,c(x,y)\,.$$  
This defines a continuous time Markov chain on $\{1,\dots,m\}$ with reversible invariant measure
$\bar \pi(i) = \pi(X_i)$. Let $\bar\l$ denote the gap of this chain. 
A straightforward adaptation of~\cite[Theorem 1]{Jerrum} yields the following estimate.  
\begin{proposition}\label{jerr}
Define $\g=\max_{i}\max_{x\in X_i}
\sum_{y\in X\setminus X_i}c(x,y)\,.$ Then, with the notation of \eqref{trel},
\begin{equation}\label{jerr11}
\gap\geq
\min\Big\{\frac{\bar\l}3,\frac{\bar \l\,\l_{{\rm min}}}{\bar\l + 3 \g}\Big\}\,.
\end{equation}
\end{proposition}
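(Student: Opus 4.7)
The plan is to establish a Poincar\'e inequality of the form $\var_\pi(f)\leq C\,\cE(f,f)$ valid for every $f:X\to\bbR$, with $1/C$ matching the right-hand side of \eqref{jerr11}, and then read off the spectral gap bound from the variational characterization \eqref{trel}. The argument is the continuous-time translation of the Jerrum--Son--Tetali--Vigoda decomposition; only minor bookkeeping changes are needed to account for the factor $1/2$ appearing in the continuous-time Dirichlet form.

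Fix $f:X\to\bbR$. Conditioning on the block containing a $\pi$-distributed point, the law of total variance gives
\begin{equation*}
\var_\pi(f)=A+\var_{\bar\pi}(\bar f),\qquad A:=\sum_{i=1}^m\bar\pi(i)\var_{\pi_i}(f),\qquad \bar f(i):=\pi_i(f).
\end{equation*}
The Poincar\'e inequality applied to $\cL_i$ on each block yields $A\leq \l_{{\rm min}}^{-1}\cE_{\mathrm{in}}(f,f)\leq \l_{{\rm min}}^{-1}\cE(f,f)$, where $\cE_{\mathrm{in}}$ counts only jumps within a single block. Similarly, the Poincar\'e inequality for $\wb\cL$ gives $\var_{\bar\pi}(\bar f)\leq \bar\l^{-1}\bar\cE(\bar f,\bar f)$.

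The heart of the argument is to compare $\bar\cE(\bar f,\bar f)$ with $\cE(f,f)$. Unfolding the definition of $\bar c(i,j)$ and using reversibility,
\begin{equation*}
\bar\cE(\bar f,\bar f)=\tfrac12\sum_{i\neq j}\sum_{x\in X_i,\,y\in X_j}\pi(x)c(x,y)\bigl[\bar f(j)-\bar f(i)\bigr]^2.
\end{equation*}
Decomposing $\bar f(j)-\bar f(i)=[\pi_j(f)-f(y)]+[f(y)-f(x)]+[f(x)-\pi_i(f)]$ and applying $(a+b+c)^2\leq 3(a^2+b^2+c^2)$, the middle term contributes at most $3\cE(f,f)$ (dropping the restriction $i\neq j$ only enlarges the sum, which then equals twice the full Dirichlet form), while the two endpoint terms together contribute at most $3\g A$: for fixed $x\in X_i$ the bound $\sum_{y\notin X_i}c(x,y)\leq \g$ holds by definition of $\g$, and reversibility handles the other endpoint symmetrically. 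Combining,
\begin{equation*}
\var_\pi(f)\leq A\Bigl(1+\frac{3\g}{\bar\l}\Bigr)+\frac{3}{\bar\l}\cE(f,f).
\end{equation*}

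Inserting $A\leq \l_{{\rm min}}^{-1}\cE(f,f)$ and splitting into the two regimes $3\l_{{\rm min}}\leq \bar\l+3\g$ and $3\l_{{\rm min}}>\bar\l+3\g$ produces the minimum in \eqref{jerr11}: in the first regime the $A$-term dominates and yields $\gap\geq \bar\l\l_{{\rm min}}/(\bar\l+3\g)$, while in the second the $\cE(f,f)$-term dominates and yields $\gap\geq \bar\l/3$. The main technical obstacle will be matching the constants in \eqref{jerr11} exactly: the uniform inequality $(a+b+c)^2\leq 3(a^2+b^2+c^2)$ loses a factor of two when the two summands are balanced, so to recover the sharp min form one must use a weighted $(a+b+c)^2\leq \alpha a^2+\beta b^2+\gamma c^2$ with $1/\alpha+1/\beta+1/\gamma=1$, picking $(\alpha,\beta,\gamma)$ in a regime-dependent way so that the already-small summand is the one allowed to inflate.
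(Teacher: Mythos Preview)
Your approach is correct and is exactly the Jerrum--Son--Tetali--Vigoda argument that the paper invokes (the paper gives no proof, simply citing \cite[Theorem~1]{Jerrum} and calling the continuous-time version a ``straightforward adaptation''). There is, however, a small inefficiency in your write-up that creates the ``technical obstacle'' you flag at the end, and the fix is simpler than the weighted Cauchy--Schwarz you propose.

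The point is that you should \emph{not} drop the restriction $i\neq j$ in the middle term. Keeping it, that term contributes exactly $3\,\cE_{\mathrm{out}}(f,f)$, where $\cE_{\mathrm{out}}$ counts only jumps between distinct blocks, so that $\cE=\cE_{\mathrm{in}}+\cE_{\mathrm{out}}$. Together with your own bound $A\leq\l_{{\rm min}}^{-1}\cE_{\mathrm{in}}(f,f)$ (which you already stated with $\cE_{\mathrm{in}}$ rather than $\cE$), one arrives at
\[
\var_\pi(f)\;\leq\; A\,\frac{\bar\l+3\g}{\bar\l}\;+\;\frac{3}{\bar\l}\,\cE_{\mathrm{out}}(f,f),
\qquad \l_{{\rm min}}\,A\;\leq\;\cE_{\mathrm{in}}(f,f).
\]
Now the case split is clean with the exact constants. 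If $\bar\l+3\g\leq 3\l_{{\rm min}}$, then $A\,\frac{\bar\l+3\g}{\bar\l}\leq \frac{3}{\bar\l}\,\l_{{\rm min}}A\leq\frac{3}{\bar\l}\,\cE_{\mathrm{in}}$, whence $\var_\pi(f)\leq\frac{3}{\bar\l}(\cE_{\mathrm{in}}+\cE_{\mathrm{out}})=\frac{3}{\bar\l}\,\cE$ and $\gap\geq\bar\l/3$. If instead $\bar\l+3\g>3\l_{{\rm min}}$, then $\frac{3}{\bar\l}<\frac{\bar\l+3\g}{\bar\l\,\l_{{\rm min}}}$, so the $\cE_{\mathrm{out}}$ term is bounded by $\frac{\bar\l+3\g}{\bar\l\,\l_{{\rm min}}}\,\cE_{\mathrm{out}}$; combining with $A\leq\l_{{\rm min}}^{-1}\cE_{\mathrm{in}}$ yields $\var_\pi(f)\leq\frac{\bar\l+3\g}{\bar\l\,\l_{{\rm min}}}\,\cE$ and $\gap\geq\frac{\bar\l\,\l_{{\rm min}}}{\bar\l+3\g}$. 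No regime-dependent weighted inequality is needed; the separation of $\cE$ into its in-block and out-block pieces is what makes the minimum come out exactly.
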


\subsection{Killed process and quasi-stationary distribution}\label{killing}
Here we recall some standard facts about killed processes, their generators and quasi-stationary distributions for reversible Markov chains; 
we refer to \cite{AF} for an introduction. 
Given a reversible Markov chain with generator $\cL$ as above, and a subset $\G\subset X$, 
we consider the process killed upon entering $\G$, with sub-probability law defined by
\begin{equation}\label{killedp}
\nu_t^{x,\G}(B) = \bbP^x(v_t\in B\,;\;\t_\G>t)\,,\quad\;x\in \G^c\,,
\end{equation}
where $B\subset X$, $v_t$ denotes the state of the Markov chain with generator $\cL$ at time $t$, $\bbP^x$ denotes the law of the process started at $x$, and
$\t_\G$ denotes the hitting time of the set $\G$. The associated semi-group $P_t^\G$ is given by
\begin{equation}\label{killedp1}
[P_t^\G f](x)=[e^{t\cL^\G}f](x)=\sum_{y\in \G^c}\nu_t^{x,\G}(y)f(y)\,,\quad\;x\in \G^c\,,
\end{equation}
where the killed generator $\cL^\G$ satisfies, for every $x\in \G^c$:
\begin{equation}\label{killedp2}
[\cL^\G f](x) = [\cL (f1_{\G^c})] (x) = [\cL f](x) - \sum_{y\in \G} c(x,y) f(y)\,.
\end{equation}
We assume that $P_t^\G$ is irreducible. 
Then $\cL^\G$ is a negative definite,
self-adjoint operator in $\bbL_2(\pi)$, and its top eigenvalue $-\g_\G$ 
is characterized by
\begin{equation}\label{killedp3}
\g_\G= 
\mintwo{f:X\mapsto\bbR,}{f 1_\G=0} \frac{\ll -\mathcal L^{\G}f,f  \rr_{\pi}}{\pi(f^2)}=\mintwo{f:X\mapsto\bbR,}{f1_\G=0} \frac{\cE(f,f)}{\pi(f^2)}\,,
\end{equation}
where we use $\ll\cdot,\cdot\rr_{\pi}$ for the scalar product in $  \bbL_2(\pi)$, and $\cE(f,f)$ is defined by \eqref{dirich}. 

Let $g_\G$ denote the (unique, positive on $\G^c$) eigenfunction of $\cL^\G$ 
associated to $-\g_\G$. Extending $g_\G$ to all $x\in X$ by setting $g_\G(x)=0$ for $x\in \G$, one defines the {\em quasi-stationary} distribution $\nu^\G$, i.e.\
 the probability  on $X$ given by 
\begin{equation}\label{killedp4}
\nu_\G(y) = \frac{\pi(y) g_\G(y)}{\pi(g_\G)}\,,\quad y\in X\,. 
\end{equation}
An equivalent characterization of $\nu_\G$ is as the limit
\begin{equation}\label{killedp5}
\nu_\G(B) = \lim_{t\to\infty} \bbP^x(v_t\in B\tc\t_\G>t)\,,
\end{equation}
where $B\subset X$, and the chosen initial point $x\in\G^c$ is arbitrary. The fundamental property of the 
quasi-stationary distribution is that, starting from $\nu_\G$, the hitting time $\t_\G$ is exponentially distributed with parameter $\g_\G$:
 \begin{equation}\label{killedp6}
\bbP^{\nu_\G}(\t_\G>t) = e^{-\g_\G\,t}\,,
\end{equation}
where $\bbP^{\nu_\G}$ stands for the law of the of the process when the initial state is distributed according to $\nu_\G$. Another way of expressing quasi-stationarity is $\nu^\G P^\G_t=e^{-\g_\G\,t}\nu^\G$, for all $t\geq 0$. 

A general property of $\gamma_\Gamma$ (cf. Lemma  \ref{wesh} below) is
that $\gamma_\Gamma\ge \gap \,\pi(\Gamma)$.

\subsection{Polymer model}\label{polymers}
Let $\O=\O_{2L}$ stand for the space of all lattice paths defined in the introduction. 
A partial order in $\O$ is given by 
\begin{equation}\label{order}
\eta\leq \eta' \iff \eta_x\leq \eta'_x\,,\quad x=-L,\dots,L\,.
\end{equation}
Given $\z,\xi\in\O$ such that $\z\leq \xi$ we define the restricted space
$\O^{\z,\xi}$ of all paths $\eta\in\O$ such that 
$\z\leq \eta\leq \xi$. The dynamics is defined by the continuous time Markov chain with state space $\O^{\z,\xi}$, 
with infinitesimal generator $\cL^{\z,\xi}$ given by \eqref{genero} where the rates $r_{x,\pm}(\eta)$
are replaced by 
\begin{equation}\label{generozx}
r_{x,\pm}^{\z,\xi}(\eta) = r_{x,\pm}(\eta) \ind( \eta^{x,\pm}\in\O^{\z,\xi})
\end{equation}
This process is the heat bath dynamics associated to the probability measure $\pi^{\l,\z,\xi}_{2L}$ on $\O^{\z,\xi}$
defined as in \eqref{equ} with the normalization now given by 
\begin{equation}\label{partiz}
Z_{2L}^{\l,\z,\xi} = \sum_{\eta'\in \O^{\z,\xi}}
 \l^{N(\eta')}\,.
\end{equation}
Equivalently, $\pi^{\l,\z,\xi}_{2L}=\pi_{2L}^\l(\cdot\tc\O^{\z,\xi})$. This is referred to as the polymer model with top/bottom constraints ($\z$ is the bottom, $\xi$ is the top). For simplicity, when no confusion arises, we often omit the superscripts $\l,\z,\xi$ and the subscript $L$ from our notation in what follows. We write $v^{\eta}_t$ for the state of the Markov chain at time $t$ when the initial configuration is 
some $\eta$, and let $\nu_t^{\eta}$ denote its distribution. When the initial condition $\eta$ 
is distributed according to a probability measure $\mu$ on $\O$ we write $\nu_t^\mu$ as in Section \ref{preli}.

Note that the generator  $\cL^{\z,\xi}$ can be written in the form \eqref{genergen} by setting 
$c(\eta,\eta') = r^{\z,\xi}_{x,\pm}(\eta)\ind(\eta'=\eta^{x,\pm})$, and $\pi=\pi^{\l,\z,\xi}_{2L}$ is reversible. While this holds for every value $\l>0$ of the parameter describing the strength of the interaction, we will only consider the case $\l<1$ below, which corresponds to a strictly delocalized regime for the polymer. 

The minimal path $\vee$ and maximal path $\wedge$ for the order \eqref{order} are defined by 
$\vee_x = -x-L$ for $x\leq 0$, $\vee_x = -L+x$ for $x\geq 0$, 
and $\wedge=-\vee$.
Clearly, if $\z=\vee$ and $\xi=\wedge$, then $\O^{\z,\xi}=\O$. This case is referred to as 
the polymer model with no top/bottom constraint.

The following well known
estimates will be often used in our proofs. We refer e.g.\ to \cite[Section 2]{cf:GB} 
for the proof of Lemma \ref{equo} below, as well as for other known properties of the delocalized equilibrium measure. Let $Z_{2L}=Z_{2L}^\l$ denote the partition function \eqref{partiz} with no top/bottom boundaries 
and write $Z_{2L}^+=Z_{2L}^{+,\l}$ for the partition function  \eqref{partiz} with $\xi=\wedge$ and $\z$ given by the minimal 
non negative element of $\O$, i.e.\ $\z_x=0$ if $x\in E_L$ ($x$ has the same parity as $L$) 
and $\z_x=1$ if $x\in O_L$ ($x$ has opposite parity w.r.t. $L$), i.e.\ $Z^+_{2L}$ is 
the partition function of the polymer with a horizontal wall at height zero. Recall that $N=N(\eta)$ stands for the number of zeros
in the path $\eta$ lying strictly between $-L$ and $L$.  Considering reflections of the path between consecutive zeros one obtains 
\begin{equation}\label{Z++}
2Z_{2L}^{+,\l} = Z_{2L}^{\l/2}\,.
\end{equation}

\begin{lemma}\label{equo}
Consider the polymer with no top/bottom constraint with $\l\in(0,1)$.
There exist constants $c_i=c_i(\l)>0$, $i=1,2$ such that
\begin{equation}\label{3/2}
2^{-2L}Z_{2L}^\l\sim c_1\,L^{-3/2}\,,
\end{equation}
and
\begin{equation}\label{expdec}
\pi(N(\eta) > k) \leq c_2\,e^{-k/c_2}\,,\quad \forall\ k\in\bbN\,.
\end{equation}
\end{lemma}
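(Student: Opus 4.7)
The plan is to treat both statements through the standard renewal decomposition of the polymer around its return points to the defect line. Write any path $\eta\in\O_{2L}$ with $N(\eta)=k$ interior zeros as a concatenation of $k+1$ signed excursions whose lengths $2n_1,\dots,2n_{k+1}$ sum to $2L$. Since each excursion can be independently placed above or below the line, and the SRW bridge has weight $2^{-2L}$ per path, one gets the basic identity
\begin{equation*}
2^{-2L} Z_{2L}^{\l} \;=\; \sum_{k\ge 0}\l^{k}\,P\bigl(\tau_1+\cdots+\tau_{k+1}=2L\bigr),
\end{equation*}
where the $\tau_i$ are i.i.d.\ with the first-return law $K(2n)=\frac{1}{2n-1}\binom{2n}{n}2^{-2n}$ of simple random walk. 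Stirling then gives $K(2n)\sim \frac{1}{2\sqrt\pi}\,n^{-3/2}$, which is the one analytical input I would actually verify carefully.

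For (\ref{3/2}) I would pass to generating functions. Writing $\hat K(z)=\sum_{n\ge 1}K(2n)z^n$, summing a geometric series yields
\begin{equation*}
F(z)\;:=\;\sum_{L\ge 1}\bigl(2^{-2L}Z_{2L}^{\l}\bigr)z^{L}\;=\;\frac{\hat K(z)}{1-\l\hat K(z)}.
\end{equation*}
The $n^{-3/2}$ tail of $K$ gives the singular expansion $\hat K(z)=1-c_0\sqrt{1-z}+O(1-z)$ as $z\uparrow 1$, and since $\l<1$ the denominator stays bounded away from $0$ on $[0,1]$. A direct expansion then produces
\begin{equation*}
F(z)\;=\;\frac{1}{1-\l}\;-\;\frac{c_0}{(1-\l)^{2}}\sqrt{1-z}\;+\;O(1-z).
\end{equation*}
A standard Tauberian/Darboux transfer (the square-root singularity gives coefficients of order $L^{-3/2}$) then yields $2^{-2L}Z_{2L}^{\l}\sim c_1(\l)\,L^{-3/2}$ with $c_1(\l)=c_0/[2\sqrt\pi\,(1-\l)^{2}]>0$. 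The main technical point is justifying the Tauberian step, which works because $\hat K$ extends analytically to a neighborhood of $1$ minus the cut $[1,\infty)$ (and likewise for $F$ since $1-\l\hat K$ has no zero there).

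For (\ref{expdec}) the trick is a simple change of parameter. Fix any $\mu$ with $\l<\mu<1$ (e.g.\ $\mu=(1+\l)/2$) and write
\begin{equation*}
\pi_{2L}^{\l}(N\ge k)\;=\;\frac{1}{Z_{2L}^{\l}}\sum_{j\ge k}\l^{j}A_{j}(L)\;\le\;\Bigl(\frac{\l}{\mu}\Bigr)^{k}\frac{Z_{2L}^{\mu}}{Z_{2L}^{\l}},
\end{equation*}
where $A_j(L)$ is the number of paths with exactly $j$ interior zeros. By (\ref{3/2}) applied to both $\l$ and $\mu$ the ratio $Z_{2L}^{\mu}/Z_{2L}^{\l}$ converges to $c_1(\mu)/c_1(\l)$ and is in particular bounded uniformly in $L$, so
\begin{equation*}
\pi_{2L}^{\l}(N\ge k)\;\le\;C(\l,\mu)\,(\l/\mu)^{k}\;\le\;c_{2}\,e^{-k/c_{2}}
\end{equation*}
for a suitable $c_2=c_2(\l)$.

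The only delicate step is the asymptotic analysis of $F(z)$; the renewal decomposition and the comparison argument for (\ref{expdec}) are routine once the first-return asymptotics $K(2n)\sim \frac{1}{2\sqrt\pi}n^{-3/2}$ and the singularity of $F$ at $z=1$ are in hand.
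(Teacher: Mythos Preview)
The paper does not prove this lemma at all; it states it as a well-known estimate and refers to \cite[Section 2]{cf:GB} (Giacomin's book). Your renewal decomposition plus singularity-analysis argument is precisely the standard route taken there, and the sketch is correct (in fact for SRW one has $\hat K(z)=1-\sqrt{1-z}$ exactly, so $c_0=1$ and the transfer theorem applies cleanly).
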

An immediate implication of \eqref{Z++} and \eqref{3/2} is that
\begin{equation}\label{3/2+}
2^{-2L}Z_{2L}^{+,\l}\sim c_+\,L^{-3/2}\,,
\end{equation}
for some constant $c_+>0$ as soon as $\l<2$. Moreover, \eqref{3/2} and \eqref{3/2+} imply the bounds 
\begin{align}\label{32}
\pi\left(\eta_y\geq 0 \;\;\forall  y\in\{-L,\dots,x\}\,,\;\text{and}\;\; \eta_x=0\right)& \,\asymp\, \pi\left(\eta_x=0\right)\nonumber\\
&\,\asymp \,L^{3/2}(L+x)^{-3/2}(L-x)^{-3/2}\,,
\end{align}
for every $x\in E_L$.
 
\subsection{Monotonicity}\label{mono}
An important property satisfied by the Markov chains introduced above is the monotonicity with respect to the 
partial order \eqref{order}. 
A convenient way of stating the monotonicity property
is that there exists a coupling $\bbP$ of the trajectories of the Markov chains corresponding  to distinct initial conditions such that
if $\eta\leq \eta'$ then $\bbP$ almost surely 
$v_t^{\eta}\leq v_t^{\eta'}$ for all $t\geq 0$. More generally, one can define a coupling $\bbP$ of trajectories corresponding 
to distinct top/bottom constraints and distinct initial conditions such that
if $\z\leq \z'$, $\xi\leq \xi'$, and $\eta\leq \eta'$, then $\bbP$ almost surely
$v_t^{\eta; \z,\xi}\leq v_t^{\eta'; \z',\xi'}$ for all $t\geq 0$.
Recall that a
function $f:\O\mapsto\bbR$ is said to be increasing if $f(\eta)\leq f(\eta')$ whenever $\eta\leq \eta'$. An event $A$ is
increasing if the indicator function $\ind_A$ is increasing.  The monotonicity property of the dynamics
implies the so-called FKG property of the equilibrium measures $\pi=\pi_{2L}^{\l,\z,\xi}$: for every pair of increasing functions $f,g:\O\mapsto\bbR$, one has $\pi(fg)\geq \pi(f)\pi(g)$. 
We refer to \cite[Section 2]{CMT1} for a more detailed discussion of the monotone coupling
and the consequences of monotonicity. 

\begin{lemma}\label{density}
Let $\mu$ be a probability on $\O$ and write $f(\eta)=\mu(\eta)/\pi(\eta)$, and
$f_t(\eta) = \nu_t^\mu(\eta)/\pi(\eta)$, $t>0$. If $f$ is increasing then,  for every $t>0$,
$f_t$ is increasing. As a consequence, there exists an increasing event $A$ such that
\begin{equation}\label{eventa}
\|\nu_t^\mu - \pi\| = \nu_t^\mu(A) - \pi(A)\,.
\end{equation}
\end{lemma}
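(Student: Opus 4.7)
My plan is to exploit reversibility to rewrite the density $f_t$ as a semigroup action applied to $f$, and then invoke monotonicity via the global coupling to conclude $f_t$ is increasing. The second statement then follows from the standard variational characterization of total variation.

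First I would compute, using the reversibility relation $\pi(\zeta)P_t(\zeta,\eta)=\pi(\eta)P_t(\eta,\zeta)$, that
\begin{equation*}
f_t(\eta)=\frac{\nu_t^\mu(\eta)}{\pi(\eta)}=\sum_{\zeta}\frac{\mu(\zeta)P_t(\zeta,\eta)}{\pi(\eta)}=\sum_{\zeta}\frac{\mu(\zeta)}{\pi(\zeta)}P_t(\eta,\zeta)=[P_tf](\eta).
\end{equation*}
Hence $f_t(\eta)=\bbE^{\eta}[f(v_t^\eta)]$. Now let $\eta\leq\eta'$ and let $\bbP$ denote the monotone coupling of Section \ref{mono}, under which $v_t^\eta\leq v_t^{\eta'}$ almost surely. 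Since $f$ is assumed increasing, $f(v_t^\eta)\leq f(v_t^{\eta'})$ $\bbP$-a.s., and taking expectation gives $f_t(\eta)\leq f_t(\eta')$. Thus $f_t$ is increasing, which proves the first assertion.

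For the second assertion, I would use the familiar identity
\begin{equation*}
\|\nu_t^\mu-\pi\|=\max_{B\subset\O}\bigl[\nu_t^\mu(B)-\pi(B)\bigr],
\end{equation*}
where the maximum is attained at the set $A:=\{\eta\in\O:\nu_t^\mu(\eta)\geq\pi(\eta)\}=\{\eta:f_t(\eta)\geq 1\}$. Since $f_t$ is increasing by the first part, the sub-level set description shows that $A$ is an upper set for the partial order \eqref{order}, i.e.\ an increasing event, and the identity \eqref{eventa} holds.

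There is no serious obstacle here: the only slightly subtle point is the reversibility computation that identifies $f_t$ with the semigroup acting on $f$, after which monotonicity of the coupling closes the argument immediately.
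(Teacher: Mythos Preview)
Your proof is correct and follows essentially the same approach as the paper: rewrite $f_t=P_tf$ via reversibility, then use the monotone coupling to propagate monotonicity, and finally take $A=\{f_t\geq 1\}$ in the variational formula for total variation. One tiny terminological slip: $A=\{f_t\geq 1\}$ is a \emph{super}-level set, not a sub-level set, but your conclusion that it is an increasing event is of course correct.
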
 
 \proof
Write $\nu_t^\mu(\eta)=\sum_{\eta_0\in\O}\mu(\eta_0)P_t(\eta_0,\eta)$, where $P_t(\cdot,\cdot)$ stands for the kernel of the Markov chain.
Reversibility then gives 
\begin{equation}\label{density01}
f_t(\eta)=\sum_{\eta_0\in\O}f(\eta_0)\pi(\eta_0)P_t(\eta_0,\eta)/\pi(\eta)
=\sum_{\eta_0\in\O}f(\eta_0)P_t(\eta,\eta_0)\,.
\end{equation}
 Next, 
 let $\bbP$ denote the monotone coupling introduced above and let $\bbE$ denote expectation w.r.t.\ $\bbP$. 
 Then, \eqref{density01} coincides with $\bbE[f(v_t^\eta)]$, and if $\eta\leq\eta'$, 
 $$
 f_t(\eta')-f_t(\eta) = \bbE[f(v_t^{\eta'})-f(v_t^{\eta})] =  \bbE[f(v_t^{\eta'})-f(v_t^{\eta})\,;\;v_t^{\eta'}\geq v_t^{\eta}] \,.
 $$
 Thus, 
 $f_t$ is increasing whenever $f$ is.   
 Finally, it is well known that the total variation distance can be written in the form \eqref{eventa} where $A=\{\eta\,:\,\nu_t^\mu(A)\geq \pi(A)\}$. Since $A=\{f_t\geq 1\}$, $A$ is increasing whenever $f$ is.
\qed

\bigskip
Lemma \ref{le:monopm} compares arbitrary initial conditions to the extremal initial conditions. Lemma~\ref{lemma:hub} states a useful sub-multiplicativity property satisfied by extremal evolutions. For lightness of notation, we state these results only in the case of no top/bottom boundaries, i.e.\ $\z=\vee$, $\xi=\wedge$, but the same applies for general $\z,\xi$ with exactly the same proof. 
\begin{lemma}
\label{le:monopm} 
For any $t>0$ and any $\eta,\eta'\in\O$:
$$\|\nu_t^{\eta} - \nu_t^{\eta'}\|\leq 4L^2\,\|\nu_t^{\wedge} - \nu_t^{\vee}\|\,.$$
As a consequence,
\begin{equation*}
 \gap=-\lim_{t\to\infty} \frac{1}{t}\log \|\nu_t^{\wedge} - \nu_t^{\vee}\|\
 \,.
\end{equation*}
\end{lemma}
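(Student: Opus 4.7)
The plan is to exploit the monotone coupling to reduce the first inequality to a quantitative comparison between a union-bound "miscoupling probability" at time $t$ and the total variation distance $\|\nu_t^{\wedge}-\nu_t^{\vee}\|$. The key identity that does the actual work is the layer-cake decomposition of the height differences.

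First I would observe that, since $\vee\leq\eta,\eta'\leq\wedge$ for every polymer configuration, the global monotone coupling $\bbP$ satisfies $v_t^{\vee}\leq v_t^{\eta},v_t^{\eta'}\leq v_t^{\wedge}$ almost surely for every $t\geq0$. Consequently $\{v_t^{\eta}\neq v_t^{\eta'}\}\subseteq\{v_t^{\vee}\neq v_t^{\wedge}\}$ and the standard coupling bound for total variation gives
\[
\|\nu_t^{\eta}-\nu_t^{\eta'}\|\;\leq\;\bbP\bigl(v_t^{\eta}\neq v_t^{\eta'}\bigr)\;\leq\;\bbP\bigl(v_t^{\vee}\neq v_t^{\wedge}\bigr).
\]
So it suffices to show $\bbP(v_t^{\vee}\neq v_t^{\wedge})\leq 4L^{2}\,\|\nu_t^{\wedge}-\nu_t^{\vee}\|$.

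Next I would estimate the right-hand probability site by site. By a union bound and the fact that $v_t^{\wedge}_x-v_t^{\vee}_x$ is a non-negative integer on the coupled event,
\[
\bbP\bigl(v_t^{\vee}\neq v_t^{\wedge}\bigr)\;\leq\;\sum_{x=-L+1}^{L-1}\bbP\bigl(v_t^{\wedge}_x>v_t^{\vee}_x\bigr)\;\leq\;\sum_{x=-L+1}^{L-1}\bbE\bigl[v_t^{\wedge}_x-v_t^{\vee}_x\bigr].
\]
Since $|\eta_x|\leq L-|x|$ on $\O$, the layer-cake identity applied in the coupled space reads
\[
\bbE\bigl[v_t^{\wedge}_x-v_t^{\vee}_x\bigr]=\sum_{k=-(L-|x|)+1}^{L-|x|}\Bigl(\nu_t^{\wedge}(\eta_x\geq k)-\nu_t^{\vee}(\eta_x\geq k)\Bigr),
\]
and each summand is a non-negative number (by monotone coupling applied to the increasing event $\{\eta_x\geq k\}$) bounded by $\|\nu_t^{\wedge}-\nu_t^{\vee}\|$. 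The latter bound is the core step: one applies Lemma~\ref{density} to the signed density $(\nu_t^{\wedge}-\nu_t^{\vee})/\pi$, which is increasing since $\nu_t^{\wedge}/\pi$ is increasing and $\nu_t^{\vee}/\pi$ is decreasing; hence the total variation is attained on an increasing event, and for any other increasing event $B$ the non-negative quantity $\nu_t^{\wedge}(B)-\nu_t^{\vee}(B)$ is at most $\|\nu_t^{\wedge}-\nu_t^{\vee}\|$. Counting gives at most $2(L-|x|)$ non-trivial values of $k$, hence
\[
\sum_{x=-L+1}^{L-1}\bbE\bigl[v_t^{\wedge}_x-v_t^{\vee}_x\bigr]\;\leq\;2\Bigl(\sum_{x=-L+1}^{L-1}(L-|x|)\Bigr)\|\nu_t^{\wedge}-\nu_t^{\vee}\|\leq 4L^2\,\|\nu_t^{\wedge}-\nu_t^{\vee}\|,
\]
which closes the main estimate.

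For the consequence, I would combine the just-proved inequality with the general identity \eqref{clasgap}. The trivial inequality $\|\nu_t^{\wedge}-\nu_t^{\vee}\|\leq\max_{\eta,\eta'}\|\nu_t^{\eta}-\nu_t^{\eta'}\|$ together with the bound $\max_{\eta,\eta'}\|\nu_t^{\eta}-\nu_t^{\eta'}\|\leq 4L^{2}\,\|\nu_t^{\wedge}-\nu_t^{\vee}\|$ sandwiches $-\frac{1}{t}\log\max_{\eta,\eta'}\|\nu_t^{\eta}-\nu_t^{\eta'}\|$ between $-\frac{1}{t}\log\|\nu_t^{\wedge}-\nu_t^{\vee}\|$ and the same quantity minus $\frac{\log(4L^{2})}{t}$; sending $t\to\infty$ the prefactor disappears and \eqref{clasgap} identifies the limit as $\gap$.

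The main obstacle is the middle step: one must be sure that individual increasing-event differences are controlled by the total variation distance, which is not automatic for arbitrary signed measures. The elegant way out is precisely Lemma~\ref{density} applied to the \emph{difference} of the two evolving densities, which is increasing thanks to the coupling of monotone densities with decreasing ones; without this observation one would only obtain a factor $2$, not $1$, in front of $\|\nu_t^{\wedge}-\nu_t^{\vee}\|$ for each $(x,k)$ term and the argument would still carry through with a harmless worsening of the constant, but the cleanest bookkeeping uses the monotone structure.
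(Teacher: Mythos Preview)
Your proof is correct and follows essentially the same route as the paper: coupling bound, union bound over sites, layer-cake over heights, and then bounding each term by the total variation distance; the consequence is deduced identically from \eqref{clasgap}.

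One remark: your discussion of the ``main obstacle'' is unnecessary. For \emph{any} event $B$ and any two probability measures $\mu,\nu$ one has $|\mu(B)-\nu(B)|\le\|\mu-\nu\|$ by the very definition of total variation distance, so the bound $\nu_t^{\wedge}(\eta_x\geq k)-\nu_t^{\vee}(\eta_x\geq k)\le\|\nu_t^{\wedge}-\nu_t^{\vee}\|$ is automatic and requires no appeal to Lemma~\ref{density} or to the monotonicity of the density. The paper's proof simply uses this directly.
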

\proof 
Let $\bbP$ denote the monotone coupling as above. 
Then, 
\begin{align*}
\|\nu_t^{\eta} - \nu_t^{\eta'}\|&\leq 
\bbP(v_t^{\eta}\neq v_t^{\eta'})
\leq \bbP(v_t^{\wedge}\neq v_t^{\vee})\\ & \leq 
\sum_{x=-L+1}^{L-1}\sum_{h=-L}^{L-1}
[\bbP((v_t^{\wedge})_x> h) - \bbP((v_t^{\vee})_x> h)]\\ &
\leq 
4L^2\,\|\nu_t^{\wedge} - \nu_t^{\vee}\|\,.
\end{align*}
The second point follows from the first one and the classical characterization \eqref{clasgap} of the spectral gap.
\qed

\begin{lemma}
 \label{lemma:hub}
For any $s,t\geq 0$,
\begin{equation*}
  \|\nu^{\wedge}_{t+s}-\nu^{\vee}_{t+s}\|
  \leq \|\nu^{\wedge}_{t}-\nu^{\vee}_{t}\|\, \|\nu^{\wedge}_{s}-\nu^{\vee}_{s}\|\,.
\end{equation*}
\end{lemma}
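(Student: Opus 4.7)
The plan is to express the total variation distance at time $t+s$ as the gap of a single increasing test function integrated against the extremal evolutions at time $t$, and then to bound the range of this test function by the total variation at time $s$.

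First, I would apply Lemma~\ref{density} to the initial measure $\mu=\delta_\wedge$, whose density with respect to $\pi$ is trivially increasing (it is supported on the maximum). This yields that $\nu_{t+s}^{\wedge}/\pi$ is increasing. The symmetry of the dynamics and of $\pi$ under the involution $\eta\mapsto-\eta$, which swaps $\wedge$ with $\vee$, then implies that $\nu_{t+s}^{\vee}/\pi$ is decreasing. Hence $(\nu_{t+s}^{\wedge}-\nu_{t+s}^{\vee})/\pi$ is an increasing function, and the set $A:=\{\eta:\,\nu_{t+s}^{\wedge}(\eta)>\nu_{t+s}^{\vee}(\eta)\}$ is an increasing event, with $\|\nu_{t+s}^{\wedge}-\nu_{t+s}^{\vee}\|=\nu_{t+s}^{\wedge}(A)-\nu_{t+s}^{\vee}(A)$.

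Next, the semigroup property rewrites this quantity as $\nu_t^{\wedge}(u)-\nu_t^{\vee}(u)$ with $u(\eta):=P_s(\eta,A)\in[0,1]$. Since $A$ is increasing and the dynamics is monotone, $u$ is itself increasing. Setting $\Delta:=u(\wedge)-u(\vee)\geq 0$ (the case $\Delta=0$ forces $u$ to be constant and the inequality to be trivial) and $\hat u:=(u-u(\vee))/\Delta$, the function $\hat u$ is increasing and takes values in $[0,1]$. The layer-cake representation then gives
\[
\nu_t^{\wedge}(\hat u)-\nu_t^{\vee}(\hat u)=\int_0^1 \bigl[\nu_t^{\wedge}(\{\hat u>c\})-\nu_t^{\vee}(\{\hat u>c\})\bigr]\,\dd c\leq\|\nu_t^{\wedge}-\nu_t^{\vee}\|,
\]
since the mass difference on any single event is bounded by the total variation and the integration interval has length one. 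Finally, $\Delta=\nu_s^{\wedge}(A)-\nu_s^{\vee}(A)\leq\|\nu_s^{\wedge}-\nu_s^{\vee}\|$, so combining with the identity $\nu_t^{\wedge}(u)-\nu_t^{\vee}(u)=\Delta\bigl[\nu_t^{\wedge}(\hat u)-\nu_t^{\vee}(\hat u)\bigr]$ yields the claim.

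The only genuinely content-laden step is the first one, namely the observation that monotonicity of the densities of the extremal evolutions forces the total-variation-optimal event to be increasing; this is what allows us to pull back an increasing function $u$ through the second time interval. The crucial device in the second step is the recentering by the minimum $u(\vee)$: if one worked instead with the trivial range $[0,1]$ of $u$, the layer-cake argument would only yield the ordinary Markov contraction $\|\nu_{t+s}^\wedge-\nu_{t+s}^\vee\|\leq\|\nu_t^\wedge-\nu_t^\vee\|$, whereas reducing to the true range $\Delta\leq\|\nu_s^\wedge-\nu_s^\vee\|$ produces the desired submultiplicative bound.
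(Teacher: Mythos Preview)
Your proof is correct and follows essentially the same approach as the paper's: both identify an increasing event $A$ realizing the total variation at time $t+s$, rewrite the difference as $\nu_t^\wedge(u)-\nu_t^\vee(u)$ for the increasing function $u=P_s(\cdot,A)$ whose range is at most $\|\nu_s^\wedge-\nu_s^\vee\|$, and then bound this by the oscillation of $u$ times $\|\nu_t^\wedge-\nu_t^\vee\|$. The only cosmetic difference is in this last step, where you invoke a layer-cake decomposition of the recentered $\hat u$ while the paper instead takes a maximal coupling $\rho$ of $\nu_t^\wedge$ and $\nu_t^\vee$ and bounds the integrand $\nu_s^\eta(A)-\nu_s^\sigma(A)$ by $u(\wedge)-u(\vee)$ on the set $\{\eta\neq\sigma\}$.
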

\begin{proof}
With the same argument of Lemma \ref{density}, 
for some increasing event $A$ 
\begin{equation*}
\|\nu^{\wedge}_{t+s}-\nu^{\vee}_{t+s}\|=
\nu^{\wedge}_{t+s}(A)-\nu^{\vee}_{t+s}(A)\,.
\end{equation*}
Let $\rho$  be a coupling beween 
$\nu^{\wedge}_{t}$ and $\nu^{\vee}_{t}$ at fixed time $t\geq 0$. Then 
\begin{align*}
 \nu^{\wedge}_{t+s}(A)-\nu^{\vee}_{t+s}(A)&
 =\int (\nu^{\eta}_{s}(A)-\nu^{\sigma}_{s}(A)) d\rho(\eta,\sigma)\\
					  &=\int (\nu^{\eta}_{s}(A)-\nu^{\sigma}_{s}(A)) \ind(\sigma\neq\eta) d\rho(\eta,\sigma)\\
					  &\leq (\mu^{\wedge}_{s}(A)-\mu^{\vee}_{s}(A))\rho(\sigma\neq\eta)\\
					  &\leq \|\nu^{\wedge}_{s}-\nu^{\vee}_{s}||\,\rho(\sigma\neq\eta)\,.
\end{align*}
To conclude, we take $\r$ as the maximal coupling, i.e.\ such that $\r(\sigma\neq\eta)=\|\nu^{\wedge}_{t}-\nu^{\vee}_{t}\|$.
\end{proof}

\subsection{Relaxation in one phase}\label{piplus}
Here we obtain results concerning the polymer dynamics in the phase $\pi^+$ defined after \eqref{oplusminus}; cf.\ Proposition \ref{muretto} below. Then, we show that the polymer started at the maximal  configuration $\wedge$ 
relaxes first to the restricted equilibrium $\pi^+$ in a time $O(L^{2+\d})$ for arbitrarily small $\d>0$, 
while 
it takes much longer to reach the full equilibrium $\pi$; cf.\ Lemma \ref{lemma:plus} and Lemma \ref{lemma:peq} below. 

Recall the definition \eqref{oplusminus} of the subspace $\O^+\subset\O$,  
where $L\gg \ell$, and $\ell$ diverges as $L\to\infty$; see \eqref{ellL} below.  
The corresponding restricted equilibrium is given by $\pi^{+}=\pi(\cdot|\Omega^+)$.
Note that this is a particular instance of the polymer equilibrium $\pi^{\l,\z,\xi}_{2L}$ with top/bottom boundaries: 
the top is $\xi=\wedge$ while the bottom $\z=\z(\O^+)$ is the lowest element of $\O^+$. 
Similarly, one defines $\O^-=-\O^+$, i.e.\ use \eqref{oplusminus} with $\eta_x>0$ replaced by $\eta_x<0$, and the equilibrium  
$\pi^-$ is defined accordingly. 

Since $\l<1$, the equilibrium bounds \eqref{32} imply that 
\begin{equation}\label{pioplusminus}
\pi(\O^+) = \pi(\O^-) = \frac12 + O(\ell^{-1/2})\,;
\end{equation}
see e.g.\ \cite[Section 2]{CMT1}. In particular, if $\ell$ diverges as $L\to\infty$,  then
\begin{equation}\label{piplusminus}
\big\|\pi-\frac12 (\pi^+ + \pi^-)\big\| = o(1)\,.
\end{equation}
What follows depends only marginally on the precise dependence of
$\ell$ on $L$, provided that $L\gg \ell\gg 1$. For the sake  of simplicity we shall fix its value as 
\begin{equation}\label{ellL}
\ell(L) = (\log L)^\frac14\,.
\end{equation}
This choice turns out to be convenient in the proof of Proposition \ref{muretto} below, but we point out that any choice of the form $\ell(L)=O(L^\e)$ for small $\e>0$ would be sufficient to obtain the same conclusion with a little more work.

We start by establishing a mixing time upper bound for the dynamics constrained to stay in $\O^+$, i.e.\ the process evolving with bottom boundary given by $\z=\z(\O^+)$. To avoid confusion we shall write 
$\mu_t^\eta$ (instead of $\nu^\eta_t$) for the law at time $t$ of this Markov chain with state space $\O^+$ and initial condition $\eta$. We write $\mathcal L^+$ for its generator and $\gap^+$ for the associated spectral gap.
\begin{proposition}\label{muretto}
For every $\e>0$, there exists $L_0=L_0(\e)$ such that for all $L\geq L_0$, for all $t\geq 0$ and all initial conditions $\eta\in\O^+:$
\begin{equation}\label{promuretto10}
\|\mu^{\eta}_t-\pi^+\|\leq 4L^2\exp{\left(-\,t/L^{2+\e}\right)}\,.
\end{equation}
In particular, 
$
\gap^+\geq L^{-2-\e}\,.$
\end{proposition}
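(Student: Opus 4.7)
The strategy splits into a reduction of \eqref{promuretto10} to a spectral-gap estimate, and the proof of that estimate.

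\emph{Reduction.} The proof of Lemma \ref{le:monopm} goes through verbatim for the constrained process on $\O^+$, since the monotone coupling preserves the partial order and the $\O^+$-constraint. This yields
\[
\|\mu_t^\eta - \mu_t^{\eta'}\| \leq 4L^2\,\|\mu_t^\wedge - \mu_t^{\z(\O^+)}\|
\]
for every $\eta,\eta'\in\O^+$. Averaging over $\eta'\sim\pi^+$ and using $\pi^+$-invariance gives
\[
\|\mu_t^\eta - \pi^+\| \leq 4L^2\,\|\mu_t^\wedge - \mu_t^{\z(\O^+)}\|,
\]
so it suffices to show $\|\mu_t^\wedge - \mu_t^{\z(\O^+)}\| \leq \exp(-t/L^{2+\e})$. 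I would deduce this by iterating the sub-multiplicativity of Lemma \ref{lemma:hub} (similarly adapted to the constrained chain), provided one establishes a uniform bound $\|\mu_{t_0}^\wedge - \mu_{t_0}^{\z(\O^+)}\| \leq e^{-1}$ at some reference time $t_0 = O(L^{2+\e/2})$. This initial bound would follow from the spectral-gap estimate below, combined with the $L^2$-decay \eqref{trelvar} and a short-time smoothing step that tames the $L^2$-norm of the extremal density (absorbing the $e^{O(L)}$ loss coming from $1/\pi^+(\wedge)$ into the exponent $\e$).

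\emph{Spectral-gap estimate.} To prove $\gap^+ \geq L^{-2-\e}$, I would apply Proposition \ref{jerr} with a partition of $\O^+$ indexed by the joint height profile of the path on the two boundary strips $[-L,-L+\ell]$ and $[L-\ell,L]$, where $\ell=(\log L)^{1/4}$. On each block the restricted chain reduces essentially to the dynamics of a polymer on a segment of length $\Theta(L)$ with a wall at height zero and fixed boundary data, for which polynomial spectral-gap lower bounds are available in \cite{MR,CMT1}. The projected chain is supported on a set of profiles of size at most $e^{O(\ell)}=e^{O((\log L)^{1/4})}$, subpolynomial in $L$, and its gap is controllable by standard birth-and-death or canonical-paths estimates on the profile space. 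The parameter $\gamma$ in \eqref{jerr11}, bounding the rate of transitions between blocks, is at most $O(\ell)$. Combining the three inputs via \eqref{jerr11} yields $\gap^+ \geq L^{-2-\e}$.

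\emph{Main obstacle.} The delicate step is the gap bound for the chain restricted to each block: while polynomial bounds for the polymer with a wall are by now standard, the block-dependent boundary data induced by the partition is not quite the one treated in \cite{MR,CMT1}, and one must carefully adapt (or reprove) those estimates. An alternative route would construct canonical paths directly inside $\O^+$, bypassing the decomposition entirely, but this requires an intricate path-construction respecting the strict positivity in the bulk region.
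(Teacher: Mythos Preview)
Your reduction has a genuine gap. The plan to deduce the initial estimate $\|\mu_{t_0}^\wedge-\mu_{t_0}^{\z(\O^+)}\|\le e^{-1}$ at $t_0=O(L^{2+\e/2})$ from the spectral-gap bound via \eqref{trelvar} cannot work: applying \eqref{trelvar} with $\mu=\d_\wedge$ (or $\d_{\z(\O^+)}$) costs a factor $\sqrt{1/\pi^+(\wedge)}=e^{\Theta(L)}$, so one needs $t_0\,\gap^+\gtrsim L$, i.e.\ $t_0\gtrsim L^{3}$, not $L^{2+\e/2}$. There is no ``short-time smoothing step'' that tames this factor without an independent mixing argument, which is exactly what has to be proved. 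Equivalently, sub-multiplicativity of $f(t)=\|\mu_t^\wedge-\mu_t^{\z}\|$ together with the asymptotic identity $\gap^+=-\lim t^{-1}\log f(t)$ only yields the \emph{lower} bound $f(t)\ge e^{-t\,\gap^+}$ (Fekete), not the upper bound you want. Thus even if your decomposition route to $\gap^+\ge L^{-2-\e}$ were completed, you would obtain at best $\|\mu_t^\eta-\pi^+\|\le e^{-t/L^{3+\e}}$, which is strictly weaker than \eqref{promuretto10}.

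The paper proceeds in the opposite direction: it establishes the coupling bound $\|\mu_T^\wedge-\mu_T^{\z}\|\le 1-L^{-\e}$ at $T=L^{2+\e}$ \emph{directly}, and then reads off $\gap^+\ge L^{-2-\e}$ from \eqref{clasgap}. The tool you are missing is the Peres--Winkler censoring inequality. One covers $\{-L,\dots,L\}$ by three overlapping pieces $I_1=\{-L,\dots,-L+\ell^2\}$, $I_2=\{-L+\ell,\dots,L-\ell\}$, $I_3=\{L-\ell^2,\dots,L\}$, and runs a censored schedule: evolve only in $I_2$ for time $L^{2+\e_1}$, then only in $I_1\cup I_3$ for time $L^{\e_1}$, then only in $I_2$ again. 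After the first $I_2$-phase each extremal censored evolution is (up to $O(L^{-p})$) at its conditional equilibrium on $I_2$; an equilibrium estimate shows that with probability at least $c\,\ell^{-12}\ge L^{-\e}$ \emph{both} take the minimal admissible value at the inner endpoints $\pm(L-\ell^2+1)$. Conditioned on this, the short $I_1\cup I_3$-phase (mixing time at most $e^{O(\ell^2)}=L^{o(1)}$) couples the boundary strips, and the final $I_2$-phase couples the bulk. Censoring ensures the censored extremal evolutions stochastically sandwich the true ones, so $\|\mu_T^\wedge-\mu_T^{\z}\|\le\|\mu_T^{\wedge,c}-\mu_T^{\z,c}\|\le 1-L^{-\e}$; then Lemmas~\ref{le:monopm} and~\ref{lemma:hub} iterate this to \eqref{promuretto10}. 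Your Jerrum-type decomposition of $\O^+$ by boundary profiles is a plausible alternative route to the gap statement alone, but it does not bypass the need for a censoring/coupling argument to get \eqref{promuretto10}.
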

\proof 
The last statement follows from \eqref{promuretto10} and \eqref{clasgap}. 
To prove \eqref{promuretto10} we establish that
for every $\e>0$, there is a constant $L_0=L_0(\e)>0$ such that, 
taking $T=L^{2+\e}$,  
we have
\begin{equation}\label{promuretto1}
\|\mu^{\wedge}_T-\mu^{\z}_T\|\leq 1-L^{-\e}\,,
\end{equation}
for all $L\geq L_0(\e)$, where $\z$ stands for 
the minimal element $\z=\z(\O^+)$ of $\O^+$. 
Once \eqref{promuretto1}
is available, we obtain  \eqref{promuretto10} (with a new value of $\e$) from Lemma \ref{le:monopm}, since 
Lemma \ref{lemma:hub} (which is also valid for the restricted dynamic) and \eqref{promuretto1} imply 
\begin{align*}
\|\mu^{\wedge}_t-\mu^{\z}_t\| &\leq \left(\|\mu^{\wedge}_T-\mu^{\z}_T\|
\right)^{\lfloor t/T\rfloor}\\
&
\leq \exp{\left(-\lfloor t/T\rfloor L^{-\e})\right)}\leq 2\exp{\left(-\,t/L^{2+2\e}\right)}\,,
\end{align*}
for any $L$ large enough. 

To prove \eqref{promuretto1}, 
we divide the sites $x\in\{-L,\dots,L\}$ in three overlapping regions: 
$$I_1=\{-L,\dots,-L+\ell^2)\,, \;I_2=\{-L+\ell,\dots,L-\ell\}\,,\;\text{ and} \; I_3=\{L-\ell^2,\dots,L\}\,,$$
where $\ell=\ell(L)$ is given by \eqref{ellL}.  
 Let $T_{2}=L^{2+\e_1}$, $T_1=L^{\e_1}$ 
 with some $\e_1>0$ such that $T=L^{2+\e}\geq T':=2T_{2} + T_{1}$.
We shall prove (\ref{promuretto1}) with $T$ replaced by $T'$ (this implies the claim since by Lemma \ref{le:monopm} the left hand side of \eqref{promuretto1} 
is monotone as a function of $T$).
 Call $\mu_{T'}^{\eta_0,c}$ the law of the 
``censored'' process obtained as follows. Start from $\eta_0$ at time $0$ and, 
for time $t\in [0,T_2]$ reject all the updates involving $x\notin I_2$. For time $t\in(T_2,T_2+T_1]$ reject all updates involving  $x\notin I_1\cup I_3$,
and for time $t\in(T_2+T_1,2T_2+T_1=T']$ reject all the updates involving $x\notin I_2$. 
From the Peres-Winkler censoring inequality \cite[Theorem 16.5]{notePeres} one has that 
$\mu^{\wedge,c}_{T'}$ stochastically dominates $\mu^{\wedge}_{T'}$.
Similarly, $\mu^{\z,c}_{T'}$ is stochastically dominated by $\mu^{\z}_{T'}$. On the other hand, as in Lemma \ref{density}, one has 
$$
\|\mu^{\wedge}_{T'}-\mu^{\z}_{T'}\| = \mu^{\wedge}_{T'}(A) - \mu^{\z}_{T'}(A)\,,
$$
 where $A\subset\O^+$ is an increasing event.
Therefore, $$\|\mu^{\wedge}_{T'}-\mu^{\z}_{T'}\| \leq  
\mu^{\wedge,c}_{T'}(A) - \mu^{\z,c}_{T'}(A)\leq \|\mu^{\wedge,c}_{T'}-\mu^{\z,c}_{T'}\|\,,$$ and the lemma follows once we show that 
 \begin{equation}\label{muretto10}
  \|\mu^{\wedge,c}_{T'}-\mu^{\z,c}_{T'}\| \leq 1-L^{-\e}
  \,. 
\end{equation}

To prove (\ref{muretto10}) we shall couple the two configurations 
$\eta^{\wedge,c}_{T'}$, $\eta^{\z,c}_{T'}$ with law $\mu^{\wedge,c}_{T'} $, $\mu^{\z,c}_{T'}$ respectively. 
From the analysis of the polymer with the wall \cite[Section 4]{CMT1},
 it is not hard to infer that uniformly in the boundary values at $-L+\ell-1$ and $L-\ell+1$ the system evolving in the region $I_2$ has a mixing time 
 $O(L^2\log L)$. Therefore, after a time $T_2=L^{2+\e_1}$, up to $O(L^{-p})$ corrections for a large constant $p>0$ (for $L\geq L_0(p)$),  
 for any event $E$, $\mu^{\wedge,c}_{T_2}(E)$ coincides with the equilibrium probability of $E$  in $I_2$ 
 with boundary conditions $\wedge$ at $-L+\ell-1$ and $L-\ell+1$. The same applies to $\mu^{\z,c}_{T_2}(E)$ provided the equilibrium is taken with boundary conditions $\z$ at $-L+\ell-1$ and $L-\ell+1$.
 Choose the event $E$ that the configuration is minimal (in $\O^+$) at both 
 $-L+\ell^2-1$ and $L-\ell^2+1$
 (i.e.\ $\eta_{-L+\ell^2-1}=\eta_{L-\ell^2+1}=0$ if $L-\ell^2$ is odd, and $\eta_{-L+\ell^2-1}=\eta_{L-\ell^2+1}=1$ if $L-\ell^2$ is
 even). From known equilibrium estimates \cite[Section 2]{cf:GB}, it is not difficult to show that at equilibrium, with either of the two boundary conditions considered above, the probability of $E$ is 
 bounded below by $c_1\,\ell^{-6}$ for some constant $c_1>0$ depending
 only on $\l$. Therefore, using an independent coupling in the time-lag $[0,T_2]$, we have that the event $E$ occurs for both  $\eta^{\wedge,c}_{T_2}$, $\eta^{\z,c}_{T_2}$
 with probability at least $c\,\ell^{-12}$. 
  
 Next, conditioned on the event $E$ we see that from time $T_2$ up to time $T_2+T_1$ the two processes evolve (in the regions $I_1$ and $I_3$ only)
 with the same boundary conditions (equal to the minimal configuration at both $-L+\ell^2$ and $L-\ell^2$). Since $T_1=L^\e$ and the mixing time of the system in the regions $I_1$ and $I_3$ (which evolve independently) is certainly at most $e^{O(\ell^2)}$, with probability $1+O(L^{-p})$ (conditionally on event $E$) we have that the two configurations coincide in both regions $I_1,I_2$ at time $T_2+T_1$. Therefore if we let the system run only in $I_2$ now for an additional time 
 $T_2$ we have a 
 probability close to $1$ to have the two configurations coinciding everywhere. It follows that, conditionally on the event $E$, there is a probability of, say, at least $1/2$ of no discrepancy between $\eta^{\wedge,c}_{T'}$ and $\eta^{\z,c}_{T'}$. Therefore, letting $\bbP$ denote the coupling described above, 
 \begin{align*}
  \|\mu^{\wedge,c}_{T'}-\mu^{\z,c}_{T'}\| &\leq \bbP(\eta^{\wedge,c}_{T'}\neq \eta^{\z,c}_{T'})\\
  & \leq 
  \bbP(\eta^{\wedge,c}_{T'}\neq \eta^{\z,c}_{T'}\tc E)\bbP(E) + 
  1-\bbP(E)\\ & \leq 1-\frac12\,\bbP(E)\,.
 \end{align*}
  Since $\bbP(E)\geq c/\ell^{12}\geq 2/L^{\e}$, for $L\geq L_0(\e)$, this implies (\ref{muretto10}).
  \qed
  
 %

\bigskip

We now go back to the model with no top/bottom boundaries,
that is the law
$\nu_t^\eta$ corresponds to the evolution with 
$\xi=\wedge$, $\z=\vee$. 
The next result is crucially based on estimates obtained in \cite[Section 6]{CMT1} for the delocalized regime $\l<1$.  
\begin{lemma}
\label{lemma:plus}
 Uniformly in $t\leq L^{5/2}(\log L)^{-9}$,
\begin{equation*}
 \nu_t^{\wedge}(\Omega^+)= 1+o(1)\,.
\end{equation*}
\end{lemma}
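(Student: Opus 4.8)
The plan is to estimate separately the two ``defect'' terms in
\begin{equation*}
\nu_t^{\wedge}(\O^+)=1-\nu_t^{\wedge}(\O^-)-\nu_t^{\wedge}\big(\O\setminus(\O^+\cup\O^-)\big),
\end{equation*}
and to show that, uniformly for $t\le L^{5/2}(\log L)^{-9}$, both the first (the mass that has already tunnelled to the opposite phase) and the second (the mass trapped in the bottleneck between the phases) are $o(1)$. The first is the substantial point and is where I would use the estimates of \cite[Section 6]{CMT1}; the second is of lower order.

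\emph{The tunnelling term.} Put $\tau_{\O^-}=\inf\{s\ge 0:\eta(s)\in\O^-\}$, so $\nu_t^{\wedge}(\O^-)\le\bbP^{\wedge}(\tau_{\O^-}\le t)$. Specialising Section \ref{killing} to $\G=\O^-$, let $\g_{\O^-}$ and $\nu_{\O^-}$ be the associated killed eigenvalue parameter and quasi-stationary distribution, so that $\bbP^{\nu_{\O^-}}(\tau_{\O^-}>t)=e^{-\g_{\O^-}t}$ by \eqref{killedp6}. Since $\O^-$ is a decreasing event, the monotone coupling of Section \ref{mono} shows that $\eta\mapsto\bbP^{\eta}(\tau_{\O^-}>t)$ is increasing; as $\wedge$ is the maximal configuration, $\bbP^{\wedge}(\tau_{\O^-}>t)=\max_\eta\bbP^{\eta}(\tau_{\O^-}>t)\ge e^{-\g_{\O^-}t}$, hence $\nu_t^{\wedge}(\O^-)\le 1-e^{-\g_{\O^-}t}\le\g_{\O^-}t$. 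It thus suffices to bound $\g_{\O^-}\le C L^{-5/2}(\log L)^{a}$ for some $a<9$. Here I would use the crossing test function $\phi$ of \cite[Section 6]{CMT1}: $\phi\equiv+1$ on $\O^+$, $\phi\equiv-1$ on $\O^-$, $\phi$ depends monotonically on the position of the (essentially unique) crossing, and — since the effective crossing walk with invariant measure \eqref{rev32} has relaxation time $\asymp L^{5/2}$ (cf.\ Lemma \ref{equo} and Lemma \ref{52}) — one has $\cE(\phi,\phi)\le C L^{-5/2}(\log L)^{a}$ while $\var_\pi(\phi)\asymp 1$. The truncation $f:=\max(\phi,0)$ then vanishes on $\O^-$, satisfies $\cE(f,f)\le\cE(\phi,\phi)$ (truncation contracts the Dirichlet form) and $\pi(f^2)\ge(1-o(1))\pi(\O^+)=1/2-o(1)$ by \eqref{pioplusminus}; so the variational principle \eqref{killedp3} gives $\g_{\O^-}\le\cE(f,f)/\pi(f^2)\le C'L^{-5/2}(\log L)^{a}$, and therefore $\g_{\O^-}t\le C'(\log L)^{a-9}=o(1)$.

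\emph{The bottleneck term.} Write $\G_0:=\O\setminus(\O^+\cup\O^-)$ and split $\nu_t^{\wedge}(\G_0)\le\bbP^{\wedge}(\tau_{\O^-}\le t)+\bbP^{\wedge}(\eta(t)\in\G_0,\;\tau_{\O^-}>t)$; the first summand is $o(1)$ by the previous step. For the second, the basic input is the equilibrium estimate $\pi(\G_0)=1-\pi(\O^+)-\pi(\O^-)=O(\ell^{-1/2})=o(1)$, which follows from \eqref{32} (equivalently \eqref{pioplusminus}), because a configuration in $\G_0$ must carry a zero at distance $>\ell$ from both endpoints, an event of probability $\asymp\sum_{k>\ell}k^{-3/2}\asymp\ell^{-1/2}$. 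To transfer this to the dynamics one uses the monotone coupling between the free evolution from $\wedge$ and the $\O^+$-constrained evolution from $\wedge$ (they agree up to the first exit from $\O^+$), together with Proposition \ref{muretto} (equilibration inside $\O^+$ already on the much shorter scale $L^{2+\e}$): this shows that, on $\{\tau_{\O^-}>t\}$ and at any fixed time, $\nu_t^{\wedge}$ puts mass only $O(\ell^{-1/2})=o(1)$ on $\G_0$, while for short times $t\le L^{2+\e}$ the polymer has barely descended from $\wedge$ and the bound is immediate from monotonicity. This part is technical but soft.

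The one genuinely delicate point is the estimate $\g_{\O^-}\le C L^{-5/2}(\log L)^{a}$ with $a<9$ — i.e.\ importing the \emph{sharp} (up to logarithms) form of the spectral estimate of \cite[Section 6]{CMT1}. The crude bound $\g_{\O^-}\le CL^{-5/2+\e}$ is useless, since $L^{\e}$ swamps any power of $\log L$; conversely the naive test function $\ind_{\O^+}$ gives only $\g_{\O^-}=O(\ell^{-1/2})$, wrong by almost a full power of $L^{5/2}$. Everything else — the monotonicity arguments, the quasi-stationary formalism of Section \ref{killing}, and the equilibrium asymptotics — is routine.
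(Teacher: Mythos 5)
Your reduction of the lemma to the two estimates $\nu_t^{\wedge}(\O^-)=o(1)$ and $\nu_t^{\wedge}(\O\setminus(\O^+\cup\O^-))=o(1)$ is the right shape, and the quasi-stationary/monotonicity chain giving $\nu_t^{\wedge}(\O^-)\le 1-e^{-\g_{\O^-}t}\le \g_{\O^-}t$ is correct. But the step you yourself flag as delicate is a genuine gap, not an importable fact. The variational bound you need, $\cE(\phi,\phi)\le C L^{-5/2}(\log L)^{a}$ for a test function on the \emph{full polymer configuration space} with $\var_\pi(\phi)\asymp 1$, is not what \cite[Section 6]{CMT1} provides: the result actually available there is exactly \eqref{5/2}, i.e.\ $\gap\le L^{-5/2+\e}$, with a genuine $L^{\e}$ loss — which, as you note, is useless here. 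Lemma \ref{equo} and Lemma \ref{52} do not rescue this: Lemma \ref{52} bounds the gap of the \emph{effective} single-crossing birth-and-death chain (a projected dynamics on $E_L$ with its own rates \eqref{522}), and its Dirichlet form is not the Dirichlet form \eqref{dirich} of the polymer dynamics evaluated on a lift of the crossing coordinate; moreover such a lift is not even well defined on configurations with several crossings, which carry most of the Dirichlet mass near the saddle. Since a bound $\g_{\O^-}\le CL^{-5/2}(\log L)^a$ via \eqref{killedp3} would in particular yield $\trel\ge c\,L^{5/2}(\log L)^{-a}$ — a sharpening of \eqref{5/2} that neither \cite{CMT1} nor the present paper establishes for the unprojected dynamics — you cannot cite it; you would have to prove it, and that is the entire content of the lemma. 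The paper avoids this by a completely different mechanism: it invokes \cite[Proposition 6.2]{CMT1}, a direct dynamical (drift/martingale-type) estimate showing that the \emph{area} $\sum_x\eta_x(t)$ started from $\wedge$ stays above $L^{3/2}(\log L)^{-3}$ up to times $L^{5/2}(\log L)^{-9}$; since $\O^-\subset\{\sum_x\eta_x<L^{3/2}(\log L)^{-3}\}$, this gives $\nu_t^{\wedge}(\O^-)=o(1)$ with the stated (and otherwise unmotivated) $(\log L)^{-9}$ threshold, with no spectral input at all.

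A secondary, fixable issue is the bottleneck term. Your coupling of the free and $\O^+$-constrained evolutions does agree up to the first exit of the free process from $\O^+$, but that exit happens on a time scale far shorter than $L^{5/2}$ (under $\pi^+$ there is probability $\asymp\ell^{-1/2}$ of a near-zero in the bulk, so exits from $\O^+$ accumulate over times of order $L^{5/2}$), so "the free process has not exited $\O^+$" is not a high-probability event on the relevant time scale and the argument as written does not control $\nu_t^{\wedge}(\G_0)$. The paper instead shows, via Lemma \ref{density} and FKG, that $\nu_t^{\wedge}(\O^+\mid(\O^-)^c)\ge\pi(\O^+\mid(\O^-)^c)=1-o(1)$, which combined with $\nu_t^{\wedge}(\O^-)=o(1)$ finishes the proof; you should replace your soft coupling step by this conditional monotonicity argument.
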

\proof
Define the event 
$A=\{\ \sum_{x=-L}^L\eta_x < L^{3/2}(\log L)^{-3}\ \}$.  
Proposition 6.2 in \cite{CMT1} proves that $ \nu_t^{\wedge}(A) = o(1)$ uniformly in 
$t\leq L^{5/2}(\log L)^{-9}$. Since $\O^-\subset A$ we have 
\begin{equation}\label{plus1}
 \nu_t^{\wedge}(\Omega^-)=o(1)\,,\quad \;t\leq L^{5/2}(\log L)^{-9}\,.
\end{equation}
Next, let us check that 
\begin{equation}\label{plus2}
 \nu_t^{\wedge}(\Omega^+\tc(\O^-)^c)\geq \pi (\Omega^+\tc(\O^-)^c)
 \,,\quad \;t\geq 0\,.
\end{equation}
To this end, observe that since $(\Omega^-)^c$ is increasing, using Lemma \ref{density} the function 
$$
f(\sigma)=1_{(\Omega^-)^c}(\sigma)\,
\frac{\nu^{\wedge}_t(\sigma\tc(\Omega^-)^c)}{\pi(\sigma\tc (\Omega^-)^c)}=
1_{(\Omega^-)^c}(\sigma)\,
\frac{\nu^{\wedge}_t(\sigma)}{\pi(\sigma)}\,\frac{\pi((\Omega^-)^c)}{\nu^{\wedge}_t((\Omega^-)^c)}\,,$$
is increasing. Since $\O^+\subset (\Omega^-)^c$ is increasing, with the FKG property for $\pi$,  
this implies \eqref{plus2}. 
From \eqref{plus1} and \eqref{plus2} we obtain
$$
\nu_t^{\wedge}(\Omega^+)\geq (1+o(1))\nu_t^{\wedge}(\Omega^+\tc(\O^-)^c)
\geq (1+o(1))\pi (\Omega^+\tc(\O^-)^c) = 1+o(1)\,,
$$
where the last bound follows from \eqref{pioplusminus}.
\qed

\begin{lemma}
 \label{lemma:peq}
For any $\e>0$, uniformly in $t\in [L^{2+\e}, L^{5/2}(\log  L)^{-9}]:$
\begin{equation*}
 \|\nu^{\wedge}_t-\pi^+\| = o(1)\,,
\end{equation*}
where $\pi^+$ is defined by $\pi^+=\pi(\cdot\tc\O^+)$.
\end{lemma}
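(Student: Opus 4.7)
The plan is to control $\nu^\wedge_t$ by comparison with the distribution $\mu^\wedge_t$ of the $\O^+$--constrained dynamics (bottom $\z(\O^+)$, top $\wedge$) started from $\wedge$, and then invoke Proposition \ref{muretto} to replace $\mu^\wedge_t$ by $\pi^+$. Applied with exponent $\e/2$, that proposition yields $\|\mu^\wedge_t-\pi^+\|\leq 4L^2\exp(-t/L^{2+\e/2})$, which is $o(1)$ uniformly for $t\geq L^{2+\e}$. What is missing is a comparison between $\nu^\wedge_t$ and $\mu^\wedge_t$ on the window $t\leq L^{5/2}(\log L)^{-9}$; a purely trajectorial comparison (``the unconstrained walk never exits $\O^+$ before $t$'') is a priori stronger than what Lemma \ref{lemma:plus} delivers, so I would argue asymmetrically, only comparing on increasing events.

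First I split the variation distance. Writing $A^{*}:=\{\eta:\nu^\wedge_t(\eta)\geq\pi^+(\eta)\}$, we have $\|\nu^\wedge_t-\pi^+\|=\nu^\wedge_t(A^{*})-\pi^+(A^{*})$. Since $\pi^+\equiv 0$ outside $\O^+$, the set $\O\setminus\O^+$ is contained in $A^{*}$, and
\[\|\nu^\wedge_t-\pi^+\|\;=\;\nu^\wedge_t(B)-\pi^+(B)\,+\,\nu^\wedge_t(\O\setminus\O^+)\,,\]
where $B:=A^{*}\cap\O^+=\{\eta\in\O^+:f_t(\eta)\geq 1/\pi(\O^+)\}$ and $f_t:=\nu^\wedge_t/\pi$. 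The last summand equals $1-\nu^\wedge_t(\O^+)=o(1)$ by Lemma \ref{lemma:plus}.

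The structural point is that $B$ is an up-set in $\O$. Indeed, starting from $\delta_\wedge$ the initial density $\ind_{\{\wedge\}}/\pi(\wedge)$ is an increasing function (the indicator of an up-set), so by Lemma \ref{density} $f_t$ is increasing for every $t>0$; since $\O^+$ is itself up-closed, the super-level set $B$ is up-closed as well. The monotone coupling of Section \ref{mono}, with common initial datum $\wedge$, common top $\wedge$, and bottoms $\vee\leq\z(\O^+)$, then gives the pathwise domination of the unconstrained process by the constrained one, so on the increasing event $B$
\[\nu^\wedge_t(B)\;\leq\;\mu^\wedge_t(B)\;\leq\;\pi^+(B)+\|\mu^\wedge_t-\pi^+\|\;=\;\pi^+(B)+o(1)\,.\]
Plugging this into the previous display yields $\|\nu^\wedge_t-\pi^+\|=o(1)$ uniformly on the stated window.

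There is no substantive obstacle: Proposition \ref{muretto}, Lemma \ref{lemma:plus}, and the increasing--density statement of Lemma \ref{density} are all already available. The one conceptually delicate ingredient is the recognition that one does \emph{not} need to control the (trajectorial) probability that the unconstrained walk exits $\O^+$ before time $t$; the FKG/monotonicity structure forces the optimal set $A^{*}$ in the variation distance to split as an up-set inside $\O^+$ together with all of $\O\setminus\O^+$, and on up-sets the monotone coupling with the constrained process gives the required one-sided bound directly.
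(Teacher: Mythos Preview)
Your proof is correct and follows essentially the same route as the paper's: both arguments use Lemma \ref{density} to identify an increasing event realizing the variation distance, then invoke the monotone coupling to compare $\nu^\wedge_t$ with the $\O^+$-constrained evolution $\mu^\wedge_t$ on that event, and finish with Proposition \ref{muretto} and Lemma \ref{lemma:plus}. The only cosmetic difference is that the paper first passes to the conditional law $\nu^\wedge_t(\cdot\tc\O^+)$ and works with its density against $\pi^+$, whereas you split off the mass on $\O\setminus\O^+$ directly and work with the density $\nu^\wedge_t/\pi$; the resulting super-level sets inside $\O^+$ differ only by a threshold factor $\nu^\wedge_t(\O^+)=1+o(1)$.
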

\proof
Using Lemma \ref{lemma:plus} it is enough to prove
\begin{equation*}
 \|\nu^{\wedge}_t(\cdot\tc\Omega^+)-\pi^+\| = o(1)\,,
\end{equation*}
uniformly in $t\in[L^{2+\e}, L^{5/2}(\log  L)^{-9}]$. 
Consider the function $f:\O\mapsto\bbR$ given by
$$
f(\sigma)=1_{\Omega^+}(\sigma)\,
\frac{\nu^{\wedge}_t(\sigma\tc\Omega^+)}{\pi^+(\sigma)}=1_{\Omega^+}(\sigma)\,
\frac{\nu^{\wedge}_t(\sigma)}{\pi(\sigma)}\,\frac{\pi(\Omega^+)}{\nu^{\wedge}_t(\Omega^+)}\,.
$$
Since $\O^+$ is increasing, Lemma \ref{density} shows that $f$ is increasing.
Therefore, the event
\begin{equation*}
 A=\{\sigma\in\O^+\,:\nu^{\wedge}_t(\sigma\tc\Omega^+)> \pi^+(\sigma) \}
 \end{equation*}
is increasing. 
Using monotonicity we have 
$$\nu^{\wedge}_t(A\tc\Omega^+) = \nu^{\wedge}_t(A)/\nu^{\wedge}_t(\Omega^+)\leq \mu^{\wedge}_t(A)/\nu^{\wedge}_t(\Omega^+)\,,$$
where $ \mu^{\wedge}_t$ denotes the evolution constrained to stay in $\O^+$; see Proposition \ref{muretto}.   
Therefore,
\begin{align*}
 \|\nu^{\wedge}_t(\cdot\tc\Omega^+)-\pi^+\|&=\nu^{\wedge}_t(A\tc\Omega^+)-\pi^+(A) \leq  \frac{
 \mu^{\wedge}_t(A)}{\nu^{\wedge}_t(\Omega^+)}-\pi^+(A)\,.
\end{align*}
The conclusion now follows from Lemma \ref{lemma:plus} and Proposition \ref{muretto}. \qed

\bigskip
The full power of Lemma \ref{lemma:peq} will be seen in the next sections. One of its 
consequences is the fact that the mixing time $\tmix$ can be bounded in terms of the relaxation time 
via
\begin{equation}\label{trelmixo}
\tmix\leq L^{2+\e} + c\,\trel\log L\,,
\end{equation}
for some constant $c>0$. Indeed, \eqref{trelmixo} follows quite easily from Lemma \ref{le:monopm},  Lemma \ref{lemma:peq} and \eqref{trelvar}; see Lemma \ref{trelo} below for a more subtle application of the same reasoning. 
Note that, since $\trel\gg L^{5/2-\e}$, the bound \eqref{trelmixo} improves considerably the standard estimate \eqref{trelmix} by replacing the factor $-\log\pi_*=O(L)$ with a factor $O(\log L)$.

\subsection{Characterization of the principal eigenfunction}
What follows refers to the model with no top/bottom boundaries. 
Recall that $\wedge$, (resp.\ $\vee$) denotes the maximal (resp. minimal) configuration in $\gO$. A function $g:\O\mapsto\bbR$ is called antisymmetric if $g(-\eta)=-g(\eta)$ for all $\eta\in\O$.
The following result gives a precise characterization of one eigenfunction corresponding to $-\gap$.

\begin{proposition}\label{mainvect}
There exists an increasing antisymmetric eigenfunction $g$ of $\cL$, such that  $\|g\|_{\bbL_2(\pi)}=1$. 
It satisfies
\begin{equation}\label{eigeng}
 \mathcal L g = -\gap  \ g.
\end{equation}
Moreover, when $L$ tends to infinity
\begin{equation*}
 g(\wedge)=\|g\|_{\bbL_\infty}=1+o(1).
\end{equation*}
and
\begin{equation}\label{lastp}
  \|g-\left(\ind_{\gO^+}-\ind_{\gO^-}\right)\|_{\bbL_1(\pi)}=o(1).
\end{equation}
\end{proposition}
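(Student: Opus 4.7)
The plan is to construct $g$ as the normalized large-time limit
$$g \;:=\; \lim_{t\to\infty} \frac{e^{\gap t} P_t h}{\|e^{\gap t} P_t h\|_{\bbL_2(\pi)}}, \qquad h \;:=\; \ind_{\O^+} - \ind_{\O^-},$$
i.e.\ as the normalized projection of the natural antisymmetric increasing test function $h$ onto the principal eigenspace of $-\cL$. The involution $\tau:\eta\mapsto -\eta$ preserves both $\pi$ and $\cL$, so $\bbL_2(\pi)$ decomposes orthogonally as $V_s\oplus V_a$ into $\tau$-symmetric and antisymmetric subspaces, each $\cL$-invariant; writing $\gap_s,\gap_a$ for the smallest nonzero eigenvalues on the two components, and noting that the constant lies in $V_s$, we have $\gap=\min(\gap_s,\gap_a)$.

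The first main step is to show $\gap=\gap_a$, so that the $-\gap$ eigenspace meets $V_a$ non-trivially. The variational principle applied to $h\in V_a$ (plus $\pi(h)=0$) gives, exactly as in \cite[Section 6]{CMT1}, $\gap_a\le\cE(h,h)/\|h\|^2_{\bbL_2(\pi)}=O(L^{-5/2+o(1)})$. For the matching bound $\gap_s\ge L^{-2-\e}$, I would take any symmetric, mean-zero $f$ and use $\cE(f,f)\ge 2\pi(\O^+)\cE^+(f,f)$ (retain only transitions staying inside $\O^\pm$), combined with Proposition~\ref{muretto} and the bound $\pi(\O\setminus(\O^+\cup\O^-))=o(1)$ from \eqref{pioplusminus}; symmetry of $f$ forces $\pi^+(f)=o(\|f\|_{\bbL_2(\pi)})$, so $\var_{\pi^+}(f\!\upharpoonright_{\O^+})$ is comparable to $\pi(f^2)$ and the lower bound follows.

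Granted $\gap=\gap_a$, I would then argue the limit defining $g$ exists and has the desired properties. Since $\pi(h)=0$, the spectral expansion yields $e^{\gap t}P_t h\to P_\gap h$ in $\bbL_2(\pi)$, where $P_\gap$ projects onto the $-\gap$ eigenspace. Each $P_t h$ is antisymmetric by $\tau$-invariance and increasing by the monotonicity coupling of Section~\ref{mono}; pointwise convergence on the finite state space $\O$ then transfers both properties to $P_\gap h$. Once Step 3 below shows $P_\gap h\neq 0$, normalization produces $g$ antisymmetric, increasing, unit $\bbL_2$, and satisfying $\cL g=-\gap g$; combining monotonicity with antisymmetry yields $\|g\|_{\bbL_\infty}=g(\wedge)$.

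For the approximation \eqref{lastp} I would prove the stronger $\bbL_2$ statement $\|g-h\|_{\bbL_2(\pi)}=o(1)$ and invoke Cauchy--Schwarz. Writing $\alpha:=\|P_\gap h\|^2_{\bbL_2(\pi)}$ and $\|h\|^2_{\bbL_2(\pi)}=\pi(\O^+\cup\O^-)=1+o(1)$, the spectral bound
$$\cE(h,h)-\gap\|h\|^2_{\bbL_2(\pi)}=\sum_{k:\lambda_k>\gap}(\lambda_k-\gap)\|P_k h\|^2\ge(\gap'-\gap)(\|h\|^2-\alpha),$$
where $\gap'$ is the smallest eigenvalue of $-\cL$ strictly above $\gap$, gives $\|h\|^2-\alpha\le\cE(h,h)/(\gap'-\gap)$. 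A direct estimate of $\cE(h,h)$ (a flip changes $h$ only when the polymer touches height $0,1$ or $2$ in the bulk of $\O^\pm$, an event of $\pi$-probability $O(L^{-5/2+o(1)})$ by \eqref{32}) gives $\cE(h,h)=O(L^{-5/2+o(1)})$. Combining with the (claimed) bound $\gap'\ge L^{-2-\e}$ and with $\gap\le L^{-5/2+\e}$ from \eqref{5/2} forces $\|h\|^2-\alpha=o(1)$, hence $\|g-h\|_{\bbL_2(\pi)}=o(1)$. Finally, for $g(\wedge)=1+o(1)$, apply the eigenvalue relation $(P_t g)(\wedge)=e^{-\gap t}g(\wedge)$ at $t=L^{2+\e}$: Lemma~\ref{lemma:peq} yields $(P_t g)(\wedge)=\pi^+(g)+o(\|g\|_{\bbL_\infty})$; Theorem~\ref{th1} gives $e^{-\gap t}=1-o(1)$; and $\pi^+(g)\le\|g\|_{\bbL_\infty}=g(\wedge)$ combined with $g(\wedge)^2\ge\pi(g^2)=1$ closes the loop.

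The \textbf{main obstacle} is the bound $\gap'\ge L^{-2-\e}$: Step 1 controls the symmetric part of the spectrum, but ruling out a second antisymmetric eigenvalue lying just above $\gap_a$ is not automatic, and will likely require either an extra quantitative input (e.g.\ a variant of Lemma~\ref{lemma:peq} applied to a wider class of initial data, or a two-region decomposition \emph{à la} Proposition~\ref{jerr} adapted to the antisymmetric subspace) or a bootstrap showing that the slow metastable mode is nondegenerate.
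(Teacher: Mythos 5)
Your proposal has one genuine gap, and you have correctly flagged it yourself: the bound $\gap'\geq L^{-2-\e}$ on the \emph{second} eigenvalue of $-\cL$. Your Step 1 only controls the smallest symmetric eigenvalue $\gap_s$, so even granting $\gap=\gap_a$, the eigenvalue $\gap'$ that enters your bound for $\|h\|^2-\alpha$ could a priori be a second \emph{antisymmetric} eigenvalue lying arbitrarily close to $\gap_a$; nothing in the available estimates rules this out, and establishing a spectral gap between the first and second antisymmetric modes is at least as hard as the problem you are trying to solve. So the chain of inequalities in your Step 3 is not closed.

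The paper sidesteps this issue entirely by working with the eigenfunction $g$ itself rather than with the seed function $h$. Since $\|g\|_{\bbL_2(\pi)}=1$ and $\cL g=-\gap\,g$, one has $\cE(g,g)=\gap$ exactly. Discarding from the Dirichlet sum all pairs not lying inside $\gO^+$ gives $\gap \geq \pi(\gO^+)\cE^+(g_{|\gO^+})$, where $\cE^+$ is the Dirichlet form of the process \emph{restricted} to $\gO^+$, whose spectral gap $\gap^+\geq L^{-2-\e}$ is controlled by Proposition~\ref{muretto} --- not by any second eigenvalue of the full generator. This yields $\var_{\pi^+}(g_{|\gO^+})\leq \gap/(\pi(\gO^+)\gap^+)\leq 4L^{-1/2+2\e}$, and together with $\pi^+(g)=1+o(1)$ (obtained essentially as in your $g(\wedge)$ step) gives \eqref{lastp} by decomposing the $\bbL_1$ norm over $\gO^+$, $\gO^-$ and the small remainder. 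The crucial structural point you are missing is that the \emph{restricted} dynamics on one phase, not the second mode of the global generator, is what bounds the fluctuations of $g$ within that phase; this makes $\gap'$ irrelevant.

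Two minor remarks. First, the paper seeds the limiting construction with $\ind_\wedge-\ind_\vee$ rather than $\ind_{\gO^+}-\ind_{\gO^-}$, and proves that the projection onto the $-\gap$ eigenspace is nonzero by computing the exact decay rate of $\|P_t(\ind_\wedge-\ind_\vee)\|_{\bbL_1(\pi)}$ via reversibility and the monotone-coupling identity of Lemma~\ref{le:monopm}; your symmetric/antisymmetric decomposition argument $\gap=\gap_a$ is a valid alternative route to the same conclusion and is not where the problem lies. Second, for $e^{-\gap t}=1-o(1)$ at $t=L^{2+\e}$ you should cite \eqref{5/2} (which gives $\gap\leq L^{-5/2+\e}$), not Theorem~\ref{th1}, which only bounds $\tmix$ from above and hence $\gap$ from below.
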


\begin{proof}
 By decomposing $\ind_{\wedge}-\ind_{\vee}$ on a basis of eigenfunctions of $\mathcal L$ one sees that 
\begin{equation*}
g:=\lim_{t\to \infty} \frac{P_t(\ind_{\wedge}-\ind_{\vee})}{\|P_t(\ind_{\wedge}-\ind_{\vee})\|_{\mathbb L_2(\pi)}}. 
\end{equation*}
is an eigenfunction with unit $\bbL_2$ norm. It is increasing and antisymmetric as $P_t(\ind_{\wedge}-\ind_{\vee})$ is antisymmetric and increasing for all $t$ ($P_t$ preserves monotonicity and symmetries). To prove  \eqref{eigeng}, it suffices to show that the projection of $\ind_{\wedge}-\ind_{\vee}$ on the eigenspace of $\mathcal L$ associated to $-\gap$ is non-zero.
To do so, first observe that by reversibility,
\begin{equation*}
\frac{1}{\pi(\wedge)} \| P_t(\ind_{\wedge}-\ind_{\vee})\|_{\bbL_1(\pi)}=2\,\|\nu_t^{\wedge}-\nu_t^{\vee}\|.
\end{equation*}
Then, by the second point of Lemma \ref{le:monopm}
\begin{align*}
 \lim_{t\to\infty}\frac{1}{t}\log \| P_t(\ind_{\wedge}-\ind_{\vee})\|_{\bbL_2(\pi)}&=\lim_{t\to\infty}\frac{1}{t}\log \| P_t(\ind_{\wedge}-\ind_{\vee})\|_{\bbL_1(\pi)}\\ &= \lim_{t\to\infty}\frac{1}{t}\log \|\nu_t^{\wedge}-\nu_t^{\vee}\|=-\gap\,,
\end{align*}
(where we used equivalence of the norms in finite dimensional spaces).

\medskip

We now estimate the $\bbL_\infty$ norm of $g$. Let $\gep>0$ be small and fixed, and 
let $t_0$ be such that $L^{2+\gep}<t_0<L^{5/2-\gep}$.
The function $g$ is an eigenfunction for $P_{t_0}=\exp(t_0\mathcal L)$, with eigenvalue
$e^{-t_0\gap}$. Therefore
\begin{equation*}
 e^{-t_0\gap} g(\wedge)=\bbE\left[g(v^{\wedge}_{t_0})\right]\leq \pi^+(g)+2g(\wedge) \|\nu^{\wedge}_{t_0}-\pi^+\|,
\end{equation*}
where the last inequality follows from the fact that for any two  measures $\mu,\nu$ and any function $f$, 
\begin{equation*}
 |\mu(f)-\nu(f)|\leq 2\|f\|_{\bbL_\infty}\|\mu-\nu\|\,.
\end{equation*}
Hence, by Lemma \ref{lemma:peq} and the fact that $\gap^{-1}\gg t_0$ (cf.\ \eqref{5/2}):
\begin{equation}\label{oyy}
 g(\wedge)\leq \frac{\pi^+(g)}{e^{-t_0\gap}-2\|\nu^{\wedge}_{t_0}-\pi^+\|}=\pi^+(g)(1+o(1)).
\end{equation}
Moreover, by symmetry and Jensen's inequality
\begin{equation*}
1=\pi(g^2)\geq 2\sum_{\eta \in \gO^+} \pi(\eta) g(\eta)^2=2\pi(\gO^+)\pi^+(g^2)\geq 2\pi(\gO^+)\pi^+(g)^2\,,
\end{equation*}
so that
\begin{equation*}
 \pi^+(g)\leq (2\pi(\gO^+))^{-1/2}=1+o(1).
\end{equation*}
Therefore $g(\wedge)\leq 1+o(1)$ (and it is trivial to notice that $\|g\|_{\bbL_\infty}\geq \|g\|_{\bbL_2(\pi)}=1$).
\\

We turn to the proof of \eqref{lastp}.
First notice that by \eqref{oyy} one has $\pi^+(g)\geq (1+o(1)) g(\wedge)\geq 1+o(1)$ so that
\begin{equation}\label{pipiug}
\pi^+(g)= 1+o(1).
\end{equation}
Next, we prove that the variation of $g$ within $\gO^{\pm}$ is small.
Let $\mathcal L^+$ be the generator of the Markov chain restricted to $\gO^+$, as in Proposition \ref{muretto}. 
The associated Dirichlet form $\mathcal E^+$ is, for a function $f\in \bbL_2(\pi^+)$,
\begin{equation*} 
\mathcal E^+(f,f)=\frac{1}{2}\sum_{\eta,\eta'\in \gO^+}\pi^+(\eta) c(\eta,\eta')[f(\eta')-f(\eta)]^2,
\end{equation*}
where $ c(\eta,\eta')$, $ \eta,\eta'\in \gO^+$, denote the transition rates, which coincide with those of $\mathcal L$. 
If $L$ is large enough, $\gap\leq L^{-5/2+\gep}$ by \eqref{5/2}, while 
$\gap^+\geq L^{-2-\gep}$ by Proposition \ref{muretto},  so that
\begin{multline*}
 L^{-5/2+\gep}\geq  \gap =\mathcal E(g)\geq \pi(\gO^+) \mathcal E^+(g_{|\gO^+})\\
\geq \gap^+\pi(\gO^+)\var_{\pi^+}(g_{|\gO^+})\geq \frac{1}{4}L^{-2-\gep}\var_{\pi^+}(g_{|\gO^+})\,,\qquad\quad\quad\quad\quad\;\;
\end{multline*}
 where
we let $g_{|_{\Omega^+}}:=g \ind_{\Omega^+}$.
Therefore, one has
\begin{equation}\label{varisml}
\var_{\pi^+}(g_{|\gO^+})\leq 4 L^{-1/2+2\gep}\,,
\end{equation}
and the same is true for $\var_{\pi^-}(g_{|\gO^-})$, by antisymmetry.

Next,
\begin{equation*}
 \|g-\ind_{\gO^+}+\ind_{\gO-}\|_{\bbL_1(\pi)}\leq \|\ind_{\gO^+}(g-1)\|_{\bbL_1(\pi)}+\|\ind_{\gO^-}(g+1)\|_{\bbL_1(\pi)}+\|g\ind_{\gO\setminus(\gO^+\cup \gO^-)}\|_{\bbL_1(\pi)}\,.
\end{equation*}
The first two terms of the right-hand side are equal by symmetry. Adding and subtracting $\pi^+(g)$, and using Schwarz' inequality,
\begin{equation*}
  \|\ind_{\gO^+}(g-1)\|_{\bbL_1(\pi)}\leq \pi(\gO^+)\left[|\pi^+(g)-1|+ \sqrt{\var_{\pi^+}(g_{|\gO^+})}\right]=o(1)\,,
\end{equation*}
where the conclusion follows from \eqref{pipiug} and \eqref{varisml}. 
The third term $\|g\ind_{\gO\setminus(\gO^+\cup \gO^-)}\|_{\bbL_1(\pi)}$ is smaller than $\|g\|_{\bbL_\infty}\pi(\gO\setminus(\gO^+\cup \gO^-))=o(1)$.
\end{proof}

\section{Metastability}\label{metastab}
In this section we first prove Theorem \ref{th4}, which is mainly a consequence of the technical lemmas of the previous section and then move to the proof of  Theorem \ref{th3} and its corollary.

\subsection{Proof of Theorem \ref{th4}}


We use 
 the notation $T=L^{2+\gd}$.
Equation \eqref{tmixhat} is an easy consequence of \eqref{PTT}.
Indeed, assuming \eqref{PTT}, for $t\ge T$, one has 
\begin{multline}
 \| P_t(\wedge,\cdot)-\pi\|=  \| \Big[\frac{1+e^{-t/\Tr}}{2}\pi^+ +\frac{1-e^{-t/\Tr}}{2}\pi^-\Big]  -\pi\|+o(1)
=\frac{1}{2}e^{-t/\Tr}+o(1).
\end{multline}
To prove the rest of the result, one first shows that proving \eqref{PTT} reduces to prove \eqref{PTT2}:
\begin{multline}
\Big\| \nu_t^{\wedge}-\Big[\frac{1+e^{-t/\Tr}}{2}\pi^+ + \frac{1-e^{-t/\Tr}}{2} \pi^-\Big]\Big\|\\
\leq \left\| \nu_T^{\wedge}P_{t-T}-\pi^+P_{t-T}\right\|
+\Big\|\pi^+P_{t-T}- \Big[\frac{1+e^{-t/\Tr}}{2}\pi^+ + \frac{1-e^{-t/\Tr}}{2} \pi^-\Big] \Big\|
\,.\qquad
\label{secondt}
\end{multline}
The inequality is just triangular inequality, combined with the observation that $\nu_T^{\wedge}P_{T-t}=\nu_t^{\wedge}$.
The first term on the right hand side is smaller than $\left\| \nu_T^{\wedge}-\pi^+\right\|$ (as $P_{T-t}$ contracts the norm) which is itself small, by Lemma \ref{lemma:peq} and the definition of $T$. It remains to estimate the second term i.e.\ to prove \eqref{PTT2}.

To do this, we use the fact that the density of
$\pi^+$ w.r.t.\ $\pi$ is very close to $g$, the eigenfunction described in Proposition \ref{mainvect}, so that the density of $\pi^+ P_t$ must be close to $P_t g$.
 Using reversibility, one can express the densities as follows:
 $\frac{\dd \pi^+ P_t}{\dd \pi}=P_t \frac{\dd \pi^+}{\dd \pi}$.
Then we rewrite the second term in \eqref{secondt} as an $\bbL_1$ norm (omitting a harmless factor $1/2$)
\begin{multline*}
\Big\|P_{t-T} \frac{\dd \pi^+}{\dd \pi}- \frac{1}{2\pi(\gO^+)}\left(\ind_{\gO^+} + \ind_{\gO^-} \right)-\frac{1}{2\pi(\gO^+)}\,e^{-t/\Tr}\left(\ind_{\gO^+} - \ind_{\gO^-}\right)\Big\|_{\bbL_1(\pi)}\\
\leq \Big\| P_{t-T} \frac{\dd \pi^+}{\dd \pi}- \frac{1}{2\pi(\gO^+)}\left(\ind_{\gO^+}+\ind_{\gO^-}\right)- \frac{1}{2\pi(\gO^+)}\,e^{-t/\Tr}g\Big\|_{\bbL_1(\pi)} \\
+  \frac{1}{2\pi(\gO^+)}\,\|e^{-t/\Tr}\left(\ind_{\gO^+} - \ind_{\gO^-}-g\right)\|_{\bbL_1(\pi)}.
\end{multline*}
The last term above is small by Proposition \ref{mainvect}.
From \eqref{pioplusminus} we know that $2\pi(\gO^+)=1+o(1)$. 
One can then estimate the first term 
\begin{multline}\label{hepep}
 \Big \| P_{t-T} \frac{\dd \pi^+}{\dd \pi}- \frac{1}{2\pi(\gO^+)}\left(\ind_{\gO^+}+\ind_{\gO^-}\right)- \frac{1}{2\pi(\gO^+)}\,e^{-t/\Tr}g\Big\|_{\bbL_1(\pi)}\\ \le
 \| \left(\ind_{\gO^+}+\ind_{\gO^-}\right)-\ind \|_{\bbL_1(\pi)}+\Big\|P_t \Big(\frac{\dd \pi^+}{\dd \pi}-\ind-g\Big)\Big\|_{\bbL_1(\pi)}
+ o(1)\,,\end{multline}
where we used the triangular inequality, the fact that $P_t \ind=\ind$, and 
$$P_{t-T} g=e^{-(t-T)/\Tr}g
= (e^{-t/\Tr}+o(1))g\,,$$
which follows from $T=o(\Tr)$. On the right hand-side of \eqref{hepep}, the first term is small by \eqref{piplusminus}
and the second is bounded by $\|\frac{\dd \pi^+}{\dd \pi}-\ind-g\|_{\bbL_1(\pi)}$, which is small by Proposition \ref{mainvect}.
\qed

\subsection{Proof of Theorem \ref{th3}}
%
Theorem \ref{th4} gives some intuition on why the result should be true, and it will be used to determine the time of the jump from one state to the other.
However, one needs another key ingredient to get the result, namely
the description of the quasi-stationary distribution.
The reason for this is that starting from the quasi-stationary distribution, a killed process dies exactly at exponential rate; see Section \ref{killing}. Therefore, most of our effort will focus on stochastic comparison with quasi-stationary distribution. 
Let us first give a brief roadmap to help the reader
through the proof of
Theorem \ref{th3}.
\\

\noindent
{\em Step 1}. The sets $S^\pm$ of Theorem  \ref{th3} for which we have the desired exponential hitting time description 
are constructed by successively refining a first attempt.
One first defines $S^{0,\pm}$ as the sets of polymer configurations where the
eigenfunction $g$ in Proposition \ref{mainvect} is positive
(negative) and one verifies that their equilibrium probability is
$\frac 12
+o(1)$. Then one examines  the Dirichlet problem associated to the process killed in $S^{0,-}$
($S^{0,+}$) and one proves that the corresponding
eigenvalue $\gamma_0$ is of the same order as the spectral gap apart from a
crucial unspecified multiplicative factor in $[1/2,1]$. Similarly one verifies that the corresponding quasi-stationary
measure is very close to $\pi^\pm$, the equilibrium
measure $\pi$ conditioned to be in $S^{0,+}$ ($S^{0,-}$). In this way
we get the exponentiality of the hitting time of e.g. $S^{0,-}$ starting
from $\pi^+$ with a rate which is, modulo a multiplicative factor in
$[1/2,1]$, the
spectral gap (see Lemma \ref{atlantos}).   
\\

\noindent
{\em Step 2}.
Next one appropriately defines new sets $S^{1,\pm} \subset S^{0,\pm}$ 
in order to guarantee that this time the corresponding Dirichlet eigenvalue 
$\gamma_1$ is equal to $(\frac 12+o(1)) \gap$, and that the
hitting time of $S^{1,\mp}$ starting from
equilibrium conditioned to $S^{1,\pm}$
is exponential (with the correct rate). Again one of the key points is to
show that  $\pi^+$ is close to the quasi-stationary distribution associated the
process killed on entering $S^{1,-}$, and that the equilibrium
probability of $S^{1,\pm}$ is still $\frac 12+o(1)$.
\\

\noindent
{\em Step 3}.
Finally, one defines the final sets $S^{2,\pm} \subset S^{1,\pm}$ in
such a way that: a) the hitting time of $S^{2,\mp}$ starting from
\emph{any} configuration in $S^{2,\pm}$ (and not just from the
conditional equilibrium) 
is also exponential with the correct rate $\frac 12 \gap$; b) the equilibrium
probability of $S^{2,\pm}$ is still $\frac 12 +o(1)$. 
\\

\noindent
It is now time to begin the implementation of the above strategy. Let 
\begin{equation}
\label{eq:S0}
S^{0,+}:=\left\{ \eta\in \O, \ g(\eta)>0\right\},
\end{equation} 
where $g$ is the eigenfunction defined by Proposition \ref{mainvect} and $S^{0,-}\equiv-S^{0,+}$.
From Proposition~\ref{mainvect},
\begin{equation}\label{clo}
\| \ind_{S^{0,+}}-\ind_{\gO^+}\|_{\bbL_1(\pi)}=o(1).
\end{equation}
In particular, $\pi(S^{0,\pm})=1/2+o(1)$.
Let $S^{-}\subset S^{0,-}$ be a decreasing event.
We consider the quasi-stationary distribution $\nu^{+}:=\nu_{S^-}$ of the process 
killed when it hits $S^{-}$. Let $P_t^{*}=P_t^{S^-}$, resp.\
 $\mathcal L^{*}=\cL^{S^-}$, denote the semi-group, resp.\ the generator, associated to this process
 (see Section \ref{killing}),  
$-\g_{S^{-}}=-\gga$ be the largest eigenvalue of $\mathcal L^{*}$ and 
$\tau^-=\t_{S^-}$ be the  hitting time of $S^{-}$. From \eqref{killedp6}:
\begin{equation}\label{qsd}
 \bbP^{\nu^+}(\tau^->t)=e^{-\gga t}.
\end{equation}
Our first step is to prove that if $S^{-}$ has non-negligible measure, then $\gga$ is of the same order of the $\gap$. More precisely:

\begin{lemma} \label{wesh}
For any $ S^- \subset S^{0,-}$, one has 
$\pi(S^{-}) \leq \gga \,\Tr \leq 1 $.
\end{lemma}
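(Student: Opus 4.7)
The two inequalities are proved by different test functions in the respective Rayleigh-type variational principles: the lower bound uses the principal Dirichlet eigenfunction $g_{S^-}$ as a test function for the spectral gap, while the upper bound uses the positive part $g_+ := \max(g,0)$ of the antisymmetric eigenfunction from Proposition \ref{mainvect} as a test function for $\gamma$.

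\textbf{Lower bound $\pi(S^-)\le \gamma\,T_{\rm rel}$.} Extend the principal eigenfunction $h:=g_{S^-}$ of $\mathcal L^{S^-}$ by $0$ on $S^-$, so that $\mathcal E(h,h)=\gamma\,\pi(h^2)$. Since the constant function is irrelevant in the Dirichlet form, $h-\pi(h)\mathbf 1$ is an admissible test function for the gap, so
\[
\gap \le \frac{\mathcal E(h,h)}{\mathrm{Var}_\pi(h)}=\frac{\gamma\,\pi(h^2)}{\pi(h^2)-\pi(h)^2}\,,
\]
which rearranges to $\gamma\ge \gap\,(1-\pi(h)^2/\pi(h^2))$. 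Since $h$ is supported on $(S^-)^c$, Cauchy--Schwarz gives $\pi(h)^2\le \pi((S^-)^c)\,\pi(h^2)$, hence $\pi(h)^2/\pi(h^2)\le 1-\pi(S^-)$ and the conclusion follows.

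\textbf{Upper bound $\gamma\,T_{\rm rel}\le 1$.} Since $S^-\subset S^{0,-}=\{g<0\}$, the function $g_+$ vanishes on $S^-$ and is admissible in the variational formula \eqref{killedp3}. The point is to exploit the antisymmetry $g(-\eta)=-g(\eta)$, together with the symmetries $\pi(-\eta)=\pi(\eta)$ and $c(-\eta,-\eta')=c(\eta,\eta')$ of the polymer dynamics. These yield $\pi(g_+^2)=\pi(g_-^2)=\tfrac12\,\pi(g^2)=\tfrac12$ and, via the change of variables $\eta\to -\eta$, $\mathcal E(g_+,g_+)=\mathcal E(g_-,g_-)$. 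A pointwise inspection of each jump $(\eta,\eta')$ shows
\[
(g(\eta')-g(\eta))^2\ge (g_+(\eta')-g_+(\eta))^2+(g_-(\eta')-g_-(\eta))^2\,,
\]
with strict inequality only when $g$ changes sign across the transition, in which case the excess equals $2\,|g(\eta)g(\eta')|\ge 0$. Summing against $\pi(\eta)c(\eta,\eta')$ gives $\mathcal E(g_+,g_+)+\mathcal E(g_-,g_-)\le \mathcal E(g,g)=\gap$, hence $\mathcal E(g_+,g_+)\le \gap/2$. Therefore
\[
\gamma \le \frac{\mathcal E(g_+,g_+)}{\pi(g_+^2)}\le \frac{\gap/2}{1/2}=\gap\,.
\]

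\textbf{Main conceptual point.} The lower bound is the standard comparison of a Dirichlet eigenfunction with an admissible test function for the gap, and requires no structural information beyond $S^-$ being a strict subset. The upper bound, on the other hand, would fail in general and depends crucially on the fact that $S^-$ lies inside the nodal set $\{g<0\}$ of the antisymmetric gap eigenfunction: this is precisely what makes $g_+$ admissible, and the factor $1/2$ saved in both $\pi(g_+^2)$ and $\mathcal E(g_+,g_+)$ (via the $\eta\leftrightarrow -\eta$ symmetry) is exactly what closes the inequality.
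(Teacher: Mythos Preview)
Your proof is correct and follows the same overall approach as the paper: the lower bound is identical (test the gap with the Dirichlet eigenfunction $h=g_{S^-}$, then Cauchy--Schwarz on $\pi(h)^2$), and for the upper bound both you and the paper use the very same test function $g_+=g\,\mathbf 1_{\{g\ge 0\}}$ in the variational principle for $\gamma$.

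The only difference is in how the inequality $\mathcal E(g_+,g_+)\le \gap\,\pi(g_+^2)$ is verified. The paper argues pointwise on $(S^{0,-})^c$ that
\[
-(\mathcal L^{S^{0,-}} g_+)(\eta)=-(\mathcal L g)(\eta)+\sum_{\eta'\in S^{0,-}}c(\eta,\eta')\,g(\eta')\le \gap\,g(\eta),
\]
since $g<0$ on $S^{0,-}$, and then multiplies by $g_+\ge 0$ and integrates. This uses only that $g$ is an eigenfunction with $S^{0,-}\subset\{g<0\}$; antisymmetry is not invoked. Your route instead splits $\mathcal E(g,g)\ge \mathcal E(g_+,g_+)+\mathcal E(g_-,g_-)$ and uses the $\eta\leftrightarrow -\eta$ symmetry of $\pi$ and of the rates to match the two pieces. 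Both computations are short and lead to the same conclusion; the paper's is marginally more robust (it would survive without the reflection symmetry), while yours makes the role of antisymmetry completely explicit.
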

\proof
 The bound $\pi(S^{-}) \leq \gga \,\Tr$ 
 is rather standard, but we include its proof for the sake of completeness. 
Let $f_0=g_{S^-}$ denote the minimizer in the variational principle defining $\g=\g_{S^-}$; see \eqref{killedp3}. Then
\begin{equation*}
 \var_{\pi} (f_0)=\ll f_0,f_0  \rr_{\pi}- \ll f_0,\ind_{(S^{-})^c} \rr_{\pi}^2 \geq \ll f_0,f_0  \rr_{\pi}\pi(S^-)\,,
\end{equation*}
where we used the Cauchy-Schwarz inequality for $\ll f_0,\ind_{(S^{-})^c} \rr_{\pi}^2$.
Therefore
\begin{equation*}
\gga=\frac{\cE(f_0,f_0)}{\pi(f_0^2)}
\geq \pi(S^-)\,\frac{\cE(f_0,f_0)}{\var_\pi(f_0)}
\geq \pi(S^-)\,\gap\,.
\end{equation*}

 As for the bound $\gga \,\Tr \leq 1 $, 
 $\gga_{S^-}$ being a non-decreasing function of $S^-$ (for the inclusion), it is sufficient to prove the result for the maximal case $S^-=S^{0,-}$. 
Let $g$ be the eigenfunction defined in Proposition \ref{mainvect}.
From \eqref{killedp2}, for all $\eta\in (S^{0,-})^c$
\begin{equation*}
 -(\mathcal L^{*} g_{|_{(S^{0,-})^c}})(\eta)= -(\mathcal L g)(\eta)+\sum_{\eta' \in S^{0,-}}c(\eta,\eta') g(\eta')\leq -(\mathcal L g)(\eta)=\gap\  g(\eta)\,,
\end{equation*}
where we use the fact that $g(\eta')<0$ for $\eta' \in S^{0,-}$.
Plugging this into \eqref{killedp3}, and using $g_{|_{(S^{0,-})^c}}\geq 0$, one gets
\begin{equation*}
 \gga
 \leq \frac{\ll - \mathcal L^{*}g_{|_{(S^{0,-})^c}},g_{|_{(S^{0,-})^c}}
\rr_{\pi}}{
\pi\big(g_{|_{(S^{0,-})^c}}^2\big)
}
\leq \gap\,.
\end{equation*}
\qed

\medskip
Next, we prove that the quasi-stationary distribution $\nu^+$ for the process killed on $S^{-}$ is very close to $\pi^+$ if $S^{-}$ has probability close to $1/2$.

\begin{lemma}\label{wwesh}
Uniformly for all  decreasing events $S^-\subset S^{0,-}$,
\begin{equation*}
 \|\nu^{+} - \pi^+\|\leq(2-4\pi(S^-))+o(1)\,.
\end{equation*}
\end{lemma}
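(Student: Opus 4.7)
The structure I would adopt is to prove the bound by the triangle inequality
\[
\|\nu^+ - \pi^+\| \;\le\; \|\nu^+ - \pi(\cdot\tc (S^-)^c)\| \;+\; \|\pi(\cdot\tc(S^-)^c) - \pi^+\|,
\]
and to bound each summand by $(1 - 2\pi(S^-)) + o(1)$. The first key observation is that, since $S^-$ is a decreasing event, the killed heat-bath dynamics is still monotone: in the grand monotone coupling one has $\tau_{S^-}^{\eta}\le\tau_{S^-}^{\eta'}$ whenever $\eta\le\eta'$, so $P_t^*$ maps increasing non-negative functions vanishing on $S^-$ into themselves. By Perron--Frobenius applied to $P_t^*$, this forces the principal eigenfunction $h=g_{S^-}$ (extended by $0$ on $S^-$) to be increasing on $\Omega$, and hence $\mathrm{d}\nu^+/\mathrm{d}\pi = h/\pi(h)$ is an increasing density.

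The second summand is essentially a computation with restricted measures. Using \eqref{clo} from Proposition \ref{mainvect} together with $S^-\subset S^{0,-}$, one obtains $\pi(\Omega^+ \cap S^-)=o(1)$ and $\pi(\Omega^+\triangle (S^-)^c) \le (1/2 - \pi(S^-)) + o(1)$. Expanding the TV distance between the two restrictions of $\pi$ to $\Omega^+$ and to $(S^-)^c$ then gives (after elementary manipulation, using $\pi(\Omega^+)=1/2+o(1)$ and $\pi((S^-)^c)=1-\pi(S^-)$)
\[
\|\pi(\cdot\tc(S^-)^c)-\pi^+\| \;\le\; \frac{1/2 - \pi(S^-)}{1-\pi(S^-)} + o(1) \;\le\; 1 - 2\pi(S^-) + o(1).
\]

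The first summand is the delicate one. My plan is to test the variational principle \eqref{killedp3} for $\gamma=\gamma_{S^-}$ with the trial function $g_+ := \max(g,0)$, which vanishes on $S^{0,-}\supset S^-$ and satisfies $\|g_+ - \mathbf{1}_{\Omega^+}\|_{L^1(\pi)}=o(1)$ by Proposition \ref{mainvect}. An eigenfunction expansion of the increasing density $h/\pi(h)$ against the orthonormal basis of $\mathcal L^*$-eigenfunctions $\{h_k\}$ with eigenvalues $-\gamma_k$, together with the identity
\[
\Big\|\frac{h}{\pi(h)} - \frac{\mathbf{1}_{(S^-)^c}}{\pi((S^-)^c)}\Big\|_{L^2(\pi)}^2 \;=\; \frac{1}{\pi(h)^2} - \frac{1}{\pi((S^-)^c)},
\]
reduces the task to lower-bounding $\pi(h)^2$ by $\pi((S^-)^c) - o(1)$. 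This lower bound is obtained from Cauchy--Schwarz $\pi(h)^2 \ge \pi(h\mathbf{1}_{(S^-)^c})^2/\pi((S^-)^c)$, combined with FKG applied to the two increasing functions $h$ and $\mathbf{1}_{(S^-)^c}$ and with the comparison $h \sim g_+ \sim \mathbf{1}_{\Omega^+}$ in $L^1(\pi)$ coming from the trial-function computation.

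The main obstacle is precisely the last point: turning the $L^1$ approximation $g_+ \approx \mathbf{1}_{\Omega^+}$ (which refers to the full-chain eigenfunction) into an $L^2$ statement about the killed-chain eigenfunction $h$, without any a priori quantitative lower bound on the spectral gap $\gamma_2-\gamma_1$ of $\mathcal L^*$. The route around this is to exploit monotonicity (via the increasing nature of $h$ and FKG) rather than a purely spectral-gap argument, so that the comparison only costs $o(1) + $ error proportional to $\pi(S^{0,-}\setminus S^-) \le 1/2 - \pi(S^-) + o(1)$. Summing the two bounded summands yields $\|\nu^+-\pi^+\| \le (2-4\pi(S^-)) + o(1)$, as required.
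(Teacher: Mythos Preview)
Your decomposition via the triangle inequality, your bound on the second summand, and your observation that the quasi-stationary density $h/\pi(h)$ is increasing all match the paper. The gap is in your treatment of the first summand $\|\nu^+-\pi(\cdot\tc(S^-)^c)\|$.

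The Cauchy--Schwarz you write, $\pi(h)^2\ge\pi(h\ind_{(S^-)^c})^2/\pi((S^-)^c)$, is vacuous: since $h$ vanishes on $S^-$ one has $h\ind_{(S^-)^c}=h$, so the inequality reads $\pi(h)^2\ge\pi(h)^2/\pi((S^-)^c)$, which is false. More importantly, the whole strategy of lower-bounding $\pi(h)$ from monotonicity and FKG alone cannot succeed. An increasing nonnegative function with unit $\bbL_2(\pi)$ norm can have arbitrarily small $\pi$-mean (take $h=\ind_{\{\wedge\}}/\sqrt{\pi(\wedge)}$), so neither FKG nor the trial-function inequality $\gamma\le\cE(g_+,g_+)/\pi(g_+^2)$ gives any control on where the actual killed eigenfunction $h$ sits; you would genuinely need a gap between $\gamma_1$ and $\gamma_2$ for $\cL^*$, which you correctly flag as unavailable. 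Your proposed ``route around'' via FKG therefore does not close.

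The paper bypasses this by a \emph{dynamic} comparison rather than a static one. Since the density $\dd\nu^+/\dd\pi$ is increasing, there is an increasing event $A$ with $\|\nu^+-\pi(\cdot\tc(S^-)^c)\|=\nu^+(A)-\pi(A\tc(S^-)^c)$. One then shows $\nu^+(A)\le\pi^+(A)+o(1)$ in three steps: (i) quasi-stationarity gives $\|\nu^+-\nu^+P_T\|\le 1-e^{-\gamma T}=o(1)$ for $T=L^{2+\gd}$, using Lemma~\ref{wesh} and $\trel\gg T$; (ii) since $\dd\nu^+/\dd\pi$ is increasing, $\nu^+$ is stochastically below $\gd_\wedge$, hence $\nu^+P_T(A)\le\gd_\wedge P_T(A)$; (iii) Lemma~\ref{lemma:peq} gives $\|\gd_\wedge P_T-\pi^+\|=o(1)$. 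This yields $\|\nu^+-\pi(\cdot\tc(S^-)^c)\|\le\|\pi^+-\pi(\cdot\tc(S^-)^c)\|+o(1)$, and summing with the second summand produces $2-4\pi(S^-)+o(1)$. The indispensable input you are missing is the relaxation-within-one-phase result Lemma~\ref{lemma:peq}.
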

\begin{proof}
We use triangular inequality to get
\begin{equation}\label{above}
 \|\nu^{+} - \pi^+\|\leq  \|\nu^{+} - \pi( \cdot \ | (S^{-})^c)\|+\| \pi^+-\pi( \cdot \ | (S^{-})^c)\|.
\end{equation}
We start with the first term. 
First, from \eqref{killedp5} one has
the characterization
\begin{equation*}
  \nu^+=\lim_{t\to \infty} \frac{\gd_{\wedge} P_t^{*}}{\gd_{\wedge} P_t^{*}(\gO)}\,.
 \end{equation*}
Since the operator $P_t^{*}$ preserves monotonicity ($S^-$ is decreasing), arguing as in \cite[Lemma 16.6]{notePeres}, the density $\frac{\dd[ \gd_{\wedge} P_t^{*}]}{\dd \pi}$ is seen to be an increasing function
for every fixed $t\geq 0$.  Hence, passing to the limit $t\to\infty$, $\dd\nu^+/\dd \pi$ is an increasing function.
Therefore, 
\begin{equation*}
 A:=\left\{\eta\in (S^-)^c, \text{ such that } \frac{\nu^+(\eta)\pi((S^-)^c)}{\pi(\eta)}>1\right\}
\end{equation*}
is an increasing event. From standard properties of the total variation distance
\begin{equation*}
 \| \nu^+- \pi(\cdot | (S^-)^c\|= \nu^+(A)- \pi(A\ | (S^-)^c).
\end{equation*}
We shall prove that $\nu^+(A)$ is smaller than $\pi^+(A)+o(1)$ by the use of monotonicity and a chain of comparisons. 
Recall the notation $T=L^{2+\gd}$ ($\gd\in(0,1/4)$). We first compare
$\nu^+$ to $\nu^+P_T$: remark that
\begin{equation*}
 \nu^+P_T=\nu^+ P^{*}_T+\nu^+ (P_T-P^{*}_T)
\end{equation*}
where the two terms of the decomposition are positive measures. From quasi-stationarity one has $\nu^+ P^{*}_T=e^{-\gga T}\nu^+$ and therefore the total mass of the second term above is $1-e^{-\gga T}$. Hence
\begin{equation}\label{423}
\| \nu^+P_T-\nu^+\|=\frac{1}{2}\Big
\|\frac{\dd [\nu^+ (P_T-P^{*}_T)]}{\dd \pi}-\frac{\dd \nu^+}{\dd \pi}(1-e^{-\gga T})\Big\|_{\bbL_1(\pi)}\!\!\leq 1-e^{-\gga\, T}=o(1).
\end{equation}
The last equality comes from Lemma \ref{wesh} and the fact that $\trel\gg T$.
Next, from Lemma \ref{density}, $\gd_{\wedge}P_T$ 
stochastically dominates $\nu^+P_T$ so that $[\nu^+P_T](A)\leq [\gd_{\wedge}P_T](A)$.
Hence, from Lemma \ref{lemma:peq} and \eqref{423}:
\begin{align*}
 \nu^+(A)&\leq \nu^+P_T(A) +o(1)\le \gd_{\wedge}P_T(A) +o(1)\nonumber\\
 & \leq \pi^+(A) +
 \|\gd_{\wedge}P_T-\pi^+\|+o(1)=\pi^+(A)+o(1).
\end{align*}
Therefore, going back to \eqref{above} 
\begin{align}\label{minus1}
 \|\nu^{+} - \pi^+\|&
 \leq \nu^+(A)- \pi(A\ | (S^-)^c) + 
 \| \pi^+-\pi( \cdot \ | (S^{-})^c)\| \\
& \leq \pi^+(A)- \pi(A\ | (S^-)^c) + 
 \| \pi^+-\pi( \cdot \ | (S^{-})^c)\| +
 o(1) \leq 2  \| \pi^+-\pi( \cdot \ | (S^{-})^c)\|+o(1)\,.
\nonumber
\end{align}
To estimate the right-hand side of \eqref{minus1}, notice that
\begin{equation}\label{minus2}
 \| \pi^+-\pi( \cdot \ | (S^{-})^c)\|\leq   \| \pi^+-\pi( \cdot \ | (S^{0,-})^c)\|+\| \pi(\cdot \ | (S^{0,-})^c)-\pi( \cdot \ | (S^{-})^c)\|\,,
\end{equation}
and the first term is $o(1)$ by Proposition \ref{mainvect}.
Moreover, since $S^{-}\subset S^{0,-}$
\begin{align}\label{minus3}
  \| \pi(\cdot \ | (S^{0,-})^c)-\pi( \cdot \ | (S^{-})^c)\|&=\frac{\pi((S^{-})^c)-\pi(( S^{0,-})^c)}{\pi((S^{-})^c)}\nonumber\\
&=\frac{1/2-\pi(S^-)+o(1)}{1-\pi(S^-)}\leq 1-2\pi(S^-)+o(1)\,.
\end{align}
Combining \eqref{minus1}, \eqref{minus2} and \eqref{minus3}, the desired result follows.
\end{proof}

Now one uses the fact that $\nu^+$ and $\pi^+$ are close in total variation distance to estimate the jumping time to $S^-$ starting from either $\wedge$ or from $\pi^+$. For the rest of this section, one defines, in analogy with $\tau^-$, the hitting times 
$\tau^{i,-}$ (resp. $\tau^{i,+}$), $(i=0,1,2)$ of the sets $S^{i,-}$ (resp. $S^{i,+}$) to be defined.

\begin{lemma}\label{atlantos}
 Uniformly for all $t\geq 0$ and all decreasing $S^{-}\subset S^{0,-}$, setting $\g=\g_{S^-}$:
\begin{itemize}
\item[(i)] $|\bbP^{\wedge}\left[\tau^->t\right]-e^{-\gga t}|\le(2-4\pi(S^-))+o(1)$
\item[(ii)] $|\bbP^{\pi^+}\left[\tau^->t\right]-e^{-\gga t}|\le(2-4\pi(S^-))+o(1)$
\end{itemize}
In particular, for $S^{-}=S^{0,-}$, setting $\g_0=\g_{S^{0,-}}$:
\begin{itemize}
\item[(iii)] $\bbP^{\wedge}\left[\tau^{0,-}>t\right]=e^{-\gga_0 t}+o(1)$
\item[(iv)] $\bbP^{\pi^+}\left[\tau^{0,-}>t\right]=e^{-\gga_0 t}+ o(1)$
\end{itemize}
\end{lemma}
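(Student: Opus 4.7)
The plan is to reduce everything to the identity $\bbP^{\nu^+}(\tau^->t)=e^{-\gga t}$ from \eqref{killedp6}, exploiting the closeness of $\nu^+$ to $\pi^+$ established in Lemma \ref{wwesh}. Parts (iii) and (iv) will be immediate specializations of (i) and (ii): for $S^-=S^{0,-}$ we have $\pi(S^{0,-})=1/2+o(1)$ by \eqref{clo} and \eqref{pioplusminus}, so the error $2-4\pi(S^-)$ is $o(1)$.

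Part (ii) is the easier half. Writing $f_t(\eta):=\bbP^{\eta}(\tau^->t)=[P^*_t\ind](\eta)$, one has $\bbP^{\pi^+}(\tau^->t)=\pi^+(f_t)$ and $\bbP^{\nu^+}(\tau^->t)=\nu^+(f_t)=e^{-\gga t}$. Since $0\le f_t\le 1$, the standard estimate $|\mu(f)-\nu(f)|\le\|\mu-\nu\|$ together with Lemma \ref{wwesh} gives
\begin{equation*}
\bigl|\bbP^{\pi^+}(\tau^->t)-e^{-\gga t}\bigr|\le\|\pi^+-\nu^+\|\le(2-4\pi(S^-))+o(1).
\end{equation*}

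Part (i) requires more care. For the lower bound, I would first show that $f_t$ is an increasing function on $\gO$. Indeed, since $S^-$ is decreasing by hypothesis, $(S^-)^c$ is increasing, and the monotone coupling preserves the partial order at every time; hence for $\eta\le\eta'$ the event $\{v^{\eta}_s\in(S^-)^c\text{ for all }s\le t\}$ is contained in the analogous event for $\eta'$, almost surely. In particular $\bbP^{\wedge}(\tau^->t)=f_t(\wedge)=\max_\eta f_t(\eta)\ge\nu^+(f_t)=e^{-\gga t}$. For the upper bound, fix $T=L^{2+\gd}$ with $\gd>0$ small. When $t\le T$, Lemma \ref{wesh} and \eqref{5/2} give $\gga T\le T/\Tr=o(1)$, so $e^{-\gga t}=1-o(1)$; the lower bound already proved applied at time $T$ then forces $\bbP^{\wedge}(\tau^->t)=1-o(1)$ as well, and (i) holds trivially. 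When $t\ge T$, the Markov property yields
\begin{equation*}
\bbP^{\wedge}(\tau^->t)=\bbE^{\wedge}\!\left[\ind_{\tau^->T}\,f_{t-T}(\eta(T))\right]\le\nu^{\wedge}_T(f_{t-T}),
\end{equation*}
and by Lemma \ref{lemma:peq} together with $0\le f_{t-T}\le 1$, one has $\nu^{\wedge}_T(f_{t-T})\le\pi^+(f_{t-T})+o(1)$. Applying (ii) with $t$ replaced by $t-T$ and using $\gga T=o(1)$ to replace $e^{-\gga(t-T)}$ by $e^{-\gga t}+o(1)$ closes the upper bound.

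The main subtlety is the monotonicity of $f_t$ in the initial condition, which crucially uses the assumption that $S^-$ is decreasing; this is what delivers the clean two-sided bound $\bbP^{\wedge}(\tau^->t)\ge e^{-\gga t}$ for \emph{every} $t\ge0$. Without it, one would have to chain the Markov property in both directions through the killed semigroup, and the error terms could not be cleanly packaged into $(2-4\pi(S^-))+o(1)$. The rest of the argument is bookkeeping built around the three cited results: Lemma \ref{wwesh} (closeness of $\nu^+$ and $\pi^+$), Lemma \ref{wesh} (smallness of $\gga T$), and Lemma \ref{lemma:peq} (equilibration of $\nu^{\wedge}_T$ toward $\pi^+$ on the time scale $L^{2+\gd}$).
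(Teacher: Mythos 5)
Your proof is correct and follows essentially the same route as the paper's: part (ii) via the total-variation comparison of $\pi^+$ with $\nu^+$ from Lemma~\ref{wwesh} together with \eqref{killedp6}; the lower bound in (i) via monotonicity of $f_t$ (using that $S^-$ is decreasing); the reduction to $t\ge T$ via $\gga T=o(1)$ (Lemma~\ref{wesh} and \eqref{5/2}); the upper bound in (i) via the Markov property at time $T$ and Lemma~\ref{lemma:peq}; and (iii), (iv) by specializing to $\pi(S^{0,-})=\frac12+o(1)$. The only cosmetic difference is that in the upper bound of (i) you route through (ii) applied at time $t-T$, whereas the paper compares $\nu^\wedge_T$ directly to $\nu^+$ and invokes quasi-stationarity once more; these two bookkeepings are equivalent.
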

\begin{proof}
Item (ii) follows from \eqref{qsd} and Lemma \ref{wwesh}. Indeed,
\begin{equation*}
|\bbP^{\pi^+}\left[\tau^->t\right]-\bbP^{\nu^+}\left[\tau^->t\right]|\leq \|\pi^+-\nu_+\|\,.
\end{equation*}
For item (i) (lower bound), we use the fact that $S^-$ is a decreasing event to get that 
\begin{equation*}
 \bbP^{\wedge}\left[\tau^->t\right]\geq \bbP^{\nu^+}\left[\tau^->t\right]=e^{-\gga t}.
\end{equation*}
For the upper bound it is sufficient to prove the result for $t\geq T=L^{2+\d}$, since $\g^{-1}\gg T$
(a consequence of Lemma \ref{wesh} and  \eqref{5/2}). One defines
\begin{equation*}
\bar\tau^{-}=\inf\{t\geq T,\ \eta(t)\in S^-\}.
\end{equation*}
Then, by the Markov property and quasi-stationarity
\begin{multline}
 \bbP^{\wedge}\left[\tau^->t\right]\leq \bbP^{\wedge}\left[\bar\tau^->t\right]= \bbP^{\gd_\wedge P_T}\left[\tau^->t-T\right]\leq \|\gd_\wedge P_T-\nu^+\|+e^{-\gga (t-T)}\\
\leq \|\gd_\wedge P_T-\pi^+\|+\|\pi^+-\nu^+\| + e^{-\gga t}+ o(1)\,,
\end{multline}
where we use $e^{-\gga (t-T)}=e^{-\gga t}+ o(1)$, which follows from $\g^{-1}\gg T$. 
The result then follows from Lemma 
\ref{lemma:peq} and Lemma \ref{wwesh}.  
Items (iii) and (iv) are consequences of (i) and (ii) and the fact that $\pi(S^{0,-})=1/2+o(1)$.
\end{proof}

From the  previously stated results, one may conclude that there exists $\gd_L$, a decreasing sequence tending to zero when $L$ tends to infinity, such that for every $t>0$:
\begin{equation}\label{hypo}
\begin{split}
\bbP^{\wedge}\left[\tau^{0,-}>t\right]&\leq e^{-\gga_0 t}+\gd_L,\\
\bbP^{\pi^+}\left[\tau^{0,-}>t\right]&\geq e^{-\gga_0 t}-\gd_L,\\
\pi(\gO^+)&\geq 1/2-\gd_L\\
 \pi^+( S^{0,+})&\geq 1-\gd_L.\\
\end{split}
\end{equation}
Given such a $\gd_L$, one defines $S^{1,+}$ to be

\begin{equation*}
 S^{1,+}:=S^{0,+}\cap \{\eta\in \gO^+ \,:\;   \bbP^{\eta}\left[\tau^{0,-}>t\right]\geq e^{-t\gga_0}-3(\gd_L)^{1/4}\,,\;\,\forall t>0 \ \}
\end{equation*}
and $S^{1,-}\equiv -S^{1,+}$.
\begin{lemma}\label{SS1}
The set $S^{1,+}$ satisfies 
 \begin{equation}
\begin{split}
\label{notquite}
\pi(S^{1,+})&=\frac{1}{2}+o(1)\,,\quad \text{and}\\
\bbP^{\eta}\left[\tau^{0,-}>t\right]&= e^{-t\gga_0}+o(1)\,,\quad  \text{ uniformly in} \;t\geq 0\,,\; \text{and } \eta \in S^{1,+}.
\end{split}
\end{equation}
\end{lemma}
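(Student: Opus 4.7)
The second assertion of \eqref{notquite} comes essentially for free from the construction combined with monotonicity. By definition of $S^{1,+}$ the lower bound $\bbP^{\eta}[\tau^{0,-}>t]\geq e^{-\gga_0 t}-3\gd_L^{1/4}$ holds on $S^{1,+}$. For the matching upper bound, $g$ is increasing so $S^{0,-}=\{g<0\}$ is a decreasing event, hence $\tau^{0,-}$ is stochastically monotone in the initial condition under the global coupling and
$\bbP^{\eta}[\tau^{0,-}>t]\leq \bbP^{\wedge}[\tau^{0,-}>t]\leq e^{-\gga_0 t}+\gd_L$
uniformly in $\eta\in\gO$ and $t\ge 0$ by the first line of \eqref{hypo}. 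The sandwich gives $|\bbP^{\eta}[\tau^{0,-}>t]-e^{-\gga_0 t}|\leq 3\gd_L^{1/4}=o(1)$, uniformly in $\eta\in S^{1,+}$ and $t\geq 0$.

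The first assertion is the substantive part. Since $S^{1,+}\subset S^{0,+}\subset\gO^+$ and $\pi^+(S^{0,+})\ge 1-\gd_L$, it suffices to show that
\begin{equation*}
B:=\{\eta\in\gO^+\,:\,\exists\,t>0,\ \bbP^{\eta}[\tau^{0,-}>t]<e^{-\gga_0 t}-3\gd_L^{1/4}\}
\end{equation*}
has $\pi^+(B)=o(1)$. The plan is a Markov-type averaging at each fixed time $t_k$, combined with a discretization and union bound. Fix the mesh $\gep=\gd_L^{1/4}/\gga_0$, set $t_k=k\gep$, and put
\begin{equation*}
A_k=\{\eta\in\gO^+:\bbP^{\eta}[\tau^{0,-}>t_k]<e^{-\gga_0 t_{k-1}}-3\gd_L^{1/4}\}.
\end{equation*}
Since both $t\mapsto\bbP^{\eta}[\tau^{0,-}>t]$ and $t\mapsto e^{-\gga_0 t}$ are non-increasing, if $\eta\in B$ has witness $t\in(t_{k-1},t_k]$ then $\bbP^{\eta}[\tau^{0,-}>t_k]\le\bbP^{\eta}[\tau^{0,-}>t]<e^{-\gga_0 t}-3\gd_L^{1/4}\le e^{-\gga_0 t_{k-1}}-3\gd_L^{1/4}$, so $\eta\in A_k$. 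Moreover the defining inequality is vacuous once $e^{-\gga_0 t}<3\gd_L^{1/4}$, so only indices $k\leq T^\ast/\gep$ with $T^\ast=\gga_0^{-1}\log(1/(3\gd_L^{1/4}))$ contribute, and a pleasant cancellation gives $T^\ast/\gep=\log(1/(3\gd_L^{1/4}))/\gd_L^{1/4}=O(\gd_L^{-1/4}\log(1/\gd_L))$, independently of $\gga_0$.

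To bound each $\pi^+(A_k)$, combine the pointwise upper bound from the first paragraph, the averaged lower bound $\bbP^{\pi^+}[\tau^{0,-}>t_k]\ge e^{-\gga_0 t_k}-\gd_L$ from \eqref{hypo}, and the defining upper bound on $A_k$ to obtain
\begin{equation*}
e^{-\gga_0 t_k}-\gd_L\leq \pi^+(A_k)(e^{-\gga_0 t_{k-1}}-3\gd_L^{1/4})+(1-\pi^+(A_k))(e^{-\gga_0 t_k}+\gd_L).
\end{equation*}
The choice $\gep=\gd_L^{1/4}/\gga_0$ ensures $e^{-\gga_0 t_{k-1}}-e^{-\gga_0 t_k}\le\gd_L^{1/4}$, and rearranging gives $\pi^+(A_k)\le\gd_L^{3/4}$. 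Summing over the relevant indices yields $\pi^+(B)\leq O(\gd_L^{1/2}\log(1/\gd_L))=o(1)$, so that $\pi^+(S^{1,+})\geq 1-\gd_L-\pi^+(B)=1-o(1)$ and hence $\pi(S^{1,+})=\pi(\gO^+)\pi^+(S^{1,+})=1/2+o(1)$. The main obstacle is the calibration $\gep\sim\gd_L^{1/4}$: $\gep$ must be small enough that the rounding error $e^{-\gga_0 t_{k-1}}-e^{-\gga_0 t_k}$ stays strictly inside the $3\gd_L^{1/4}$ buffer built into the definition of $S^{1,+}$, yet large enough that the union bound over $k$ does not dissipate the per-$k$ gain $\gd_L^{3/4}$; the scaling is essentially forced by this trade-off, and the cancellation of $\gga_0$ noted above is what makes it work without any quantitative lower bound on $\gga_0$.
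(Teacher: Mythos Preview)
Your proof is correct and follows essentially the same approach as the paper: both discretize time with mesh $\gd_L^{1/4}/\gga_0$, use the averaged lower bound $\bbP^{\pi^+}[\tau^{0,-}>t]\ge e^{-\gga_0 t}-\gd_L$ against the pointwise upper bound from monotonicity to obtain a per-time Markov-type estimate of order $\gd_L^{3/4}$, and then take a union bound over the grid. The differences are cosmetic: the paper works with the ``good'' sets $S^{1,+,t}=\{\bbP^{\eta}[\tau^{0,-}>t]\ge e^{-\gga_0 t}-2\gd_L^{1/4}\}$ and performs the rounding to threshold $3\gd_L^{1/4}$ as a separate inclusion $\cap_i S_i\cap S^{0,+}\subset S^{1,+}$, whereas you work with the complementary ``bad'' sets $A_k$ and build the rounding directly into their definition via $t_{k-1}$ in the exponential; also, your cutoff $T^\ast=\gga_0^{-1}\log(1/(3\gd_L^{1/4}))$ is slightly sharper than the paper's $\gd_L^{-1/4}\gga_0^{-1}/2$, giving $O(\gd_L^{-1/4}\log(1/\gd_L))$ indices instead of $\lfloor\gd_L^{-1/2}\rfloor$, but either suffices. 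One minor slip: the chain of inclusions ``$S^{1,+}\subset S^{0,+}\subset\gO^+$'' is not quite right since $S^{0,+}=\{g>0\}$ need not be contained in $\gO^+$; however your argument only uses $S^{1,+}\subset\gO^+$ and $S^{1,+}\subset S^{0,+}$, both of which hold by definition, so this does not affect the proof.
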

\begin{proof}
The lower bound in the second point follows from the definition of $S^{1,\pm}$. 
For the upper
bound, it is just a consequence of the fact that 
\begin{equation}\label{mont}
\bbP^{\xi}\left[\tau^{0,-}>t\right]\leq \bbP^{\wedge}\left[\tau^{0,-}>t\right]\leq e^{-\gga_0 t} + \d_L\,,
\end{equation} for any $\xi\in\O$, by monotonicity, where the last bound follows from point $(iii)$ of Lemma \ref{atlantos}.  We turn to a proof of the first point.
For $t\geq 0$, one defines
\begin{equation*}
 S^{1,+,t}:=\{\eta\in \gO^+ :\;     \bbP^{\eta}\left[\tau^{0,-}>t\right]\geq e^{-t\gga_0}-2\gd_L^{1/4}\}.
\end{equation*}
From the second line in \eqref{hypo}, and using \eqref{mont} for all $\xi\in S^{1,+,t}$:
\begin{multline}
 e^{-\gga_0 t}-\gd_L\leq \bbP^{\pi^+}\left[\tau^{0,-}>t\right]
\leq \pi^+(S^{1,+,t})(e^{-\gga_0 t}+\gd_L)+(1-\pi^+(S^{1,+,t}))(e^{-\gga_0 t}-2\gd_L^{1/4})\,.
\end{multline}
This gives $-2\d_L \leq 2\gd_L^{1/4}\pi^+(S^{1,+,t})) -2\gd_L^{1/4} $, i.e.
\begin{equation}\label{hhop}
 \pi^+(S^{1,+,t})\geq 1-\gd_L^{3/4}.
\end{equation}
Next, define $S_i:=S^{1,+,i\gd_L^{1/4}\gga_0^{-1}}$, $i\in \bbN$. We claim that for all $L$ sufficiently large:
\begin{equation}\label{claims1}
 S^{1,+}\supset \bigcap_{i=1}^{\lfloor \gd_L^{-1/2}\rfloor} 
 S_i\cap S^{0,+}\,.
\end{equation}
Indeed, let $\eta \in \bigcap_{i=1}^{\lfloor \gd_L^{-1/2}\rfloor} S_i$.
If $t\leq \gd_L^{-1/4}\gga_0^{-1}/2$, then clearly $\eta\in S_{\lceil t\gga_0\gd_L^{-1/4}\rceil}$ if $L$ is large enough.
In particular,
\begin{multline}
\bbP^{\eta} [\tau^{0,-}>t]\geq \bbP^{\eta} [\tau^{0,-}>\lceil t\gga_0\gd_L^{-1/4}\rceil\gd_L^{1/4}\gga_0^{-1}]
\geq e^{-\gga_0 t-\gd_L^{1/4}}-2\gd_L^{1/4}\geq e^{-\gga_0 t}-3\gd_L^{1/4}.
\end{multline}
If on the other hand $t\geq  \gd_L^{-1/4}\gga_0^{-1}/2$, then
\begin{equation*}
e^{-t\gga_0}-3\gd_L^{1/4}\leq 0\leq \bbP^{\eta} [\tau^{0,-}>t],
\end{equation*}
provided $\gd_L$ is small enough, i.e. $L$ is large enough. This proves \eqref{claims1}.
Moreover one has, from \eqref{hhop} and the fourth line of 
\eqref{hypo},
\begin{equation*}
 \pi^+\Big(\cap_{i=1}^{\lfloor \gd_L^{-1/2}\rfloor}S_i\cap S^{0,+}\Big)\geq 1-\gd_L^{1/4}-\gd_L.
\end{equation*}
From the third line of \eqref{hypo}, one gets that 
\begin{equation*}
 \pi\Big(\cap_{i=1}^{\lfloor \gd_L^{-1/2}\rfloor}S_i\cap S^{0,+}\Big)
 \ge(1-\gd_L^{1/4}-\gd_L)(1/2-\gd_L)\,.
\end{equation*}
This last estimate together with \eqref{claims1} implies the first statement of the lemma.
\end{proof}

The previous results allow us to compute the  value of $\gga_1 = \g_{S^{1,-}}$.

\begin{lemma}\label{gamma1}
Let $-\gga_{1}$ be the largest eigenvalue of $\mathcal L^{S^{1,-}}$, the generator of the process killed when it hits $S^{1,-}$.
We have
\begin{equation*}
 \gga_{1}\, \Tr= 1/2+o(1).
\end{equation*}
\end{lemma}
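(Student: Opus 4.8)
The plan is to establish separately the bounds $\gamma_1\Tr\ge\tfrac12+o(1)$ and $\gamma_1\Tr\le\tfrac12+o(1)$. The lower bound will be immediate from Lemma \ref{wesh}: the event $S^{1,-}\subset S^{0,-}$ is decreasing, hence $\gamma_1\Tr=\gamma_{S^{1,-}}\Tr\ge\pi(S^{1,-})$, and $\pi(S^{1,-})=\pi(S^{1,+})=\tfrac12+o(1)$ by symmetry and the first statement of Lemma \ref{SS1}. All the work will be in the upper bound $\gamma_1\le(\tfrac12+o(1))\gap$.

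For the upper bound I would evaluate the occupation probability $\bbP^{\pi^+}[\eta(t)\in(S^{0,+})^c]$ in two complementary ways. On one side, \eqref{PTT2} gives $\|\nu^{\pi^+}_t-[\tfrac{1+e^{-t/\Tr}}{2}\pi^++\tfrac{1-e^{-t/\Tr}}{2}\pi^-]\|=o(1)$ uniformly in $t\ge0$; since $\pi^+((S^{0,+})^c)=o(1)$ and $\pi^-((S^{0,+})^c)=1+o(1)$ — both consequences of \eqref{clo} together with $\pi(\O^\pm)=\tfrac12+o(1)$ — this yields, uniformly in $t\ge0$,
\[
\bbP^{\pi^+}\big[\eta(t)\in(S^{0,+})^c\big]=\frac{1-e^{-t/\Tr}}{2}+o(1).
\]
On the other side, once the chain has entered $S^{1,-}$ it remains in $(S^{0,+})^c$ for a long time: by the symmetry $\eta\mapsto-\eta$ the second statement of Lemma \ref{SS1} reads $\bbP^{\xi}[\tau^{0,+}>v]=e^{-\gamma_0 v}+o(1)$, uniformly in $v\ge0$ and $\xi\in S^{1,-}$, and since $\gamma_0\le\gap$ (Lemma \ref{wesh}) this is at least $e^{-v/\Tr}-o(1)$. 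Writing $F(u)=\bbP^{\pi^+}[\tau^{1,-}\le u]$ and applying the strong Markov property at the stopping time $\tau^{1,-}$ — on $\{\tau^{1,-}\le t\}$ the state $\eta(\tau^{1,-})$ lies in $S^{1,-}\subset(S^{0,+})^c$, so staying out of $S^{0,+}$ for the remaining time $t-\tau^{1,-}$ has probability $\bbP^{\eta(\tau^{1,-})}[\tau^{0,+}>t-\tau^{1,-}]$ — one obtains
\[
\bbP^{\pi^+}\big[\eta(t)\in(S^{0,+})^c\big]\ \ge\ \bbE^{\pi^+}\Big[\ind_{\{\tau^{1,-}\le t\}}\,\bbP^{\eta(\tau^{1,-})}\big[\tau^{0,+}>t-\tau^{1,-}\big]\Big]\ \ge\ \int_0^t e^{-(t-u)/\Tr}\,dF(u)-o(1).
\]

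To finish, Lemma \ref{atlantos}(ii) applied with $S^-=S^{1,-}$ (using $\pi(S^{1,-})=\tfrac12+o(1)$) gives $F(u)=1-e^{-\gamma_1 u}+o(1)$ uniformly in $u\ge0$; inserting this, integrating by parts, and using $\gamma_1\le\gap=1/\Tr$ so that $e^{(1/\Tr-\gamma_1)u}\ge1$ on $[0,t]$, one finds $\int_0^t e^{-(t-u)/\Tr}\,dF(u)\ge\gamma_1\,t\,e^{-t/\Tr}-o(1)$. Comparing the two evaluations gives, uniformly in $t\ge0$,
\[
\frac{1-e^{-t/\Tr}}{2}\ \ge\ \gamma_1\,t\,e^{-t/\Tr}-o(1).
\]
Fixing $\e>0$ and substituting $t=\e\Tr$, this rearranges to $\gamma_1\Tr\le\tfrac{e^{\e}-1}{2\e}+o(1)$; letting first $L\to\infty$ and then $\e\to0$ yields $\limsup_{L\to\infty}\gamma_1\Tr\le\tfrac12$, which together with the lower bound proves $\gamma_1\Tr=\tfrac12+o(1)$.

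The conceptual heart of the argument is this double evaluation of $\bbP^{\pi^+}[\eta(t)\in(S^{0,+})^c]$: its precise value comes from the metastable picture of Theorem \ref{th4}, while the matching lower bound records that an early excursion into $S^{1,-}$ is necessarily followed by a slow return to the positive region — exactly the property built into the definition of $S^{1,\pm}$ via Lemma \ref{SS1}. I expect the main obstacle to be bookkeeping rather than any individual estimate: one must keep every $o(1)$ uniform in $t$ throughout (in \eqref{PTT2}, in Lemma \ref{SS1}, and in Lemma \ref{atlantos}(ii)), so that the substitution $t=\e\Tr$ with $\e$ fixed and $L\to\infty$ is legitimate, and verify that the integration by parts and the expansion $(e^{\e}-1)/(2\e)\to\tfrac12$ go through as stated.
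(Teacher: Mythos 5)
Your proof is correct and follows essentially the same route as the paper: the lower bound via Lemma \ref{wesh} and Lemma \ref{SS1}, and the upper bound by evaluating the occupation probability of $(S^{0,+})^c$ at time $t$ in two ways — once via the metastable description of Theorem \ref{th4}, once via the strong Markov property at $\tau^{1,-}$ combined with the slow return to $S^{0,+}$ from $S^{1,-}$. The only differences are cosmetic: the paper starts from $\wedge$ (using \eqref{PTT}) rather than $\pi^+$, crudely bounds $\bbP^{\eta(\tau^{1,-})}[\tau^{0,+}>t-\tau^{1,-}]\geq e^{-\gamma_0 t}-o(1)$ instead of carrying the integral $\int_0^t e^{-(t-u)/\Tr}\,dF(u)$, and takes $t=\gamma_1^{-1}\delta_L^{1/3}$ in place of your double limit $t=\e\Tr$, $\e\to 0$.
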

It is important to recall that, in contrast to $\gamma_1$, the 
eigenvalue $\gamma_0$ of the process killed in $\mathcal L^{S^{0,-}}$ was
estimated only up to a factor $2$ (cf. Lemma \ref{wesh}).
\begin{proof}
The inequality  $\gga_{1} \Tr\geq  1/2+o(1)$ comes from Lemma \ref{SS1} and Lemma \ref{wesh}.
Recall the definitions 
\begin{equation*}
 \begin{split}
\tau^{-,1}&=\inf\left\{t\geq 0, \eta(t) \in S^{1,-}\right\},\\
\tau^{+,0}&=\inf\left\{t\geq 0, \eta(t) \in S^{0,+}\right\}\,.
 \end{split}
\end{equation*}
According to Theorem \ref{th4}, \eqref{clo}, Lemma \ref{wesh}, Lemma \ref{atlantos}(i) and Lemma \ref{SS1}, one can find a new sequence $\gd_L$ going to zero such that
\begin{equation*}\begin{split}
 \nu_t^{\wedge}((S^{0,+})^c)&\leq (1-e^{-t/\Tr})/2+\gd_L
 \quad\mbox{for every}\quad  t>0\,, \\
 \bbP^{\xi}\left[ \tau^{+,0}>t \right]&\geq e^{-\gga_0 t}-\gd_L\,
 \quad\mbox{for every}\quad t\geq 0
\quad\mbox{and}\quad \xi\in S^{1,-}\,,\\
  \bbP^{\wedge}\left[\tau^{-,1}>t\right]&\leq e^{-\gga_1 t}+\gd_L\, \quad \quad\mbox{for every}\quad t\geq 0\,.
\end{split}\end{equation*}
Next, define 
\begin{equation*}
\bar \tau^{+,0}=\inf\left\{t\geq \tau^{-,1}\,: \;\eta(t) \in S^{0,+}\right\}
\end{equation*}
to be the first time the process enters $S^{0,+}$ after entering for
the first time $S^{1,-}$.
One has
\begin{multline}
\frac{1-e^{-t/\Tr}}{2}+\gd_L\geq \nu_t^{\wedge}((S^{0,+})^c)\geq \bbP^{\wedge}\left[\tau^{-,1}<t,\ \bar \tau^{+,0}>t \right]\\
\geq\bbE^{\wedge}\left[\ind\{\tau^{-,1}<t\} \bbP^{\eta(\tau^{-,1})}\left[\tau^{+,0}>t\right]\right]
\geq \bbP^{\wedge}\left[\tau^{-,1}<t\right]\min_{\eta\in S^{-,1}}\bbP^\eta\left[\tau^{+,0}>t\right]\\
\geq (1-e^{-\gga_1 t}-\gd_L)(e^{-t\gga_0}-\gd_L).
\end{multline}
We use this inequality for $t=\gga_1^{-1}\gd_L^{1/3}$ and get (using the fact that $x-x^2\leq 1-\exp(-x)\leq x$ for $x$ small enough)
\begin{equation*}
 \gd_L^{1/3}/(2\gga_1 \Tr) \geq (\gd_L^{1/3}-\gd_L^{2/3}-\gd_L)(1-\gd_L^{1/3}\gga_0/\gga_1-\gd_L)-\gd_L.
\end{equation*}
As $1\leq \gga_0/\gga_1\leq 3$ (for $L$ large enough, cf.\ Lemma \ref{wesh}), all of this gives us

\begin{equation*}
\gga_1 \Tr \leq \frac{1}{2[(1-\gd_L^{1/3}-\gd_L^{2/3})(1-3\gd^{1/3}-\gd_L)-\gd_L^{2/3}]},
 \end{equation*}
which ends the proof.
\end{proof}

Once again assume that $\gd_L$ is a sequence going
to zero, this time such that one has for every $t>0$
\begin{equation}
 \begin{split}
  \bbP^{\wedge}\left[\tau^{1,-}> t \right]&\leq e^{-\gga_1 t} +\gd_L,\\
  \bbP^{\pi^+}\left[\tau^{1,-}> t \right]&\geq e^{-\gga_1 t}-\gd_L, \\
  \pi(S^{1,+})&\geq 1/2-\gd_L.
\label{eq:2607}
  \end{split}
\end{equation}
Note that the sets $S^{1,\pm}$ are  not yet good candidates for the
sets $S^\pm$ of Theorem \ref{th3}, the reason being that 
\eqref{eq:2607} and Lemma \ref{gamma1} say that the hitting time of $S^{1,-}$ is
exponential with the correct rate, but only if one starts from either
the maximal configuration or from $\pi^+$, while we want this to hold
uniformly in the initial condition in $S^+$.
We need therefore a final 
step in order to fix this problem. We set
\begin{equation*}
 S^{2,+}:=\left\{\eta\in S^{1,+} \,:\; \ \bbP^{\eta}\left[\tau^{1,-}> t \right]\geq e^{-\gga_1 t}-3\gd_L^{1/4} \right\}\,,
\end{equation*}
and define $S^{2,-}=-S^{2,+}$. 
The same computations of Lemma \ref{SS1} prove 
\begin{lemma}\label{same}
$\pi(S^{2,+})= 1/2+o(1)$.
\end{lemma}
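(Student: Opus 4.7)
The plan is to follow the template of Lemma \ref{SS1} almost verbatim, replacing $(S^{0,\pm},\gamma_0,\wedge,\pi^+)$ by $(S^{1,\pm},\gamma_1,\wedge,\pi^+)$ and using the three bounds \eqref{eq:2607} together with Lemma \ref{gamma1} in place of the analogous ingredients \eqref{hypo}. The upper bound $\pi(S^{2,+})\le\pi(S^{1,+})=1/2+o(1)$ is immediate from the inclusion $S^{2,+}\subset S^{1,+}\subset\O^+$, so the work lies in the matching lower bound.

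First I would introduce the time--indexed sets
\[
S^{2,+,t}:=\{\eta\in S^{1,+}\,:\;\bbP^{\eta}[\tau^{1,-}>t]\geq e^{-\gga_1 t}-2\d_L^{1/4}\}
\]
and prove the single--time estimate $\pi^+(S^{2,+,t})\geq 1-\d_L^{3/4}$, uniformly in $t>0$. To do so I would split $\bbP^{\pi^+}[\tau^{1,-}>t]$ according to whether $\eta\in S^{2,+,t}$ or $\eta\in S^{1,+}\setminus S^{2,+,t}$ (with a negligible $o(1)$ contribution from $\O^+\setminus S^{1,+}$, whose $\pi^+$--mass is $o(1)$ by Lemma \ref{SS1}). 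On the first piece I would use the monotone bound $\bbP^{\eta}[\tau^{1,-}>t]\leq \bbP^{\wedge}[\tau^{1,-}>t]\leq e^{-\gga_1 t}+\d_L$ (the first inequality holds because $S^{1,-}$ is a decreasing event and we use the monotone coupling, the second is the first line of \eqref{eq:2607}); on the second piece I would use the definition of $S^{2,+,t}$ to get the strict upper bound $e^{-\gga_1 t}-2\d_L^{1/4}$. Setting $q=\pi^+(S^{2,+,t})$ and $r=\pi^+(S^{1,+})-q$, the second line of \eqref{eq:2607} then yields
\[
e^{-\gga_1 t}-\d_L\leq q(e^{-\gga_1 t}+\d_L)+r(e^{-\gga_1 t}-2\d_L^{1/4})+o(1),
\]
which rearranges to $2r\d_L^{1/4}\leq 2\d_L+o(1)$, so $r\leq \d_L^{3/4}+o(1)$, as claimed.

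Next, exactly as in the proof of Lemma \ref{SS1}, I would discretize time by setting $S_i:=S^{2,+,\,i\d_L^{1/4}\gga_1^{-1}}$ for $i=1,\dots,\lfloor \d_L^{-1/2}\rfloor$ and claim
\[
S^{2,+}\supset \bigcap_{i=1}^{\lfloor \d_L^{-1/2}\rfloor} S_i.
\]
For $\eta$ in this intersection and $t\leq \tfrac12\d_L^{-1/4}\gga_1^{-1}$, I would bound $\bbP^{\eta}[\tau^{1,-}>t]$ from below by its value at the next grid point $\lceil t\gga_1\d_L^{-1/4}\rceil \d_L^{1/4}\gga_1^{-1}$, losing a multiplicative factor $e^{-\d_L^{1/4}}=1-O(\d_L^{1/4})$, which is enough to ensure the required bound with $3\d_L^{1/4}$ on the right-hand side; for $t\geq \tfrac12\d_L^{-1/4}\gga_1^{-1}$ the right-hand side $e^{-\gga_1 t}-3\d_L^{1/4}$ is non-positive for $L$ large, so the inequality is trivial. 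A union bound combined with the single-time estimate gives $\pi^+(\bigcap_i S_i)\geq 1-\d_L^{-1/2}\cdot \d_L^{3/4}=1-\d_L^{1/4}$.

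Finally I would conclude by noting $\pi(S^{2,+})\geq \pi(\O^+)\,\pi^+\bigl(\bigcap_i S_i\bigr)\geq (1/2-\d_L)(1-\d_L^{1/4})=1/2+o(1)$, combined with the trivial upper bound. The only mildly delicate step is the discretization argument guaranteeing that the finite intersection of the $S_i$'s lies in $S^{2,+}$; since $\gga_0/\gga_1$ is bounded (Lemma \ref{wesh} and Lemma \ref{gamma1}), no new idea beyond what already appears in Lemma \ref{SS1} is required.
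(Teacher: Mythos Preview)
Your proposal is correct and follows exactly the route the paper indicates (the paper's own proof is the single sentence ``The same computations of Lemma \ref{SS1} prove''), so you have essentially written out what the authors left implicit.

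One small technical point: when you pass from $2r\gd_L^{1/4}\le 2\gd_L+o(1)$ to $r\le \gd_L^{3/4}+o(1)$, you are dividing an $o(1)$ term by $\gd_L^{1/4}$, which requires that the $o(1)$ (namely $\pi^+(\O^+\setminus S^{1,+})$) be $o(\gd_L^{1/4})$. This is not automatic, but it is free: since the sequence $\gd_L$ in \eqref{eq:2607} is chosen a posteriori, you may also demand $\pi^+(S^{1,+})\ge 1-\gd_L$ (the analogue of the fourth line of \eqref{hypo}), after which your $o(1)$ becomes at most $\gd_L$ and the rearrangement gives $r\le \tfrac32\gd_L^{3/4}$. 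Equivalently, you could define $S^{2,+,t}$ as a subset of $\O^+$ rather than of $S^{1,+}$ and intersect with $S^{1,+}$ only at the very end, which is literally what Lemma \ref{SS1} does with $S^{0,+}$.
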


Now we are ready to finish the proof of Theorem \ref{th3}, with $S^+:=S^{2,+}$.
Let $-\gamma_2$ be the largest eigenvalue of the generator $\mathcal L^{S^{2,-}}$ of the process killed when it reaches $S^{2,-}$.
From Lemma \ref{wesh}, one has $\pi(S^{2,+})\,\gap\leq \gga_2\leq \gga_1$. Therefore, Lemma \ref{same} yields 

\begin{equation}\label{gga2}
 \gga_2=(1/2+o(1))\,\gap\,.
\end{equation}
Let $\tau^{2,-}$ be the hitting time of $S^{2,-}$.
For any $\eta \in S^{2,+}$ (this is actually true for any $\eta$ in $\gO$), we get from monotonicity, Lemma \ref{atlantos}(i), Lemma \ref{same} and \eqref{gga2}
\begin{equation*}
 \bbP^{\eta}(\tau^{2,-}>t)\leq 
 \bbP^{\wedge}(\tau^{2,-}>t)\leq e^{-\gga_2 t}+ o(1)+(2-4\pi(S^{2,-}))\leq e^{- \frac{t}{2\Tr}}+o(1),
\end{equation*}
where $o(1)$  is uniform in $t$.
On the other side, the definition of $S^{2,+}$ and the obvious bound $\t^{2,-}\geq \t^{1,-}$ 
give that for any $\eta \in S^{2,+}$ 
\begin{equation*}
  \bbP^{\eta}(\tau^{2,-}>t)\geq  \bbP^{\eta}(\tau^{1,-}>t)\geq e^{-\gga_1 t}+o(1)\geq e^{-\frac{t}{2\Tr}}+o(1),
\end{equation*}
where the last inequality comes from Lemma \ref{gamma1}.
\qed

\subsection{Proof of Corollary \ref{corol}}

We use the same notation that in the previous proof. We set \\
$\gG:= (S^{2,+}\cup S^{2,-})^c$, and denote the local time spent 
by the Markov chain $(\eta_s)_{s\ge 0}$ in $\gG$ by 

\begin{equation}
 H_t=H_t(\gG):=\int_0^t \ind_{\eta(s)\in \gG}\dd s.
\end{equation}
Notice that, if $\tilde \pi^{+,2}:=\pi(\cdot \ | \ S_2^+)$, then $\|\tilde \pi^{+,2}-\pi^+\|=o(1)$.
Equation \eqref{PTT2} implies that there exists a sequence $\gd_L$ going to zero such that
\begin{equation*}
\bbP^{\tilde \pi^{+,2}}(\eta_s\in \gG)\le \gd_L, \quad \forall s\ge 0.
\end{equation*}
Integrating the first equation between zero and $\gd_L^{-1/2}\Tr $ one gets
\begin{equation*}
\bbE^{\tilde \pi^{+,2}}\left[H_{ \gd_L^{-1/2}\Tr}\right]\le \gd_L^{1/2}\Tr\,.
\end{equation*}
We set
\begin{equation*}
 S^{3,+}:=\left\{ \eta \in S^{2,+}, 
\bbE^{\eta}\left[ H_{\gd_L^{-1/2}\Tr }\right]\le \gd_L^{1/4}\Tr \right\}.
\end{equation*}
Using Markov's inequality we obtain
\begin{equation*}
 \bbP^{\eta}\left[  H_{ \gd_L^{-1/2}\Tr}\ge \gd_L^{1/8} \Tr \right]\le \gd_L^{1/8}, \quad \forall \eta\in S^{3,+}\, .
\end{equation*}
Moreover, by Theorem \ref{th3}, there exists a sequence $\gd'_L$ going to zero such that 
\begin{equation*}
 \bbP^{\eta}\left[\tau^{2,-}\in [0,\gd_L^{1/16}\Tr]\cup [\gd_L^{-1/2}\Tr ,\infty) \right]\le \gd'_L, \quad \forall \eta\in S^{2,+}.
\end{equation*}
On the event $\{\tau^{2,-}\in[\gd_L^{1/16}\Tr\cup \gd_L^{-1/2}\Tr ]\}$, one has
$H_{\tau^{2,-}}\le H_{ \gd_L^{-1/2}\Tr}$ and hence, for every $\eta\in S^{3,+}$,
\begin{equation*}
 \bbP^{\eta}\left[H_{\tau^{2,-}}\ge \gd_L^{1/16}\tau^{2,-}\right]\le  \gd_L^{1/8}+\gd'_L\,.
\end{equation*}
\qed

\section{Mixing time upper bound}\label{cinquemezzi}
In this section we prove Theorem \ref{th1}. Our approach will also yield a proof of Theorem \ref{th2}. 
The main ideas of the proof can be sketched as follows. 

\medskip

\noindent
{\em Step 1}. 
Lemma \ref{lemma:peq} shows that after a burn-in time 
$O(L^{2+\e})$ the distribution $\nu_t^{\wedge}$ has a smooth density w.r.t.\ the equilibrium $\pi$. 
The first step consists in using this fact together with \eqref{trelvar} and Lemma \ref{le:monopm} 
to reduce the mixing time upper bound to a lower bound on the spectral gap of the chain.

\medskip

\noindent
{\em Step 2}. To bound the spectral gap we decompose the polymer configurations using the variables $\si_x=\sign(\eta_x)$ introduced in \eqref{nus}. From the decomposition estimates in Proposition \ref{jerr},
we shall roughly 
obtain that the spectral gap of the chain is bounded below by  $\gap_*\times \gap_+$, 
where $\gap_*$
denotes the spectral gap of the heat bath dynamics for the variables $\si$, while $\gap_+$ stands for 
the spectral gap of the polymer with a wall (i.e.\ the polymer constrained to be non negative). From \cite{CMT1}, we
know that $\gap_+=\O(L^{-2})$.

\medskip

\noindent
{\em Step 3}. 
To prove a lower bound on $\gap_*$ we shall perform a second decomposition, this time by fixing the number of crossings (i.e.\ the number of sign switches) in the configuration $\si$. Another application of the bound from Proposition \ref{jerr} will then show that $\gap_*$ is roughly bounded below by a product of 
two spectral gaps, say $\gap_{*}^{(1)}$ 
and $\gap_{*}^{(2)}$. Here $\gap_{*}^{(1)}=\min_{n}\gap_{*,n}$, where 
$ \gap_{*,n}$ is the spectral gap of the 
dynamics on the variables $\si$ constrained to have $n$ crossings, while $\gap_{*}^{(2)}$ denotes the 
spectral gap of a birth and death chain associated to the number of crossings. 
We establish a lower bound $\gap_{*}^{(2)}=\O(1)$. Moreover, we show that when $n=1$, one has $\gap_{*,1} = \O(L^{-5/2})$. To prove a similar bound for every $n$ we introduce a new dynamics involving a fixed number $n$ of 
crossings: with rate 1, independently, each crossing equilibrates its position between the two neighboring crossings positions. If $\gap^n_{\rm eq}$ denotes the spectral gap of this process, a comparison argument shows
that  $\gap_{*,n}\geq \gap_{*,1} \times  \gap^n_{\rm eq}$.

\medskip

\noindent
{\em Step 4}. 
The final step consists in obtaining the lower bound $\gap^n_{\rm eq}=\O(L^{-\e})$. 
The first observation is that if $n\leq  \e\log L$ this estimate can be obtained by means of a direct coupling argument. 
The proof of the estimate for larger values of $n$ is based on a block dynamics argument which allows us to reduce the problem to the case of $n\leq  \e\log L$ crossings. 
The analysis of the block dynamics uses a further coupling argument.
It is worth observing that the coupling arguments used here make crucial use of the heavy tailed nature of the distribution of excursions at equilibrium; see Lemma \ref{equo}. 

\medskip
Before starting the actual proof, let us pause for a few remarks. The lower bound on $\gap_*$ described in Step 3 and Step 4 above is sharp (up to $O(L^\e)$ corrections). As detailed in Section~\ref{proofth2} below, 
Step 3 and Step 4  will essentially prove Theorem \ref{th2}. On the other hand, 
the final bound 
$\tmix=O(L^{5/2 + 2 + \e})$ for Theorem \ref{th1} is likely to be off by a factor $O(L^2)$. 
As explained in Step 1 above, this comes from the use of a
decomposition estimate that involves the product $\gap_*\times \gap_+$
rather than the minimum $\min\{\gap_*,\gap_+\}$, as it would be the
case if one could  efficiently decouple the mode associated to the
variables $\si$  from the rest.

The following four subsections will develop 
the four steps described above in the given order. 
However, we warn the reader that, because of various technical obstacles, 
the above plan will not be followed very strictly and several detours will be needed.

\subsection{Reduction to spectral gap}
We start with the implementation of Step 1. For later purposes it is necessary to consider a variant of the original dynamics which avoids (very unlikely) configurations with too many crossings or 
too many zeros between consecutive crossings. 
 
Call $\chi$ the number of crossings in a configuration $\eta$:
\begin{equation}\label{chi}
\chi(\eta) = \sum_{x=-L+2}^{L-2}\ind(\eta_{x}=0\,,\;\eta_{x-1}\neq \eta_{x+1})\,.
\end{equation}
Note that only sites $x\in E_L\setminus (\{-L\}\cup\{L\})$ 
appear in the summation. 
Define $\g_x = \ind(\eta_{x}=0\,,\;\eta_{x-1}\neq \eta_{x+1})$,  so that $\chi = \sum_{x=-L+2}^{L-2}\g_x$, and write 
\begin{equation}\label{csi}
\xi_0 = -L\,,\quad \xi_{\chi+1} = L\,,
\end{equation}
 and if $1\leq j\leq\chi$, let $\xi_j$ denote the position in $\{-L+2,\dots,L-2\}$ of the $j$-th ``1'' in the sequence 
 $\{\g_{-L+2},\dots,\g_{L-2}\}$. Thus, $\xi_1,\dots,\xi_\chi$ denote the positions of 
the internal crossings.  Finally, denote by $N(\xi_i,\xi_{i+1})$ the number of zeros in the path $\eta$ strictly between $\xi_i$ and 
 $\xi_{i+1}$. See Figure \ref{fig:definition}.
 
\begin{figure}[htp]
\begin{center}
\includegraphics[width=0.9\textwidth]{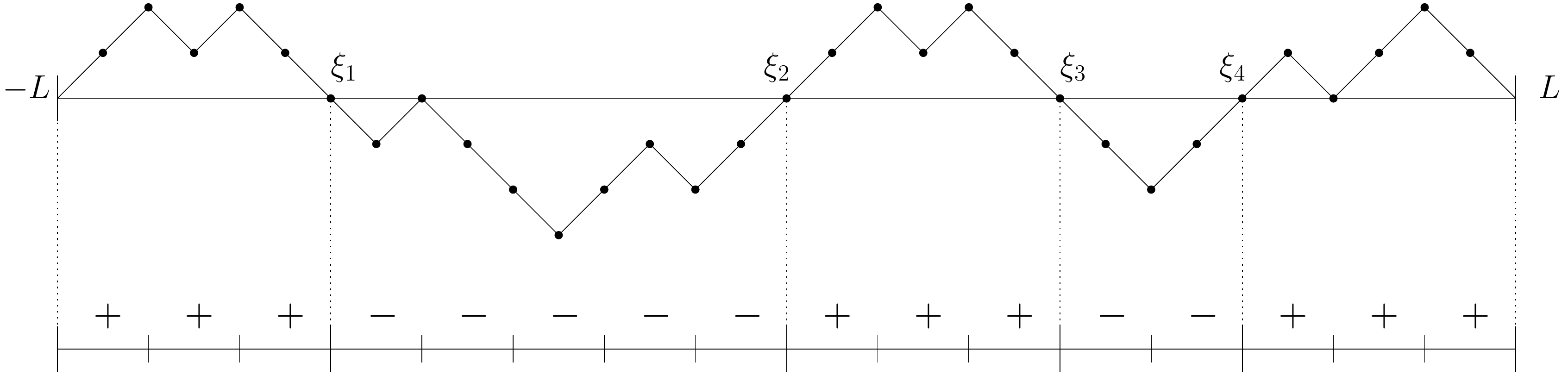}
\end{center}
\caption{A configuration $\eta$ of the polymer with $\chi(\eta)=4$ internal crossings in positions $\xi_1,\xi_2,\xi_3,\xi_4$. Note that in this case $N(\xi_0,\xi_1)=N(\xi_2,\xi_3)=N(\xi_3,\xi_4)=0$, and $N(\xi_1,\xi_2)=N(\xi_4,\xi_5)=1$. Below, the corresponding configuration of signs  $\si=\sign(\eta)$. 
}
\label{fig:definition} 
\end{figure}

Fix a constant $c_o>0$ and define the event
 \begin{equation}\label{oo}
\O^o = \Big\{\eta\in\O\,:\;\chi(\eta)\leq c_o\log L\,,\;\text{and}\;\max_{i=0,\dots,\chi}N(\xi_i,\xi_{i+1})\leq c_o\log L\Big\}\,.
\end{equation}
Clearly $\eta\in(\O^o)^c$ implies that there are at least $c_o\log L$ zeros in the path $\eta$. 
Since $\l<1$, the  bound of Lemma \ref{equo} shows that the number of zeros is exponentially integrable at equilibrium. Therefore, for any $p>0$, taking $c_o(p)$ large enough, we have
 \begin{equation}\label{ooo}
\pi(\O^o) = 1 + O(L^{-p})\,.
\end{equation}
The reason for introducing the restricted set $\O^o$ will be apparent in the sequel. For the moment, we point out that the restriction $\chi\leq c_o\log L$ is essential for our estimates in Section \ref{essent}, while both restrictions $\chi\leq c_o\log L$ and $N(\xi_i,\xi_{i+1})\leq c_o\log L$ will be needed in the estimate of the parameter $\g$ appearing in the decomposition of the spectral gap; see the proof of Proposition~\ref{jerr1} and Proposition~\ref{gapes} below. 

Next, consider the polymer process restricted to stay in the set $\O^o$, i.e.\ the continuous time Markov chain with state space $\O^o$, and generator \eqref{genero}, where the rates $r_{x,\pm}(\eta)$ are replaced by
\begin{equation}\label{rateso}
r^o_{x,\pm}(\eta) = r_{x,\pm}(\eta)\,\ind(\eta^{x,\pm}\,\in\,\O^o)\,.
\end{equation}
 Let also $\pi^o$ denote its reversible invariant measure, which is easily seen to coincide with $ \pi(\cdot\tc\O^o)$.
Let $\trel^o$ denote the relaxation time of the process defined above. 

\begin{lemma}\label{trelo}
Assume that $\trel^o=O(L^p)$ for some $p>0$. There exists $c>0$, such that for any $\e>0$ and for all $L\geq L_0(\e)$:
 \begin{equation}\label{trelbo}
\tmix\leq L^{2+\e} + c\,\trel^o\,\log L\,.
 \end{equation}
\end{lemma}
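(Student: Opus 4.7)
The plan is to mimic the argument sketched in the paper for \eqref{trelmixo} --- burn-in via Lemma \ref{lemma:peq}, $\bbL^2$ decay via \eqref{trelvar}, and uniformization in the initial condition via Lemma \ref{le:monopm} --- but with one extra twist: since the hypothesis only controls the relaxation on $\O^o$, I would switch from the original chain to its restriction after the burn-in, paying a negligible penalty by choosing the constant $c_o$ in \eqref{oo} large enough that $\pi((\O^o)^c)$ is dominated by an arbitrarily large negative power of $L$.

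More precisely, starting from $\wedge$ I would run the original chain for a burn-in time $s_0=L^{2+\e/2}$. Lemma \ref{lemma:peq} gives $\|\nu^\wedge_{s_0}-\pi^+\|=o(1)$, and choosing $c_o$ large enough that $\pi((\O^o)^c)=O(L^{-q})$ for some large $q>0$ (to be fixed below), the auxiliary measure $\pi^{+,o}:=\pi(\cdot\tc\O^+\cap\O^o)$ satisfies $\|\nu^\wedge_{s_0}-\pi^{+,o}\|=o(1)$ as well, while being supported on $\O^o$ with density $O(1)$ with respect to $\pi^o:=\pi(\cdot\tc\O^o)$.

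Denoting by $P_t$, $P^o_t$ the semigroups of the original and restricted chains, for any $s\ge 0$ triangular inequality and contraction of total variation under semigroups give
\begin{equation*}
\|\nu^\wedge_{s_0+s}-\pi\|\le o(1)+\|\pi^{+,o}P_s-\pi^{+,o}P^o_s\|+\|\pi^{+,o}P^o_s-\pi^o\|.
\end{equation*}
The last term is bounded by $Ce^{-s/\trel^o}$ via \eqref{trelvar} applied to the restricted chain (the relevant variance being $O(1)$). The middle term is controlled by coupling the two chains so as to coincide until the original one leaves $\O^o$: reversibility of $\pi$ bounds the $\pi$-equilibrium flux from $\O^o$ to $(\O^o)^c$ by $O(L\,\pi((\O^o)^c))=O(L^{1-q})$, and since the density $\dd\pi^{+,o}/\dd\pi$ is $O(1)$, the exit probability up to time $s$ is $O(sL^{1-q})$, which is $o(1)$ provided $q$ is chosen larger than the polynomial degree of $\trel^o\log L$.

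Finally, to pass from the extremal initial condition $\wedge$ to an arbitrary one I would combine Lemma \ref{le:monopm} with the $\eta\mapsto-\eta$ symmetry of $\pi$ (which yields $\|\nu^\wedge_t-\nu^\vee_t\|\le 2\|\nu^\wedge_t-\pi\|$) to get $\max_\eta\|\nu^\eta_t-\pi\|\le CL^2\|\nu^\wedge_t-\pi\|$. Absorbing the $L^2$ factor requires $s=c\,\trel^o\log L$ with $c$ sufficiently large, and this gives $\tmix\le L^{2+\e}+c\,\trel^o\log L$. The only delicate step is the coupling estimate in the middle term: one must verify that the restricted chain faithfully tracks the original one over the entire polynomial time window $[0,s_0+s]$, which is precisely what forces $q$ to be chosen depending on the polynomial degree $p$ of $\trel^o$.
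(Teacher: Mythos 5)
Your decomposition after the burn-in (switch to the restricted chain, pay an $O(sL\,\pi((\O^o)^c))$ coupling error, apply \eqref{trelvar} to the restricted chain) is essentially the paper's, and those pieces are all sound. The gap is in your final step. You propose to go from $\|\nu^\wedge_t-\pi\|$ to $\max_\eta\|\nu^\eta_t-\pi\|$ by paying a factor $CL^2$ from Lemma~\ref{le:monopm}, and to absorb that factor by enlarging $s$ to $c\,\trel^o\log L$. This would indeed shrink the two tail terms ($Ce^{-s/\trel^o}$ and $O(sL^{1-q})$) to, say, $O(L^{-3})$. But the burn-in error $\|\nu^\wedge_{s_0}-\pi^{+,o}\|$ coming from Lemma~\ref{lemma:peq} is only $o(1)$, with no rate; increasing $s$ does nothing to it. So your bound is $\|\nu^\wedge_t-\pi\|\le o(1)+O(L^{-3})$, and multiplying by $CL^2$ gives $CL^2\cdot o(1)$, which is not controlled. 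The plan as written does not close.

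The paper sidesteps this by only proving the one-step bound $\|\nu^\wedge_t-\nu^\vee_t\|\le 1/2$ at time $t=L^{2+\e/2}+c_1\trel^o$ (so $s=c_1\trel^o$, \emph{no} $\log L$ yet), and then invoking the sub-multiplicativity of Lemma~\ref{lemma:hub}: $\|\nu^\wedge_{kt}-\nu^\vee_{kt}\|\le 2^{-k}$, which beats the $4L^2$ from Lemma~\ref{le:monopm} after $k=O(\log L)$ iterations. This is where the $\log L$ in \eqref{trelbo} actually comes from, and crucially it does not require any polynomial rate from Lemma~\ref{lemma:peq}. Your proof can be repaired by replacing the ``absorb $L^2$ by taking $s$ larger'' step with this Lemma~\ref{lemma:hub} bootstrap: show your three-term bound gives $\|\nu^\wedge_t-\pi\|\le 1/4$ (hence $\|\nu^\wedge_t-\nu^\vee_t\|\le 1/2$ by the $\eta\mapsto-\eta$ symmetry), then iterate $O(\log L)$ times.
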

\proof
Let $t=L^{2+\e/2}+s$, with $s=c_1\trel^o$ for some $c_1$ to be fixed below. 
We prove that 
 \begin{equation}\label{trelo1}
 \|\nu_t^\wedge-\nu_t^\vee\|\leq 1/2\,.
 \end{equation}
From Lemma \ref{le:monopm} and Lemma \ref{lemma:hub}, \eqref{trelo1} implies 
that $\tmix\leq c_2(L^{2+\e/2}+s)\log L$, for some other constant $c_2$, which implies the lemma.
To prove \eqref{trelo1}, we first introduce some notation.
We write $\nu_u^{\eta,o}$ for the distribution at time $u$ of the state of the Markov chain
restricted to $\O^o$, when the initial configuration is some $\eta\in \O^o$.
If $\eta\notin\O^o$ we define $\nu_u^{\eta,o} = \d_{\eta}$ for all $u$. 
Next, we write $\nu_u^{\mu,o} = \sum_{\eta}\mu(\eta)\nu_u^{\eta,o}$ for a probability measure 
$\mu$ on $\O$. 
Using symmetry we can write
\begin{equation}\label{trelo2}
 \|\nu_t^\wedge-\nu_t^\vee\|\leq  2\|\nu_t^\wedge-\nu_s^{\pi^+}\| + 
 2\|\nu_s^{\pi^+} - \nu_s^{\pi^+,o}\|+ \|\nu_s^{\pi^+,o} - \nu_s^{\pi^-,o}\|\,.
 \end{equation}
 We start with the observation that
$$
 \|\nu_t^\wedge-\nu_s^{\pi^+}\| = \|\nu_s^{\nu_{t-s}^\wedge}-\nu_s^{\pi^+}\|
 \leq \|\nu_{t-s}^\wedge-\pi^+\| =o(1)\,,
$$
where the first bound is obtained by writing 
$\nu_s^{\nu_{t-s}^\wedge}-\nu_s^{\pi^+}=
\int(\nu_s^{\eta}-\nu_s^{\eta'})\r(\eta,\eta')$ with $\r$ the maximal coupling of $\nu_{t-s}^\wedge$ and $\pi^+$, 
and the last bound follows from Lemma \ref{lemma:peq}. 

We turn to the last term in \eqref{trelo2}. Let $\pi^{+,o}=\pi^+(\cdot\tc\O^o)$ and observe that
$$
\|\pi^+-\pi^{+,o}\| = o(1)\,.
$$ 
This last bound follows easily from \eqref{ooo} and \eqref{pioplusminus}. Moreover, 
the bound \eqref{trelvar} applied to the process restricted to $\O^o$
yields $$\|\nu_u^{\pi^{+,o},o} - \pi^{o}\|\leq c\,e^{-u/\trel^o}\,,$$
for some $c>0$ and for all $u>0$. 
Therefore, the third term in \eqref{trelo2} can be made smaller than, say, $1/4$ by taking 
$c_1$ large enough in the definition of the time $s$.  
It remains to prove that the second term in \eqref{trelo2} is $o(1)$. 
Since the initial condition is sampled from the same distribution $\pi^+$ we can couple the two 
processes $(\nu_u^{\pi^+})_{u\geq 0}$ and $(\nu_u^{\pi^+,o})_{u\geq 0}$ in such a way that they coincide until the first time 
when the unrestricted process exits from the set $\O^o$. 
(Note that this time can be zero.) Thus, 
$$
\|\nu_s^{\pi^+} - \nu_s^{\pi^+,o}\| \leq \sum_{\eta_0\in\O}\pi^+(\eta_0)\bbP(\exists \,u\leq s\,:\; v_u^{\eta_0}\notin\O^o)\,.
$$
From \eqref{pioplusminus} we know that 
$\pi^+(\eta_0)\leq (2+o(1))\pi(\eta_0)$, so that the time-invariance with a union bound implies 
$$
\|\nu_s^{\pi^+} - \nu_s^{\pi^+,o}\| \leq \,2sL(1-\pi(\O^o))(2+o(1))\,,
$$
where we use the fact that the average number of updates up to time $s$ is bounded by $2sL$.
Since, by assumption, $s=O(L^p)$ for some $p$, we can use \eqref{ooo} to conclude
that $\|\nu_s^{\pi^+} - \nu_s^{\pi^+,o}\|=o(1)$. 
\qed
 
 \bigskip
 
 The following three subsections will focus on the upper bound $\trel^o=O(L^{5/2+2+\e})$. 
 Once this bound is established, Theorem \ref{th1} will follow immediately from Lemma \ref{trelo}.
 
\bigskip

\subsection{Decomposing along crossings configurations}\label{jerrum1}
Recall the definition of the variables $\si\in\{-1,+1\}^{O_L}$  
given by $\si_y=\sign(\eta_y)$, where $O_L$ is the set of sites in $\{-L,\dots,L\}$ with the same parity as $L+1$.  Note that the field 
 $\si$ specifies uniquely the field $\xi$ defined after \eqref{csi}, while $\xi$ specifies $\si$ up to a global sign switch; see Figure \ref{fig:definition}. 
The space $\O^o$ can be decomposed into disjoint
subspaces 
\begin{equation}\label{decompo}
\O^o = \cup_{\si}\O^o_\si\,,
\end{equation}
where $\O^o_\si$ denotes the set of $\eta\in\O^o$ such that $\sign(\eta_x)=\si_x$ for all $x\in O_L$.
Let $\cS_o$ denote the set of all $\si\in\{-1,+1\}^{O_L}$ such that $\O^o_\si\neq\emptyset$, i.e.\ the set of $\si\in\{-1,+1\}^{O_L}$ such that $\chi(\si)\leq c_o\log L$, where $\chi(\cdot)$, defined in \eqref{chi},
is seen as a function of $\si=\sign(\eta)$.
Consider the continuous time Markov chain on ${\cS_o}$ with infinitesimal generator
\begin{equation}\label{generos}
\cG \varphi(\si) = 
\sum_{x\in O_L}\theta_{x}(\si)\left[\varphi(\si^{x})-\varphi(\si)\right] 
\end{equation}
where $\varphi:{\cS_o}\mapsto\bbR$,  $\si^{x}$ is the configuration $\si$ flipped at $x$, i.e.\ it 
is defined as $\si$ everywhere except at $x$ where it equals $-\si_x$, and the rates $\theta_{x}(\si)$ are given by
\begin{equation}\label{ratesr}
\theta_x(\si)=\sum_{\eta\in\O^o_\si}\pi(\eta\tc\O^o_\si)\big[r^o_{x,+}(\eta)\,
\ind(\si(\eta^{x,+})=\si^x) + r^o_{x,-}(\eta)\,\ind(\si(\eta^{x,-})=\si^x)\big]\,.
\end{equation}
The rates $r^o_{x,\pm}$ are given in \eqref{rateso}. Note that the measure 
\begin{equation}\label{nuso}
\nu_o(\si)=\sum_{\eta\in\O^o_\si}\pi(\eta\tc\O^o)\,,
\end{equation}
is the reversible distribution, i.e.\ $\nu_o(\si)\theta_x(\si)=\nu_o(\si^x)\theta_x(\si^x)$ holds for all $x\in O_L$ and $\si\in\cS_o$. 

In words, the process with generator $\cG$ is described as follows. 
Attach independent rate $1$ Poisson clocks to all sites $x\in O_L$. Let $\si$ be the current configuration. When site $x$ rings, choose a configuration $\eta$ sampled from the distribution $\pi(\cdot\tc\O^o_\si)$ and set $\eta'=\eta^{x,+}$ with probability $r^o_{x,+}(\eta)$,  $\eta'=\eta^{x,-}$ with probability $r^o_{x,-}(\eta)$, and $\eta'=\eta$ with probability $1-r^o_{x,+}(\eta) -r^o_{x,-}(\eta)$.
Finally, update $\si$ to $\si'$ given by $\si'_x=\sign(\eta'_x)$. 
Let $\gap^{\cS_o}$ denote the spectral gap of this Markov chain. 
\begin{proposition}\label{jerr1}
There exists $c>0$ such that for all $L$:
$$\trel^o\leq c\,L^{2}(\log L)^3\,(\gap^{\cS_o})^{-1}\,.$$
\end{proposition}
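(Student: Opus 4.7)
The plan is to apply the general decomposition bound of Proposition~\ref{jerr} to the partition $\O^o=\cup_{\sigma\in\cS_o}\O^o_\sigma$ from \eqref{decompo}. The first observation is that the projected chain in Proposition~\ref{jerr} coincides exactly with the one generated by $\cG$ in \eqref{generos}: a single polymer update $\eta\to\eta^{x,\pm}$ alters $\si(\eta)$ at most at the single site $x$, so the only admissible transitions in the projected chain are $\sigma\to\sigma^x$ with rates $\bar c(\sigma,\sigma^x)=\theta_x(\sigma)$ as in \eqref{ratesr}. Consequently $\bar\l=\gap^{\cS_o}$.

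Next, I would bound $\l_{\min}$, the minimum spectral gap of the dynamics restricted to the fibres $\O^o_\sigma$. Once $\sigma$ is fixed, the positions $\xi_1<\dots<\xi_\chi$ of the crossings are uniquely determined (they are precisely the sites in $E_L$ sandwiched between consecutive sign flips of $\sigma$), and each crossing has $\eta_{\xi_i}=0$. Since the only moves preserving $\sigma$ are those that never flip a sign at some $x\in O_L$, the restricted chain decouples into a product of independent polymer-with-wall dynamics on the segments $[\xi_i,\xi_{i+1}]$ (with the appropriate sign), each further restricted to have at most $c_o\log L$ zeros. For the unconstrained polymer with wall of length $n$ the spectral gap is $\Omega(n^{-2})$ by \cite{MR,CMT1}, and tensorization together with a standard comparison argument to remove the zero-count restriction (paying at most a factor $O(\log L)$) yield $\l_{\min}=\Omega(L^{-2}(\log L)^{-1})$.

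Third, I would estimate $\g=\max_\sigma\max_{\eta\in\O^o_\sigma}\sum_{\eta'\notin\O^o_\sigma}c(\eta,\eta')$. A transition $\eta\to\eta^{x,\pm}$ flips the sign at $x$ if and only if $\eta_x=\mp1$ and $\eta_{x-1}=\eta_{x+1}=0$; the rate of every such move is at most $1$. Hence the total rate of sign-flipping transitions from $\eta$ is bounded, up to a constant, by the number of zeros in $\eta$, which is $\chi(\eta)+\sum_{i=0}^{\chi(\eta)}N(\xi_i,\xi_{i+1})$. On $\O^o$, both $\chi\leq c_o\log L$ and each $N(\xi_i,\xi_{i+1})\leq c_o\log L$, so this gives $\g=O((\log L)^2)$.

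Finally, since $\gap^{\cS_o}\to 0$ as $L\to\infty$ (this follows for instance from \eqref{5/2} applied after comparison, or can be verified a posteriori), eventually $\gap^{\cS_o}\ll \g$, so Proposition~\ref{jerr} gives
\begin{equation*}
\gap^o \,\geq\, \frac{\gap^{\cS_o}\,\l_{\min}}{\gap^{\cS_o}+3\g} \,\geq\, c\,\frac{\gap^{\cS_o}}{L^{2}(\log L)^{3}},
\end{equation*}
which is equivalent to the desired $\trel^o\leq c\,L^2(\log L)^3(\gap^{\cS_o})^{-1}$. The main technical obstacle is the third step above, namely transferring the known $\Omega(n^{-2})$ spectral gap of the free polymer with wall to the chain with the additional $\O^o$ constraint on the number of zeros; the restriction $\chi\leq c_o\log L$ and the sharp equilibrium tail bound \eqref{expdec} (which makes configurations violating $\O^o$ rare) are exactly what is needed to carry out the comparison with only a logarithmic loss.
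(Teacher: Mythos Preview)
Your outline matches the paper's proof exactly: apply Proposition~\ref{jerr} to the decomposition \eqref{decompo}, identify $\bar\lambda=\gap^{\cS_o}$, bound $\gamma=O((\log L)^2)$ via the zero count on $\O^o$, and get $\lambda_{\min}=\Omega(L^{-2}(\log L)^{-1})$ from the factorization into independent polymer-with-wall pieces. Two small remarks: the paper handles the zero-count restriction on each fibre not by a Dirichlet-form comparison but by a direct mixing-time coupling (Lemma~\ref{wallc}, showing the constrained and unconstrained wall dynamics agree up to a rare hitting time), and in the last step one only needs the trivial bound $\gap^{\cS_o}=O(1)$ rather than $\gap^{\cS_o}\to 0$, which avoids the circular ``a posteriori'' appeal.
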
 
\proof
We apply Proposition \ref{jerr} with the decomposition \eqref{decompo}. 
To each $\si$ we can associate the continuous time Markov chain 
with state space $\O^o_\si$, defined by the generator \eqref{genero} with 
the rates $r_{x,+}(\eta)$ replaced by 
 \begin{equation}\label{ratesi}
r^\si_{x,\pm}(\eta) = r^o_{x,\pm}(\eta)\,\ind(\eta^{x,\pm}\,\in\,\O^o_\si)\,,
\end{equation}
where the rates $r^o_{x,\pm}(\eta)$ are defined in \eqref{rateso}, with reversible 
equilibrium measure $\pi^o(\cdot\tc\O^o_\si)$. Call $\gap^\si$ the spectral gap of this Markov chain. 
For a given $\si$, 
this corresponds to independent continuous time Markov chains 
for each interval $\{\xi_i,\dots,\xi_{i+1}\}$, where the crossing positions $\xi_i$ have been
 defined in \eqref{csi}.  On a given interval $\{\xi_i,\dots,\xi_{i+1}\}$,  we have a polymer dynamics with a horizontal wall constraint (polymer above or below the wall depending on the sign of the field 
 $\si$ inside that interval). Moreover, within each interval the polymer is constrained to have smaller than $c_o\log L$ zeros. Let $\gap^{\si,i}$ denote the spectral gap of this process on the interval
 $\{\xi_i,\dots,\xi_{i+1}\}$. From the independence recalled above, one has
 \begin{equation}\label{gapsi}
\gap^\si = \min_{i=0,\dots,\chi}\gap^{\si,i}\,,
\end{equation}
where $\chi$ is the number of interior crossings defined in \eqref{chi}. It follows from Lemma \ref{wallc}
below that $(\gap^\si)^{-1} = O(L^{2}\log L)$, uniformly in $\si\in{\cS_o}$. 

Next, observe that the generator $\cG$ in \eqref{generos}
coincides with the generator $\wb \cL$ from Proposition~\ref{jerr} 
for the present choice of the decomposition. 
We can then use the bound of Proposition~\ref{jerr} with
 $\bar\l=\gap^{\cS_o}$, and $\l_{{\rm min}}=\min_{\si\in{\cS_o}}\gap^\si$: 
 \begin{equation}\label{jerr111}
\trel^o\leq c\,(\gap^{\cS_o})^{-1}\max\{1,L^{2}\log L(\gap^{\cS_o} + \g)\}\,,
\end{equation}
where $c>0$ is a constant, and 
\begin{equation}\label{gammas}
\g=\max_{\si\in{\cS_o}}\max_{\eta\in\O^o_\si}\sum_{x=-L+1}^{L-1}
\big[r^o_{x,+}(\eta)\,
\ind(\si(\eta^{x,+})=\si^x) + r^o_{x,-}(\eta)\,\ind(\si(\eta^{x,-})=\si^x)\big]\,.
\end{equation}
It is immediate to check that 
$\gap^{\cS_o}$ is smaller than a constant, so that $ \trel^o\leq c\,L^{2}\log L\,(1+\g)\,(\gap^{\cS_o})^{-1}$, by \eqref{jerr111}. It remains to give an upper bound 
 $\g=O((\log L)^2)$. 
 From the definition \eqref{gammas}, 
 we see that $\g$
 is bounded above by the maximum over $\eta\in\O^o$ of the 
 number of sites $x\in O_L$ such that $\eta_{x-1}=\eta_{x+1}=0$. By definition of the set $\O^o$, the latter quantity is bounded by $(c_o\log L)^2$ in our setting. This ends the proof.
 \qed
 
 \bigskip
  
We turn to the lower bound on the gaps defined in \eqref{gapsi}, that was needed in the proof of Proposition \ref{jerr1}. The bound $(\gap^\si)^{-1}=O(L^2\log L)$, uniform over $\si\in{\cS_o}$, is an immediate consequence of Lemma \ref{wallc} below and \eqref{trelmix}. Consider the polymer dynamics under the constraints $N(\eta)\leq M$ (where $N(\eta)$ is the total number of zeros and $M$ is a positive constant) and $\eta\geq 0$, i.e.\ let $\G_{+,M}$
denote the set 
$$\G_{+,M}=\big\{\eta\in\O\,:\;N(\eta)\leq M\,,\;\text{and}\;\eta_x\geq 0
\;\text{for all } x
\big\}\,,$$ 
and write 
 $\r^{\eta}_t$ for the law at time $t$ of the corresponding process. This is the continuous time Markov chain with generator \eqref{genero} with rates replaced by $r_{x,\pm}(\eta)\ind(\eta^{x,\pm}\in\G_{+,M})$ and with reversible measure $\pi(\cdot\tc\G_{+,M})$.  
 
  \begin{lemma}\label{wallc}
  There exists a constant $c>0$
such that for any $\eta\in\G_{+,M}$ and for any $M\geq c\,\log L$:
$$
\|\r^{\eta}_T - \pi(\cdot\tc\G_{+,M})\| = o(1)\,,\quad T=c\,L^2\log L\,.
$$  
  \end{lemma}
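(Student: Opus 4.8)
The plan is to reduce the claim to the known fact that the polymer dynamics with a wall at height $0$ but \emph{no} constraint on the number of zeros has mixing time $O(L^{2}\log L)$, uniformly in the initial condition (see \cite[Section~4]{CMT1} and \cite{MR}, the input already used in the proof of Proposition~\ref{muretto}). Write $\wt\r^{\,\eta}_{t}$ for the law at time $t$ of that unconstrained wall dynamics started at $\eta\geq 0$, with reversible measure $\pi_{+}:=\pi(\cdot\tc\{\eta\geq 0\})$, and recall that the chain in the lemma has reversible measure $\pi_{\G}:=\pi(\cdot\tc\G_{+,M})$.

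I would first record that the two equilibria are exponentially close: by \eqref{3/2} and \eqref{3/2+}, $\pi(\eta\geq 0)=Z_{2L}^{+}/Z_{2L}\asymp 1$, so by the tail bound \eqref{expdec}, $\pi_{+}\big((\G_{+,M})^{c}\big)=\pi_{+}(N>M)\leq C\,e^{-M/c_{2}}$; hence, taking the constant $c$ in $M\geq c\log L$ large, $\pi_{+}\big((\G_{+,M})^{c}\big)=O(L^{-p})$ for any fixed $p$, and in particular $\|\pi_{+}-\pi_{\G}\|=O(L^{-p})$ and $\pi_{+}(N\geq M/2)=O(L^{-p})$. The heart of the argument is then a monotone domination. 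Given $\eta\in\G_{+,M}$, run $(\r^{\eta}_{t})$ and $(\wt\r^{\,\eta}_{t})$, both started at $\eta$, under the order-preserving heat-bath coupling. The only effect of the constraint $N\leq M$ is to forbid the update $\eta_{x}=2\mapsto 0$ — which lowers the height at $x$ and raises $N$ by one — when $N=M$; since this is the sole forbidden move and it is always downward, for ordered neighbouring heights the conditional law at an updated site for the constrained chain stochastically dominates that of the unconstrained chain. Thus the coupling can be taken with $\r^{\eta}_{t}\geq\wt\r^{\,\eta}_{t}$ for all $t$, and since both are non‑negative paths every zero of $\r^{\eta}_{t}$ is a zero of $\wt\r^{\,\eta}_{t}$, whence
\begin{equation*}
N(\r^{\eta}_{t})\leq N(\wt\r^{\,\eta}_{t})\qquad\text{for all }t\geq 0 .
\end{equation*}
Moreover the two chains coincide until the first time $\tau$ a forbidden move is attempted, so $\|\r^{\eta}_{t}-\wt\r^{\,\eta}_{t}\|\leq\bbP^{\eta}(\tau\leq t)$, and for $t>\tau$ the gap $\r^{\eta}_{t}\geq\wt\r^{\,\eta}_{t}$ keeps being fed only by further such moves, all occurring while $N=M$.

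Let $T=c_{1}L^{2}\log L$ dominate the mixing time of the unconstrained wall chain. Stage~(i) (\emph{shedding zeros}): since $\|\wt\r^{\,\eta}_{T}-\pi_{+}\|=o(1)$, the displayed domination gives $\bbP^{\eta}\big(N(\r^{\eta}_{T})\geq M/2\big)\leq\bbP^{\eta}\big(N(\wt\r^{\,\eta}_{T})\geq M/2\big)\leq\pi_{+}(N\geq M/2)+\|\wt\r^{\,\eta}_{T}-\pi_{+}\|=o(1)$, so at time $T$ the constrained chain has fewer than $M/2$ zeros and sits above the near‑equilibrium configuration $\wt\r^{\,\eta}_{T}$, with probability $1-o(1)$. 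Stage~(ii) (\emph{no rapid build‑up}): on this good event one must show that over the further interval $[T,2T]$ the number of zeros of the wall chain stays below $M$, with probability $1-o(1)$; then no new discrepancy is created after time $T$, the (monotone‑coupled) chains $\r^{\eta}$ and $\wt\r^{\,\eta}$ coalesce within the extra time $T$ by the wall‑mixing bound, and hence $\|\r^{\eta}_{2T}-\wt\r^{\,\eta}_{2T}\|=o(1)$. Combining with Stage~(i) and the equilibrium estimate, $\|\r^{\eta}_{2T}-\pi_{\G}\|\leq\|\r^{\eta}_{2T}-\wt\r^{\,\eta}_{2T}\|+\|\wt\r^{\,\eta}_{2T}-\pi_{+}\|+\|\pi_{+}-\pi_{\G}\|=o(1)$ uniformly in $\eta\in\G_{+,M}$, which is the assertion with $T$ replaced by $2T=O(L^{2}\log L)$.

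The main obstacle is Stage~(ii): controlling, uniformly over the (possibly atypical) configuration reached at time $T$, that the wall polymer does not accumulate $M$ zeros on the time scale $L^{2}\log L$. The natural route is to bound the expected number of up‑crossings of level $M$ by $s\mapsto N(\wt\r^{\,\eta}_{s})$ — each up‑crossing being a jump $M-1\to M$ that occurs only while $N=M-1$, at rate equal to the number of local patterns $(1,2,1)$ times $\tfrac{\l}{\l+1}$ — and then to estimate both that rate and the occupation time of $\{N\geq M-1\}$ through \eqref{expdec} and the heavy‑tailed excursion estimates behind Lemma~\ref{equo}, using that from time $T$ onwards $\wt\r^{\,\eta}_{\cdot}$ is, in law, within $o(1)$ of $\pi_{+}$. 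Making this occupation‑time bound uniform along the (not yet stationary) trajectory — and in particular keeping the resulting time scale $O(L^{2}\log L)$ rather than $O(L^{2}(\log L)^{O(1)})$, which would anyway suffice downstream — is the delicate point; an alternative is to rerun the excursion‑based coupling proof of the wall‑polymer mixing in \cite[Section~4]{CMT1}, inside which the number of zeros is already $O(\log L)$ throughout with high probability, so that the constraint $N\leq M$ with $M\geq c\log L$ never comes into play.
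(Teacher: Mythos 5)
Your overall architecture is the same as the paper's: compare the constrained chain with the unconstrained wall dynamics of \cite[Theorem 3.1]{CMT1}, use that the constrained chain stochastically dominates the unconstrained one (forbidding the zero-creating move is a downward censoring), and use the exponential tail of $N$ at equilibrium to make $\pi^w(N>M)=O(L^{-p})$ for $M\geq c\log L$. The problem is that your ``Stage (ii)'' --- showing that on the time scale $L^2\log L$ the chain does not accumulate $M$ zeros, so that no forbidden move is ever attempted --- is exactly the content of the lemma, and you leave it as an acknowledged gap, offering only an up-crossing/occupation-time sketch that you yourself call delicate, or a rerun of the entire proof of \cite{CMT1}. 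Neither is carried out, so the proof is incomplete precisely where the work is.

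The paper closes this gap with a short argument you should be able to reconstruct: let $\t^\eta$ be the hitting time of $\{N>M\}$ for the \emph{unconstrained} wall chain, and set $G_t:=\{\eta:\ \bbP(\t^\eta\leq t)\leq L^{-1}\}$. By stationarity and a union bound over the $O(tL)$ expected updates, $\sum_\eta \pi^w(\eta)\,\bbP(\t^\eta\leq t)\leq 2tL\,\pi^w(N>M)=O(L^{-p'})$, so Markov's inequality gives $\pi^w(G_t^c)=O(L^{1-p'})$; crucially, $G_t$ is an \emph{increasing} event, so the domination $\r^\eta_t\succeq\mu^\eta_t$ together with the mixing bound \eqref{wiwall} yields $\r^\eta_t(G_t^c)\leq\mu^\eta_t(G_t^c)=O(L^{1-p'})$. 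Then, restarting from any $\eta'\in G_t$ and coupling the two chains so that they agree until $\t^{\eta'}$ (the first time a move could be censored), one gets $\|\r^{\eta'}_t-\pi^w\|\leq\bbP(\t^{\eta'}\leq t)+\|\mu^{\eta'}_t-\pi^w\|=O(L^{-1})$, and the Markov property at time $t$ finishes the proof at time $2t$. This replaces your pathwise control of $N$ along a non-stationary trajectory by a single equilibrium computation plus monotonicity of $G_t$, and it also sidesteps the loose coalescence claim in your Stage (ii) (two already-divergent monotonically coupled copies do not coalesce merely because the unconstrained chain has mixed; one needs the $4L^2\|\mu_t^\wedge-\mu_t^\z\|$ bound of Lemma \ref{le:monopm}, or, as the paper does, one simply avoids ever letting the copies diverge on the good event).
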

  \proof
  Let $\mu^{\eta}_t$ denote the evolution without the constraint $N(\eta)\leq M$, i.e.\
  the Markov chain with generator \eqref{genero} with rates given by $r_{x,\pm}(\eta)\ind(\eta^{x,\pm}\geq 0)$ and with reversible measure 
  $\pi^w=\pi(\cdot\tc \eta\geq 0)$. The mixing time of this ``wall'' constrained model 
  has been analyzed in \cite[Theorem~3.1]{CMT1}, where it is shown that, for some constant $c>0$, for all initial $\eta\geq 0$: 
  \begin{equation}\label{wiwall}
  \|\mu^{\eta}_t - \pi^w\|\leq c\,L^2\,\exp\Big(-\frac{t}{c\,L^2}\Big)\,.
  \end{equation}
As in \eqref{ooo}, standard equilibrium estimates imply that for any $p>0$ one can choose $c>0$ such that for all $M\geq c\,\log L$:
\begin{equation}\label{wiwall3}
\pi^w(N>M) = \|\pi^w -  \pi(\cdot\tc\G_{+,M})\|=O(L^{-p})\,.
\end{equation}
Next, observe that by monotonicity $\mu^{\eta}_t$ is stochastically dominated by $\r^{\eta}_t$.
  Let $\t^\eta$ denote the hitting time of the set $N(\cdot)>M$ for the process 
 with law  $(\mu^{\eta}_t)_{t\geq 0}$, and introduce the event
\begin{equation*}
 G_t:=\{\eta\in\Omega,\ \bbP(\t^\eta \leq t)\leq L^{-1}\}\,.
\end{equation*}
Note that, for fixed $t$, $G_t$ is an increasing event, and therefore 
$\mu_s^{\eta}(G_t) \leq \r_s^{\eta}(G_t)$, for any $s,t\geq 0$.

Let us fix now $t=c_1\,L^2\log L$.
For any $p>0$, using the time-invariance and a union bound with 
the fact that the expected number of updates up to time $t$ is bounded by $2tL$, one has
\begin{equation*}\label{wiwall4}
\sum_{\eta}\pi^w(\eta)\bbP\big(\t^\eta\leq t\big) \leq 2tL\,\pi^w(N>M)
 = O(L^{-p})\,,
\end{equation*}
where we use \eqref{wiwall3}.
Markov's inequality then implies that 
\begin{equation*}
 \pi^w(G_t^c) = O(L^{1-p})\,.
\end{equation*}
From \eqref{wiwall} we then deduce that if $c_1$ is sufficiently large (in the definition of $t$):
\begin{equation}\label{wiwall5}
 \r_{t}^{\eta}(G_t^c)\leq  \mu_t^{\eta}(G_t^c)  =O(L^{1-p})\,.
\end{equation}
On the other hand, writing $\r^{\eta}_{2t}(A) - \pi^w(A) = \sum_{\eta'}\r^{\eta}_t(\eta')(\r^{\eta'}_t(A)-\pi^w(A))$ for any set $A$ one has 
$$
 \|\r^{\eta}_{2t} - \pi^w\| \leq \sum_{\eta'\in G_t}
 \r^{\eta}_t(\eta')
\|\r^{\eta'}_t-\pi^w\| + \r^{\eta}_t(G_t^c)\,.
$$
To estimate the first term above, note that the processes with laws $(\mu^{\eta}_t)_{t\geq 0}, (\r^{\eta}_t)_{t\geq 0}$
can be coupled in such a way that they coincide until time $\t^\eta$. Therefore, 
by definition of $G_t$, for any $\eta'\in G_t$:
\begin{equation}\label{wiwall6}
\|\r^{\eta'}_t - \pi^w\|\leq \bbP(\t^{\eta'}\leq t) + \|\mu^{\eta'}_t - \pi^w\| = O(L^{-1})\,,
\end{equation}
where we have used again \eqref{wiwall} to bound the last term above. 
In conclusion, using
   \begin{equation*}\label{wiwall2}
  \|\r^{\eta}_{2t} - \pi(\cdot\tc\G_{+,M})\|\leq \|\pi^w -  \pi(\cdot\tc\G_{+,M})\| +  \|\r^{\eta}_{2t} - \pi^w\|\,,
    \end{equation*}
together with \eqref{wiwall3}, \eqref{wiwall5}, and \eqref{wiwall6}, we arrive at $\|\r^{\eta}_{2t} - \pi(\cdot\tc\G_{+,M})\| = O(L^{-1})$ which implies the desired estimate. 
\qed

\bigskip
Thanks to Lemma \ref{trelo} and Proposition \ref{jerr1}, Theorem \ref{th1} will follow from 
the estimate
\begin{proposition}\label{gapes}
For any $\e>0$ and for all $L\geq L_0(\e)$:
 \begin{equation}\label{orelbo}
\gap^{\cS_o}\geq L^{-\frac52-\e} \,.
 \end{equation}
\end{proposition}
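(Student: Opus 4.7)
The plan is to follow the roadmap outlined in Step 3 and Step 4 of the introduction to Section \ref{cinquemezzi}. First I would apply Proposition \ref{jerr} a second time, this time decomposing $\cS_o$ according to the value of $\chi(\sigma)$, i.e.\ the partition $\cS_o=\cup_{n\leq c_o\log L}\cS_{o,n}$ with $\cS_{o,n}=\{\sigma\in\cS_o:\chi(\sigma)=n\}$. The projected chain is a birth-and-death process on $\{0,1,\dots,c_o\log L\}$ whose transition rates are obtained by averaging $\theta_x$ against $\nu_o(\cdot\tc\cS_{o,n})$, with reversible measure $\bar\nu(n)=\nu_o(\cS_{o,n})$. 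Using the equilibrium crossing bound \eqref{expdec} (which shows $\bar\nu$ has exponential tails and $\bar\nu(0),\bar\nu(1)=\Omega(1)$) and noting that single-step transitions $n\leftrightarrow n\pm 1$ happen at rate $\Omega(1)$ whenever a crossing or a potential crossing is present, a standard Hardy-type inequality for birth-and-death chains gives $\gap_*^{(2)}=\Omega(1)$. The parameter $\gamma$ in Proposition \ref{jerr} is again $O((\log L)^2)$ because $\chi$ changes by one unit per admissible update and only at sites adjacent to a zero, of which there are $O(\log L)$ in $\O^o$. So the decomposition bound reads $\gap^{\cS_o}\geq c\,\min_n\gap_{*,n}$ up to logarithmic factors, where $\gap_{*,n}$ is the spectral gap of $\cG$ restricted to $\cS_{o,n}$.

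Next I would establish $\gap_{*,1}=\Omega(L^{-5/2})$ by comparison with the random walk of a single crossing on $E_L$ whose stationary measure is $\rho$ in \eqref{rev32}. This is essentially the content of Lemma \ref{equo} combined with \eqref{32}: the Dirichlet form of the sign-flip dynamics with one crossing dominates (up to constants) that of the birth-and-death chain with invariant measure $\rho$, for which a Hardy inequality (as mentioned after \eqref{rev32}) gives relaxation time $\asymp L^{5/2}$. To pass from $n=1$ to arbitrary $n$, I would introduce the auxiliary \emph{equilibration dynamics} on $\cS_{o,n}$: with rate $1$ each crossing is resampled from its equilibrium conditional on the positions of its two neighbors. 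Let $\gap^n_{\rm eq}$ denote its spectral gap. A comparison of Dirichlet forms, based on writing a single sign flip of the full dynamics as a composition of one crossing move together with an equilibration step, yields the multiplicative bound $\gap_{*,n}\geq c\,\gap_{*,1}\cdot \gap^n_{\rm eq}$. At this point the estimate $\gap^{\cS_o}\geq c\,L^{-5/2}(\log L)^{-O(1)}\min_n \gap^n_{\rm eq}$ reduces the problem to a lower bound $\gap^n_{\rm eq}\geq L^{-\varepsilon}$ for all $n\leq c_o\log L$.

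The core analytic work, and the genuine obstacle, is Step 4: proving that $\gap^n_{\rm eq}=\Omega(L^{-\varepsilon})$. For $n\leq \varepsilon\log L$ I would construct an explicit monotone coupling of two copies of the equilibration dynamics: once all crossings but one agree, a single move brings the remaining crossing into coincidence with constant probability thanks to the heavy-tailed excursion distribution from Lemma \ref{equo}, and an iteration of this step couples the two copies in polylogarithmic time. For larger $n$, the plan is to introduce a block dynamics that updates, at rate $1$, the sub-configuration of crossings in randomly chosen consecutive blocks of size $k=\Theta(\log L)$ according to their conditional equilibrium; since each block contains at most $n k/(c_o\log L)\ll \log L$ crossings on average, the small-$n$ analysis applies within each block. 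A standard block-dynamics comparison (in the spirit of Proposition \ref{jerr}) together with the observation that the projection of $\nu_o(\cdot\tc\cS_{o,n})$ onto disjoint blocks has short-range correlations (again via the excursion tail bound in Lemma \ref{equo}) gives $\gap^n_{\rm eq}\geq L^{-\varepsilon}$ uniformly in $n$.

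The main obstacle will be the analysis of the block dynamics for large $n$: proving that the conditional measure on a block, given the exterior crossings, is sufficiently close to a product form across disjoint blocks for the block gap of the projected chain to be $\Omega(1)$. This is delicate because the long-range correlations of $\nu_o$ are non-trivial under the constraint $\chi=n$, and the heavy-tailed excursions allow rare but influential configurations; controlling these via Lemma \ref{equo} and the truncation provided by the definition \eqref{oo} of $\O^o$ is exactly where the restriction $N(\xi_i,\xi_{i+1})\leq c_o\log L$ becomes essential.
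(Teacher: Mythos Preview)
Your overall architecture matches the paper's, but there is one concrete gap and one place where your plan for Step~4 diverges from what actually works.

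The gap: decomposing $\cS_o$ directly by $\chi$ yields restricted state spaces $\cS_{o,n}$ that are \emph{not irreducible}. The only single-site flips that preserve $\chi$ are those at interior $x$ with $\sigma_{x-2}\neq\sigma_{x+2}$, i.e.\ moves that slide an existing crossing by $\pm 2$; flipping the endpoint sign $\sigma_{-L+1}$ always changes $\chi$ by $\pm 1$. Hence each $\cS_{o,n}$ splits into two disconnected pieces related by the global sign flip, $\gap_{*,n}=0$, and Proposition~\ref{jerr} gives nothing. The paper handles this by first decomposing $\cS_o=\cS^+\cup\cS^-$ according to the sign of $\sigma_{-L+1}$ (equation~\eqref{decomposa}), checking that this two-state projected chain has gap of order one, and only then decomposing $\cS^+$ by $\chi$. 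Relatedly, the projected chain on $\chi$ is not a pure birth-and-death process: interior flips change $\chi$ by $\pm 2$, so the rates \eqref{cnpm2} must be included (Lemma~\ref{gapbar}); and the correct bound on the escape parameter here is $\gamma=O(\log L)$ via \eqref{indgamma}, not $(\log L)^2$.

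On Step 4, your two ingredients are not obviously viable as stated and differ from the paper. A monotone coupling for the equilibration dynamics would require the conditional law \eqref{arcsine} of $\xi_i$ to be stochastically monotone in both neighbours, which is not established; instead Lemma~\ref{lemma:epsilon1} couples two copies by driving each, independently, to the minimal configuration $(\xi_i=-L+2i)$ within time $n^2$ with probability $\geq \tfrac12\alpha^n$, giving $\gap^n_{\rm eq}\geq c\,e^{-n/c}$ uniformly in $L$. For large $n$ the paper does \emph{not} appeal to short-range correlations or a product-form approximation of $\nu_n$ (this measure is a product conditioned on a global sum, so correlations are genuinely long-range). It takes blocks of $K=\varepsilon_1\log L$ consecutive \emph{crossings}, so the number of blocks $\Delta=n/K$ is bounded, and proves the block-dynamics gap is $\geq c(\Delta)K^{-4\Delta}$ (Lemma~\ref{lemma:gapbk}) via a direct coupling built on the heavy-tail fact that under $\nu_K^{2\ell}$ one increment exceeds $2\ell-\ell^{1/3}$ with probability $\sim 1/(K+1)$ (Lemma~\ref{lemma:ungros}). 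Combined with \eqref{eq:formule} and Lemma~\ref{lemma:epsilon1}, this yields $\gap^n_{\rm eq}\geq L^{-\varepsilon}$.
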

The following two subsections are devoted to the proof of Proposition \ref{gapes}.
\subsection{Decomposing according to  the number of crossings}\label{jerrum2}
We first decompose ${\cS_o}$ according to whether the first excursion has positive or negative sign, i.e.\
 \begin{equation}\label{decomposa}
{\cS_o} = {\cS}^+ \cup {\cS}^-\,,\qquad {\cS}^\pm=\{\si\in{\cS_o}\,:\; \si_{-L+1} = \pm 1\}\,.
\end{equation}
An application of Proposition \ref{jerr} with the decomposition \eqref{decomposa}
yields 
 \begin{equation}\label{jerr02}
\gap^{\cS_o} \geq \min\{\bar\l/3,\bar\l\l_{\min}/(\bar\l+3\g)\}\,,
 \end{equation}
where $\l_{\min}=\gap^{{\cS}^+}$ denotes the gap of the process restricted to ${\cS}^+$ (by symmetry, this equals   the gap of the process restricted to ${\cS}^-$), while $\bar\l$ is the gap of the symmetric two state Markov chain with transition rate 
$$c(+,-)=c(-,+) = \sum_{\si\in{\cS}^+}\nu_o(\si\tc{\cS}^+)\theta_{-L+1}(\si)\,,
\quad \g = \max_{\si\in{\cS}^+}\theta_{-L+1}(\si)\,,$$
$\theta_{-L+1}(\si)$ being the probability of a flip at $x=-L+1$ in $\si$; see \eqref{ratesr}.
The probability measure $\nu_o$ is defined in \eqref{nuso}.
Clearly, $\g\leq 1$. Since at equilibrium the polymer has a uniformly positive probability 
of taking the value $\eta_{-L+2}=0$, cf.\ \eqref{32}, it follows easily that $c(+,-)\geq c(\l)>0$ for some $L$-independent constant $c(\l)$. Thus, $ \bar\l$ is of order $1$, and for some $c=c(\l)>0$ one obtains
  \begin{equation}\label{jerr2}
\gap^{\cS_o}\geq c\,\gap^{{\cS}^+}\,.
 \end{equation}
Next, consider the number of crossings $\chi$ defined in \eqref{chi}. Since $\chi$ is a function of the signs 
$\si$ only, 
we can write $\chi(\si)$ for the number of crossings in a given $\si\in{\cS_o}$. 
Thus, the space ${\cS}^+$ is partitioned as
\begin{equation}\label{decomposp}
{\cS}^+ = \cup_{n=0}^m\cS^{+,n}\,,
\end{equation}
where $m=\lfloor c_o\log L\rfloor$, and $\cS^{+,n} = \{\si\in\cS^+\,:\;\chi(\si)=n\}$. 
We apply Proposition \ref{jerr}, this time with the decomposition \eqref{decomposp}. 
Thus, $\gap^{\cS^+}$ can be bounded below as in \eqref{jerr02} where $\l_{\min}= \min_{0\leq n\leq m}\gap^{+,n}$ now stands for the minimum of gaps of the process restricted to $\cS^{+,n}$, while  
 $\bar \l$ stands for the gap of the 
 random walk on $\{0,\dots,m\}$ with transition rates
\begin{gather}\label{cnpm}
c(n,n\pm 1) = \sum_{\si\in\cS^{+,n}}\sum_{x\in O_L}\nu_o(\si\tc\cS^{+,n}) \theta_x(\si)\ind(\si^x\in\cS^{+,n\pm 1})\,,\\
c(n,n\pm 2) = 
\sum_{\si\in\cS^{+,n}}\sum_{x\in O_L}\nu_o(\si\tc\cS^{+,n}) \theta_x(\si)\ind(\si^x\in\cS^{+,n\pm 2})\,.
\label{cnpm2}
\end{gather} 
Note that the transition $n\to n+1$ is necessarily obtained by creating a new crossing 
at the rightmost position $\xi_{n} = L-2$, while the transition $n\to n+2$ can be obtained by creating two adjacent 
crossings between two existing consecutive crossings.  These are the only transitions that are induced by the single flips $\si\to\si^x$. 
By construction, the reversible invariant measure of the chain defined by the rates \eqref{cnpm}, \eqref{cnpm2} 
is given by $$
\mu(n):=\nu_o(\chi=n\tc\cS^+) = \pi(\chi=n\tc \O^o)\,,\qquad n=0,\dots,m\,.$$
\begin{lemma}\label{gapbar}
The gap $\bar\l$ of the chain defined by \eqref{cnpm}, \eqref{cnpm2} satisfies $c\geq\bar\l\geq c^{-1}$ for some $c=c(\l)>0$ uniformly in $L\geq 2$.
\end{lemma}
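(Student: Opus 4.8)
The plan is to establish the two inequalities separately, the upper bound by a single test function and the lower bound by reducing to a birth--and--death chain and invoking the one--dimensional Poincar\'e (Hardy) inequality, the only real inputs being two quantitative equilibrium estimates.

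For the upper bound $\bar\l\le c$ I would plug $f=\ind_{\{n\ge 1\}}$ into the variational formula \eqref{trel} for the gap of the chain \eqref{cnpm}--\eqref{cnpm2}. Here $\var_\mu(f)=\mu(0)(1-\mu(0))$, and $\mu(0)=\pi(\chi=0\mid\O^o)$ is bounded away from $0$ and from $1$ uniformly in $L$: indeed $\pi(\chi=0)\ge\pi(N=0)\asymp 1$ by \eqref{3/2} and the Catalan count of zero--free bridges, while $\pi(\chi\ge 1)\ge\pi(\eta_{L-2}=0,\ \eta_{L-3}\neq\eta_{L-1})\asymp 1$ by \eqref{32} (the site $L-2$ being an ``edge'' site, where $\pi(\eta_{L-2}=0)\asymp 1$), and the conditioning on $\O^o$ is harmless by \eqref{ooo}. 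On the other hand only the transitions $0\to1$ and $0\to2$ contribute to $\cE(f,f)$, so $\cE(f,f)=\mu(0)\big(c(0,1)+c(0,2)\big)$; now $c(0,1)\le 1$ trivially, while $\cS^{+,0}$ consists of the single configuration $\si\equiv+1$, so $c(0,2)=\sum_{x\in O_L}\theta_x(\si)\ind(\si^x\in\cS^{+,2})$ is at most $\tfrac12$ times the expected number of zeros of a non--negative path in $\O^o$, which is $O(1)$ by \eqref{expdec}, because the sites $x$ that contribute are exactly those with $\eta_{x-1}=\eta_{x+1}=0$ and their number is at most $N(\eta)$. Hence $\bar\l\le\cE(f,f)/\var_\mu(f)\le c$.

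For the lower bound I would first discard the size--two transitions: restricting the generator \eqref{cnpm}--\eqref{cnpm2} to the nearest--neighbour moves $n\leftrightarrow n+1$ --- which by the remark after \eqref{cnpm2} are exactly the flips of $\si_{L-1}$ --- produces a birth--and--death chain on $\{0,\dots,m\}$ with the \emph{same} reversible measure $\mu$ and a smaller Dirichlet form; it is irreducible since its up--rates $q(n):=c(n,n+1)$ are positive for $n<m$, so its gap is a lower bound for $\bar\l$. I claim two estimates, uniform in $L$ and in $0\le n\le m=\lfloor c_o\log L\rfloor$: a \emph{rate estimate} $q(n)\ge c_0>0$ for $n<m$, and an \emph{equilibrium estimate} $\mu(n)=\pi(\chi=n\mid\O^o)\asymp\rho^n$ for some $\rho=\rho(\l)\in(0,1)$. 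Granting these, the edge conductances are $\mu(n)q(n)\asymp\rho^n$, so the classical weighted Hardy inequality $\var_\mu(f)\le 4B\,\cE(f,f)$ with $B=\max_{1\le n\le m}\big(\sum_{k\ge n}\mu(k)\big)\big(\sum_{j=0}^{n-1}(\mu(j)q(j))^{-1}\big)$ --- applied after the usual splitting at the bounded median of $\mu$ --- yields $B\asymp\max_n\rho^n\rho^{-n}=O(1)$, i.e.\ $\bar\l\ge c^{-1}$. The rate estimate comes from the following description: flipping $\si_{L-1}$ (whose only $O_L$--neighbour is $L-3$) creates exactly one new crossing precisely when $\si_{L-1}=\si_{L-3}$, and $\theta_{L-1}(\cdot)$ is nonzero only on configurations with $\eta_{L-2}=0$ (so $|\eta_{L-1}|=1$ and the $\si$--flip is possible), with rate $\tfrac12$; such an update changes $\eta$ at a single site, leaves $N(\eta)$ unchanged and raises $\chi$ by one, hence stays in $\O^o$ as long as $n<m$. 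Thus $q(n)=\tfrac12\,\pi\big(\eta_{L-2}=0,\ \eta_{L-3}=\eta_{L-1}\ \big|\ \O^o,\ \chi=n,\ \si_{-L+1}=+1\big)$, which is bounded below uniformly in $n$ because, conditionally on the path up to $L-4$, the last two steps are essentially free and both ``$\eta_{L-2}=0$'' and ``no crossing at $L-2$'' occur with probability $\asymp 1$ by \eqref{32}. For the equilibrium estimate, the upper bound $\mu(n)\le 2\pi(\chi=n)\lesssim\rho^n$ follows from \eqref{expdec}, since $\chi\le N$ and, conditionally on $N$, the number of crossings is the number of sign changes among the $N+1$ (conditionally i.i.d., uniform) excursion signs, hence dominated by a geometric variable; and the map that on the event $\{\eta_{L-2}=0,\ \eta_{L-3}=\eta_{L-1},\ \chi=n\}$ flips $\eta_{L-1}$ is a weight--preserving injection into $\{\eta_{L-2}=0,\ \eta_{L-3}\neq\eta_{L-1},\ \chi=n+1\}$, so $\pi(\chi=n+1)\ge c_1\,\pi(\chi=n)$ with the same edge estimate, whence $\pi(\chi=n)\gtrsim c_1^n$; the sharp local asymptotics \eqref{3/2}--\eqref{32} then pin the true decay rate of $\pi(\chi=n)$ to a constant $\rho\in(0,1)$ from both sides for $n\le m$, which is exactly what the Hardy product requires.

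The genuinely delicate point is this last matching of exponential rates in the two--sided estimate for $\mu(n)$: the soft stochastic--domination bound of Lemma \ref{equo} is not sufficient, and one must use the precise asymptotics \eqref{3/2}--\eqref{32}, paying attention to the end--concentrated (hence non--uniform) location of the crossings and to the conditioning on $\O^o$, which is negligible for $n\le c_o\log L$ thanks to \eqref{ooo}. Once these inputs are in place everything else is routine.
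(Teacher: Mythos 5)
Your proposal is correct and follows essentially the same route as the paper: a test function for the upper bound (the paper uses $\chi$ itself rather than $\ind_{\{n\geq 1\}}$; both work, for the same reason that the off-diagonal rates in \eqref{cnpm}--\eqref{cnpm2} are carried by the $O(1)$ sites with $\eta_{x-1}=\eta_{x+1}=0$), and, for the lower bound, a reduction to the nearest-neighbour birth-and-death chain on $\{0,\dots,m\}$ with up-rates bounded below, closed by a one-dimensional Poincar\'e estimate (the paper invokes Cheeger, or Miclo's bounds, where you invoke Hardy). Your identification of the $n\to n\pm1$ moves with flips of $\si_{L-1}$ and your formula for $q(n)$ are right; the paper reaches the same uniform lower bound on $c(n,n+1)$ via $\nu_o(\xi_n=L-4\mid \cS^{+,n})$ and the product formula \eqref{nu_no}.

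One caveat on the input you yourself flag as delicate. You are right that a two-sided estimate $\mu(n)\asymp C(n)\rho^n$ with \emph{matching} rate is indispensable: up-rates bounded below do not suffice on their own (a birth-and-death chain whose reversible measure has a ``dip'' at some $n_0$ has conductance, hence gap, of order $\mu(n_0)/\sum_{k>n_0}\mu(k)$, which can be arbitrarily small), and the paper's Cheeger step relies on the same fact even though it only records the one-sided decay from \eqref{expdec}. However, \eqref{3/2}--\eqref{32} alone do not deliver this estimate: what is needed is the local large-deviation asymptotics \eqref{doney} for the $(n+1)$-fold convolution, $\r_+^{\otimes(n+1)}(\sum_i\z_i=2L)=(n+1)(1+o(1))\r_+(2L)$, valid precisely in the regime $n\leq c_o\log L$; this is the content of Lemma \ref{lemma:ungros}, and together with the renewal representation $\pi(\chi=n)\propto (\l z_+)^n\,\r_+^{\otimes(n+1)}(\sum_i\z_i=2L)$ (and \eqref{ooo} to remove the conditioning on $\O^o$) it gives $\mu(n)\asymp(n+1)\rho^n$. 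The polynomial prefactor $(n+1)$ is harmless in the Hardy product. With \eqref{doney} substituted for \eqref{3/2}--\eqref{32} at that step, your argument is complete.
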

\proof
By Lemma \ref{equo}, $\pi(\chi=n)$ is exponentially decaying in $n$,
and the same applies to $\mu(n)=\pi(\chi=n\tc \O^o)$, up to $O(L^{-p})$ corrections; see \eqref{ooo}. 
To prove an upper bound on $\bar\l$ one can take the test function $\chi$ counting the number of crossings in the variational principle defining $\bar\l$. The variance of $\chi$ w.r.t.\ $\mu(\cdot)$ 
is a positive constant. On the other hand the Dirichlet form can be bounded from above by 
\begin{equation}\label{dirgamma}
4\sum_{n=0}^{m-1}\mu(n)[c(n,n+1)+c(n,n+2)]\,.
\end{equation}
Observe that for every $n$, and every $\si\in\cS^{+,n}$, one has 
\begin{equation}\label{indgamma}
\sum_{x\in O_L}\theta_x(\si)\big[
\ind(\si^x\in\cS^{+,n\pm 1})+\ind(\si^x\in\cS^{+,n\pm 2})\big]
=O(n)\,.
\end{equation}
 Indeed, each excursion can only contribute $O(1)$ to this sum since at equilibrium, in the delocalized phase, the expected number of zeros between consecutive crossings is finite (depending on $\l<1$); see  Lemma \ref{equo}. Now, \eqref{indgamma} implies that \eqref{dirgamma} is bounded above by 
 $c\,\sum_{n}\mu(n)\,n = O(1)$. This ends the proof of $\bar\l\leq c$ for some constant $c$.

To prove a lower bound on $\bar \l$ we can neglect the additional rates \eqref{cnpm2}. Let us check that 
the rate $c(n,n+1)$ in \eqref{cnpm} is bounded away from zero uniformly in $0\leq n\leq m-1$, $L\geq 2$ ($m$ being the maximal number of crossings allowed). 
We have
\begin{equation}\label{bacon}
c(n,n+1) \geq c\,\nu_o(\xi_{n}=L-4\tc\cS^{+,n})\,,
\end{equation}
for some constant $c=c(\l)>0$, where $\xi_n$ denotes the rightmost internal crossing.
We introduce some extra notation to characterize more explicitly the measures 
$\nu_{n,o}:=\nu_o(\cdot\tc\cS^{+,n})$. 
Let $\O^{+,o}_{j}$ denote the set of polymers 
$$\O^{+,o}_{j} =
\{\eta\geq 0:\;
\eta_{0}=\eta_j=0\,,\;\eta_{x+1}=\eta_x\pm 1\,,\;x=0,\dots,j-1\,, \;\text{and}\;N(\eta)\leq c_o\log L\}\,.$$
Note that these paths start at $0$ and end at $j$, so that $j$ must be even for this set to be non empty. 
Define the probability
$$
\r_{+,o}(j) = \frac{2^{-j}Z_j^{+,o}}{z_{+,o}}\,,\quad\; Z_j^{+,o} = 
\sum_{\eta\in\O^{o,+}_j}\l^{N(\eta)}\,,
$$
where $z_{+,o}$ is the normalization. From Lemma \ref{equo}, \eqref{3/2+} 
one has that 
$2^{-j}Z_j^{+,o}=O( j^{-3/2})$ for large $j$, so that the probability $\r_{+,o}$ is well defined. 
Then, it is not hard to check that the measure $\nu_{n,o}$ introduced above is given by
 \begin{equation}\label{nu_no}
\nu_{n,o}(x_1,\dots,x_n)=
\frac{\r_{+,o}(x_1+L)\r_{+,o}(x_2-x_1)\cdots\r_{+,o}(x_n-x_{n-1})\r_{+,o}(L-x_n)}{Z_{n,L}^o}
\end{equation}
where $\nu_{n,o}(x_1,\dots,x_n)=\nu_{n,o}(\xi_1=x_1,\dots,\xi_n=x_n)$, if 
$-L<x_1<\cdots<x_n<L$ is any allowed configuration of the crossing positions $\xi_1,\dots,\xi_n$, 
and $Z_{n,L}^o$ is the normalizing constant. 
In particular, $\nu_{n,o}$ is a product measure $\r_{+,o}^{\otimes (n+1)}$
over $n+1$ positive 
increments $x_{i+1}-x_i$ (with $x_0=-L,x_{n+1}=L$), conditioned to have 
$\sum_{i=0}^n(x_{i+1}-x_i)=2L$. 

Going back to \eqref{bacon},  with the notation in \eqref{nu_no} we have 
$$
\nu_o(\xi_{n}=L-4\tc\cS^{+,n}) = \frac{\r_{+,o}(4)\r_{+,o}^{\otimes(n)}(\sum_{i=1}^{n}(\xi_{i}-\xi_{i-1})=2L-4)}{\r_{+,o}^{\otimes(n+1)}(\sum_{i=1}^{n+1}(\xi_{i}-\xi_{i-1})=2L)}\,.
$$
Since $\sqrt L\gg m\geq n$, and $\r_{+,o}(4)>0$ uniformly, 
with the same arguments of Lemma \ref{lemma:ungros} below
one easily sees that $\nu_o(\xi_{n}=L-4\tc\cS^{+,n}) >0$ uniformly. (We omit the details here to avoid repetitions). 

Once we have that the rate $c(n,n+1)$ in \eqref{cnpm} is bounded away from zero uniformly, 
a standard bound using the isoperimetric constant and Cheeger's inequality (cf.\ e.g.\ \cite{LS})
implies that $\bar \l$ is bounded away from zero uniformly. 
Alternatively, the same conclusion can be derived from more refined bounds 
for birth and death chains \cite{Miclo}.  
\qed

\medskip

We return to the application of Proposition \ref{jerr} with decomposition \eqref{decomposp}. The constant $\g$ is now given by 
$$
\g=\max_n \max_{\si\in\cS^{+,n}}\sum_{x\in O_L}\theta_x(\si)\big[
\ind(\si^x\in\cS^{+,n\pm 1})+\ind(\si^x\in\cS^{+,n\pm 2})\big]\,.
$$
Recall \eqref{indgamma}.
 Since $n\leq m = O(\log L)$ we obtain that $\g = O(\log L)$. In conclusion, using \eqref{jerr2} and Lemma \ref{gapbar},
 we arrive at
 \begin{equation}\label{jerr3}
\gap^{{\cS_o}} \geq c\,(\log L)^{-1}\min_{1\leq n\leq m}\gap^{+,n}\,.
\end{equation}
Thanks to \eqref{jerr3}, the proof of Proposition \ref{gapes} has been reduced to the proof of a lower bound $\gap^{+,n}=\O(L^{-5/2-\e})$, uniformly in $n\leq c_o\log L$. 
 Note that the case $n=0$ does not appear in \eqref{jerr3}. Indeed, 
 in that case 
  the corresponding equilibrium is concentrated on the single ``all plus'' element $\cS^{+,0}$. We start with the simplest case, i.e.\ $n=1$, which can be analyzed by rather standard arguments. 
 
   \begin{lemma}\label{52}
  There exists a constant $c=c(\l)>0$,
such that 
\begin{equation}\label{521}
\gap^{+,1}\geq c\,L^{-5/2}\,.
\end{equation}
  \end{lemma}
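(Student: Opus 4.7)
The plan is to identify the dynamics restricted to $\cS^{+,1}$ with a nearest-neighbor birth-and-death chain on the single crossing position $\xi \in E_L$, and then apply a standard weighted Hardy/Miclo inequality for birth-and-death chains. Any $\si\in\cS^{+,1}$ is entirely determined by the position $\xi$ of its unique crossing ($+1$ to the left of $\xi$, $-1$ to the right), and the only single-site flips that preserve $\cS^{+,1}$ are those at $\xi\pm 1$, which translate the crossing by $\mp 2$.

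Next I would read off the invariant measure and the transition rates. From \eqref{nu_no} with $n=1$, together with \eqref{3/2+} and Lemma~\ref{equo}, the invariant measure of the reduced chain satisfies
\begin{equation*}
\mu(\xi)\,\asymp\, L^{3/2}(L+\xi)^{-3/2}(L-\xi)^{-3/2},
\end{equation*}
in agreement with \eqref{rev32}. A direct inspection of \eqref{ratesr} shows that the rate for the transition $\xi\to\xi+2$ equals
\begin{equation*}
\theta_{\xi+1}(\si) \,=\, \tfrac12\,\pi\bigl(\eta_{\xi+2}=0\tc\Omega^o_\si\bigr),
\end{equation*}
since the only $\eta\in\O^o_\si$ for which a flip at $\xi+1$ stays in $\cS^{+,1}$ is when $\eta_\xi=\eta_{\xi+2}=0$ (in which case the heat-bath rate is $1/2$). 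The conditional probability on the right is, up to $O(L^{-p})$ corrections coming from the $N\leq c_o\log L$ constraint, the partition-function ratio $Z^{+,o}_{L-\xi-2}/Z^{+,o}_{L-\xi}$, which by \eqref{3/2+} converges to $1/4$ as $L-\xi\to\infty$. Consequently $\theta_{\xi\pm 1}(\si)\geq c(\l)>0$ uniformly in $\xi$; the boundary regions where one of $L\pm\xi$ is $O(1)$ are handled by direct inspection of the (small) finite-length partition functions.

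With these ingredients, I would invoke the Miclo/Hardy-type upper bound for birth-and-death chains,
\begin{equation*}
\trel^{+,1} \,\leq\, C\,\max_{a\in E_L}\Bigl(\sum_{\xi\geq a}\mu(\xi)\Bigr)\Bigl(\sum_{\xi<a}\frac{1}{\mu(\xi)\,\theta_{\xi+1}(\si)}\Bigr).
\end{equation*}
By the reflection symmetry of $\mu$ it suffices to take $a=0$, for which the first factor equals $1/2+o(1)$. For $\xi<0$ one has $L-\xi\asymp L$ and hence $1/\mu(\xi)\asymp (L+\xi)^{3/2}$, so that the second factor is bounded by $c\sum_{k=2,4,\dots,2L-2}k^{3/2}\asymp L^{5/2}$, yielding $\trel^{+,1}=O(L^{5/2})$, i.e.\ $\gap^{+,1}\geq c\,L^{-5/2}$.

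The main obstacle I anticipate is the uniform positive lower bound on the rates $\theta_{\xi\pm 1}(\si)$ in presence of the $\Omega^o$ restriction: the explicit ratio $Z^{+,o}_{L-\xi-2}/Z^{+,o}_{L-\xi}$ is easiest to control without the cap $N\leq c_o\log L$, so one needs the quantitative statement (an easy consequence of Lemma~\ref{equo}) that for $c_o$ large enough the cap alters the relevant ratios only by a multiplicative $1+O(L^{-p})$ factor. Nothing else in the argument is more than routine, since the Hardy sum is dominated by the bulk region where the ratios are well-approximated by their unconstrained values.
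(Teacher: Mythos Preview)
Your reduction is the same as the paper's: both identify the $\cS^{+,1}$-dynamics with a nearest-neighbor birth--death chain on the crossing position, note that the reversible measure obeys $\mu(\xi)\asymp L^{3/2}(L+\xi)^{-3/2}(L-\xi)^{-3/2}$, and show the jump rates are uniformly of order one. The paper then proves the Poincar\'e inequality by a bare-hands path/Cauchy--Schwarz estimate,
\[
\var_{\rho_0}(f)\;\le\;2L\,\cE_0(f,f)\,\max_{j}\sum_{x\le j}\sum_{y>j}\frac{\rho_0(x)\rho_0(y)}{\rho_0(j)}\,,
\]
and verifies the inner maximum is $O(L^{3/2})$; you instead cite Miclo's Hardy criterion. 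In content these are equivalent.

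There is, however, a genuine gap in your last step. The inequality you quote (with the sum $\sum_{\xi<a}$ running from the left endpoint) is the Hardy bound for the \emph{Dirichlet} eigenvalue, not for the spectral gap; Miclo's Poincar\'e criterion requires splitting at a median $m$ and bounding $B_+=\max_{a>m}\mu([a,L])\sum_{m\le\xi<a}(\mu(\xi)\theta_\xi)^{-1}$ together with the symmetric $B_-$. More importantly, the sentence ``by the reflection symmetry of $\mu$ it suffices to take $a=0$'' is not a valid reduction: the product you wrote is not invariant under $a\mapsto -a$, and in either version the maximum is \emph{not} attained at $a=0$ (indeed, in the median-based version the inner sum is empty at $a=0$). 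What symmetry actually buys you is that the median is at $0$ and $B_+=B_-$; you must then still maximize over $a>0$. Since $\mu([a,L])\asymp 1$ for every $a\in(0,L-2]$ (the mass concentrates near the endpoints) while $\sum_{0\le\xi<a}1/\mu(\xi)$ grows to $\asymp L^{5/2}$ as $a\to L$, one does obtain $B_+=O(L^{5/2})$, so your conclusion survives once the maximization is carried out correctly.
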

  \proof
  When $n=1$, the process restricted to $\cS^{+,1}$ consists of a nearest neighbor random walk on 
 $E_L$,  the sites in $\{-L+2,\dots,L-2\}$ with the same parity as $L$. Letting $\si(x)$ denote the 
 element of $\cS^{+,1}$ with unique crossing at $x$, the corresponding jump rates are given by
 \begin{equation}\label{522}
  c(x,x\pm 2) = \sum_{y\in O_L} \theta_y(\si(x))\ind(\si(x)^y = \si(x\pm 2))\,.
 \end{equation}  
 These rates are of order $1$ (except at $x=-L+2$ where $c(-L+2,-L)=0$ and at $x=L-2$ where 
 $c(L-2,L)=0$) since there is a uniformly positive probability for the polymer conditioned to have signs $\si(x)$ of having a $0$ at $x\pm2$. Moreover, the reversible invariant probability measure $\r(x)$ for this chain is
  given by 
\begin{equation}\label{5210} 
  \r(x) = \pi^o(\eta_{x-1},\eta_x,\eta_{x+1} = (1,0,-1)\tc\sign(\eta)\in\cS^{+,1})\,.
 \end{equation}
  Recalling \eqref{ooo} and Lemma \ref{equo}, the event $\sign(\eta)\in\cS^{+,1}$ has uniformly positive probability at equilibrium, and $\r(x)\asymp \r_0(x)$, where $\r_0$ is the probability  
 \begin{equation}\label{523}
\r_0(x)=\frac{(L+x)^{-3/2}(L-x)^{-3/2}}{z_L}\,,\quad x\in E_L\,,
\end{equation}
where $z_L$ is the normalizing constant. 
 Since the rates \eqref{522}
 are of order 1, a standard comparison argument shows that it is sufficient to prove 
 the bound  $\gap_0^{+,1}\geq c\,L^{-5/2}$ for the gap of 
 the chain with reversible measure $\r_0$ given by \eqref{523} with rates defined by 
 $c_0(x,x+2)=1$ and $c_0(x,x-2)=\r_0(x-2)/\r_0(x)$. The latter process has the Dirichlet form
\begin{equation}\label{5201}
 \cE_0(f,f)=\sum_{x=-L+2}^{L-4}\r_0(x)[f(x+2)-f(x)]^2\,.
 \end{equation}
 On the other hand, writing $f(y)-f(x)=\sum_{j=x}^{y-2}[f(j+2)-f(j)]$, $x<y$, and using Schwarz' inequality, the variance $\var_{\r_0}(f)$ can be bounded above by:
 \begin{align*}
 \var_{\r_0}(f) &= \frac12\sum_{x,y}\r_0(x)\r_0(y)[f(y)-f(x)]^2\\
 &\leq 2L\sum_{j=-L+2}^{L-4}\r_0(j)[f(j+2)-f(j)]^2
 \sum_{x\leq j}\sum_{y>j}\frac{\r_0(x)\r_0(y)}{\r_0(j)}\,.
 \end{align*}
From \eqref{523} one has 
$$
\max_{j} \sum_{x\leq j}\sum_{y>j}\frac{\r_0(x)\r_0(y)}{\r_0(j)} = O(L^\frac32)\,.
 $$
 It follows that $\gap_0^{+,1}\geq c\,L^{-5/2}$, which implies \eqref{521}. \qed

\medskip
\begin{remark} \rm The bound of Lemma \ref{52} is optimal in the sense that 
\begin{equation}\label{5202}
\gap^{+,1}\leq c^{-1}\,L^{-5/2}\,.
\end{equation}
This can be seen by taking a test function $\varphi(x)$ in the variational principle for $ \gap^{+,1}$, of the form $\varphi(x) = g(x/L)$, where $g:[-1,1]\mapsto\bbR$ is given by 
$g(s)=-1$ if $s<-1/2$, $g(s)=1$ if $s>1/2$, and is linear between $-1/2$ and $1/2$. 
With this choice one has that the variance of $\varphi$ w.r.t.\ $\r$ defined by \eqref{5210} is of order $1$, while the Dirichlet form - given by \eqref{5201} with $\r$ instead of $\r_0$ - is bounded above by 
the probability of $x\in[-L/2,L/2]$ times $L^{-2}$, i.e.\ $L^{-5/2}$. This implies \eqref{5202}.
\end{remark}
\medskip

We turn to the proof of a lower bound on $\gap^{+,n}$ for $n>1$. Recall the definition \eqref{nu_no} of the measure $\nu_{n,o}$.
We introduce a further dynamics. 
We view the $\xi_i$ as particle positions. Each particle $i=1,\dots,n$ has an independent Poisson clock with parameter 1. When particle $i$ rings, we freeze all positions $\xi_k$, $k\neq i$, and update $\xi_i$ with the new position $\xi'_i$ sampled from the conditional distribution 
$\nu_n^i:=\nu_{n,o}(\cdot\tc\xi_k, \,k\neq i)$. 
The Dirichlet form of this process is given by 
 \begin{equation}\label{dirieqn}
\cE^{n,o}_{\rm eq}(f,f) = \sum_{i=1}^n\nu_{n,o}\left[\var_{\nu_n^i}(f)\right]\,,
\end{equation}
where $\var_{\nu_n^i}(f) = \nu_n^i(f^2) - \nu_n^i(f)^2$ is the variance conditioned on the values of $\xi_k, \,k\neq i$. 
Let $\gap_{\rm eq}^{n,o}$ denote the associated spectral gap:
  \begin{equation}\label{gapeqn}
\gap_{\rm eq}^{n,o} = \inf_{f}\frac{\cE^{n,o}_{\rm eq}(f,f) }{\var_{\nu_{n,o}}(f)}\,,
\end{equation}
where the infimum ranges over all functions of the crossing positions $\xi_1,\dots,\xi_n$. 
The next estimate allows one to reduce the proof of Proposition \ref{gapes} to the proof of a lower bound 
$\gap^{n,o}_{\rm eq}=\O(L^{-\e})$, for every $n\leq c_o\log L$. 
 \begin{lemma}\label{neq}
  There exists a constant $c=c(\l)>0$,
such that for all $n\geq 1$.
\begin{equation}\label{neq1}
\gap^{+,n}\geq c\,L^{-5/2} \gap^{n,o}_{\rm eq}\,.
\end{equation}
  \end{lemma}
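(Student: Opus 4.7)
The plan is to prove the Dirichlet form comparison $\cE^{n,o}_{\rm eq}(f,f)\le c^{-1}L^{5/2}\,\cE^{+,n}(f,f)$ for every $f$, where $\cE^{+,n}$ denotes the Dirichlet form of the sign dynamics $\cG$ restricted to $\cS^{+,n}$; since both chains have the same reversible measure $\nu_{n,o}$, the variational characterizations of $\gap^{+,n}$ and $\gap^{n,o}_{\rm eq}$ immediately yield \eqref{neq1}. First I would observe that a flip $\si\to\si^x$ preserves the number $\chi(\si)=n$ of crossings precisely when it shifts a single crossing $\xi_j$ by $\pm 2$. Consequently, restricted to $\cS^{+,n}$ the sign dynamics is a system of $n$ particles $\xi_1<\cdots<\xi_n$ in $E_L$ performing nearest-neighbor moves under the exclusion constraint $\xi_{j+1}\ge\xi_j+2$, and, by the equilibrium estimates of Section \ref{polymers} together with \eqref{ratesr}, the corresponding rates $\theta_x(\si)$ are bounded between two positive constants depending only on $\l$.

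Next, fix $i\in\{1,\dots,n\}$ and condition on $\xi_{-i}:=(\xi_k)_{k\neq i}$. By \eqref{nu_no} the conditional law $\nu_n^i$ of $\xi_i$ has density proportional to $\r_{+,o}(\xi_i-\xi_{i-1})\,\r_{+,o}(\xi_{i+1}-\xi_i)$ on $E_L\cap(\xi_{i-1},\xi_{i+1})$. Since $\r_{+,o}(j)\asymp j^{-3/2}$ by Lemma~\ref{equo} and \eqref{3/2+}, this density has the same scaling form as the measure $\r_0$ of \eqref{523}, with $L$ replaced by $(\xi_{i+1}-\xi_{i-1})/2$. Repeating the Hardy-type argument of Lemma~\ref{52} on this sub-interval yields a spectral gap bounded below by $c\,(\xi_{i+1}-\xi_{i-1})^{-5/2}\ge c\,L^{-5/2}$ for the local random walk of $\xi_i$, uniformly in $\xi_{-i}$ and in $i$; the uniform two-sided control on the rates noted above allows one to transfer this bound to the conditional dynamics of $\xi_i$ with rates $\theta_x$. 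Denoting by $\cE^{+,n}_i(\cdot\,|\,\xi_{-i})$ the Dirichlet form of this conditional dynamics, Poincaré's inequality gives
\begin{equation*}
\var_{\nu_n^i}(f)\le c^{-1}L^{5/2}\,\cE^{+,n}_i(f,f\,|\,\xi_{-i}).
\end{equation*}

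The final step is to average over $\xi_{-i}$ and sum over $i$. Since every admissible transition in $\cS^{+,n}$ moves exactly one crossing, one has the exact decomposition
\begin{equation*}
\sum_{i=1}^n\nu_{n,o}\!\left[\cE^{+,n}_i(f,f\,|\,\xi_{-i})\right]=\cE^{+,n}(f,f),
\end{equation*}
which combined with the previous display and \eqref{dirieqn} gives $\cE^{n,o}_{\rm eq}(f,f)=\sum_i\nu_{n,o}[\var_{\nu_n^i}(f)]\le c^{-1}L^{5/2}\,\cE^{+,n}(f,f)$, as required. The main obstacle I anticipate is justifying the uniform lower bound $c(\xi_{i+1}-\xi_{i-1})^{-5/2}$ on the conditional gap: one must check that the Hardy-type estimate $\max_j\sum_{x\le j<y}\r(x)\r(y)/\r(j)=O((\xi_{i+1}-\xi_{i-1})^{3/2})$ used in Lemma~\ref{52} is robust under restriction to an arbitrary sub-interval and under replacing $\r_0$ with a product of two copies of $\r_{+,o}$. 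This reduces to the scaling $\r_{+,o}(j)\asymp j^{-3/2}$ but requires some care near the endpoints of the sub-interval, where $\r_{+,o}$ may deviate from its asymptotic form by a bounded multiplicative factor; such a factor is harmless since it only shifts the constant $c$.
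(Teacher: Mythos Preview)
Your proposal is correct and follows essentially the same route as the paper's proof: apply the one-particle Poincar\'e inequality of Lemma~\ref{52} on each sub-interval $\{\xi_{i-1},\dots,\xi_{i+1}\}$ to bound $\var_{\nu_n^i}(f)$ by $c^{-1}(\xi_{i+1}-\xi_{i-1})^{5/2}\le c_1^{-1}L^{5/2}$ times the local Dirichlet form, then take the $\nu_{n,o}$-expectation, sum over $i$, and use the identity $\sum_i \nu_{n,o}[\cE^i(f,f)]=\cE^{+,n}(f,f)$. The paper simply invokes Lemma~\ref{52} directly on the sub-interval rather than reworking the Hardy-type estimate, but the argument and the structure of the Dirichlet-form comparison are identical.
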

  \proof
For $n=1$ this coincides with the result of Lemma \ref{52}, since in this case 
$\nu_{1,o}[\var_{\nu_1^1}(f)]$ coincides with $\var_{\nu_{1,o}}(f)$. 
The general case follows from the observation that Lemma \ref{52} can be applied with 
the segment $\{\xi_{i-1},\dots,\xi_{i+1}\}$ replacing the usual $\{-L,\dots,L\}$, to obtain
$$
\var_{\nu_n^i}(f)\leq \frac1c\,(\xi_{i+1}-\xi_{i-1})^{5/2}\cE^i(f,f)\leq \frac1{c_1}\,L^{5/2}\cE^i(f,f)\,,
$$
where $c_1=c/2^{5/2}$, and $\cE^i(f,f)$ denotes the Dirichlet form of the nearest neighbor random walk on $E_L$, corresponding to the rates
\eqref{522} (with the segment $\{\xi_{i-1},\dots,\xi_{i+1}\}$ in place of $\{-L,\dots,L\}$). 
Taking the $\nu_{n,o}$-expectation, and summing over $i=1,\dots,n$ one obtains the estimate
$$
\cE^{n,o}_{\rm eq}(f,f) \leq \frac1{c_1}\,L^{5/2}\sum_{i=1}^n\nu_{n,o}\left[\cE^i(f,f)\right]\,.$$
The desired conclusion follows from the observation that 
$\sum_{i=1}^n\nu_{n,o}\left[\cE^i(f,f)\right]=\cE^{+,n}(f,f)$, where $ \cE^{+,n}(f,f)$ is the Dirichlet form of the process restricted to $\cS^{+,n}$ with spectral gap
given by $\gap^{+,n}$. \qed

\subsection{Lower bound on $\gap^{n,o}_{\rm eq}$}\label{essent}
To complete the proof we need the lower bound $ \gap^{n,o}_{\rm eq}=\O(L^{-\e})$.
We first remove the restriction $N(\eta)\leq c_o\log L$ in the definition of the measure $\nu_{n,o}$. Namely, introduce the probability measure
\begin{equation}\label{nu_n}
\nu_{n}(x_1,\dots,x_n)=
\frac{\r_{+}(x_1+L)\r_{+}(x_2-x_1)\cdots\r_{+}(x_n-x_{n-1})\r_{+}(L-x_n)}{Z_{n,L}}\,,
\end{equation}
defined by the kernel
\begin{equation}\label{zplus}
\r_+(j)=\frac{2^{-j}Z_j^{+}}{z_+}\,,\quad\; Z_j^{+}=\sum_{\eta\in\O^{+}_j}\l^{N(\eta)}\,,
\end{equation}
where $$
\O^{+}_{j} =\{\eta\geq 0:\;
\eta_{0}=\eta_j=0\,,\;\eta_{x+1}=\eta_x\pm 1\,,\;x=0,\dots,j-1\}\,.$$
Note that \eqref{nu_n} coincides with \eqref{nu_no} except for the removal of the constraint $N(\eta)\leq c_o\log L$ in the definition of the kernel $\r_+$. As in \eqref{ooo} one can check that $\r_+$ is equal to $\r_{+,o}$ up to $O(L^{-p})$ corrections for some large $p>0$ and therefore 
$\nu_n(x_1,\dots,x_n)=\nu_{n,o}(x_1,\dots,x_n)(1+O(L^{-p}))$,  uniformly. 
Proceeding as in \eqref{dirieqn} and \eqref{gapeqn} we define 
 \begin{equation}\label{dirieqn1}
\cE^{n}_{\rm eq}(f,f) = \sum_{i=1}^n\nu_{n}\left[\var_{\nu_n^i}(f)\right]\,,
\end{equation}
and $\gap^n_{\rm eq}$,  the spectral gap associated to the measure $\nu_n$ and the Dirichlet form \eqref{dirieqn1}. From the previous observations we see that $\var_{\nu_n}(f)\leq c\,\var_{\nu_{n,o}}(f)$
and $\cE^{n,o}_{\rm eq}(f,f)\leq c\,\cE^{n}_{\rm eq}(f,f)$, for some constant $c>0$, 
for every function $f$. Therefore $\gap^n_{\rm eq}\leq c^2\,\gap^{n,o}_{\rm eq}$. 
For later purposes it is important to keep track of the dependence on $L$ in our notation, 
and therefore we write $\gap^n_{\rm eq}(L)$ below.
The desired bound $ \gap^{n,o}_{\rm eq}=\O(L^{-\e})$ follows from the comparison mentioned above and 
\begin{proposition}
 \label{lemma:epsilon}
For any $\e>0$, there exists $L_0(\e)>0$ such that for $L\geq L_0$ and $n \leq c_o\log L$
\begin{equation}
\label{bodes}
\gap_{\rm eq}^n(L)\geq L^{-\e}\,.
\end{equation}
\end{proposition}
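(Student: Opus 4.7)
The plan is to prove $\gap^n_{\rm eq}(L)\geq L^{-\e}$ in two stages, following the two-step outline announced just before the proposition. \emph{Stage one} handles small $n\leq \e_1\log L$ (for a sufficiently small $\e_1=\e_1(\e)>0$) by a direct coupling argument. \emph{Stage two} reduces the general case $n\leq c_o\log L$ to the small-$n$ case via a block-dynamics comparison.

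For the small-$n$ case, couple two copies of the particle dynamics on the crossing positions $\xi,\xi'$ by a common Poisson clock at each particle $i$. When $i$ rings, draw $\xi_i$ and $\xi'_i$ from a maximal coupling of $\nu_n^i(\cdot\mid \xi_{i-1},\xi_{i+1})$ and $\nu_n^i(\cdot\mid \xi'_{i-1},\xi'_{i+1})$. The key analytic input is the kernel $\r_+(j)\asymp j^{-3/2}$ of \eqref{zplus}--\eqref{3/2+}, which renders the conditional law Lipschitz in the boundary data: if the two conditioning pairs differ by $\Delta$ and all four surrounding gaps are of macroscopic order $L^{1-O(\e_1)}$, then a direct calculation with the kernel $\r_+$ gives
\begin{equation*}
\|\nu_n^i(\cdot\mid \xi_{i-1},\xi_{i+1})-\nu_n^i(\cdot\mid \xi'_{i-1},\xi'_{i+1})\|=O(\Delta/L^{1-O(\e_1)}).
\end{equation*}
Under $\nu_n$ with $n\leq \e_1\log L$, all gaps $\xi_{i+1}-\xi_i$ are of order $L/n\gg L^{1-\e_1}$ with probability $1-o(1)$ (using $\r_+(j)\asymp j^{-3/2}$ and Lemma \ref{lemma:ungros}-type tightness for the product-measure $\nu_n$). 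Sweeping updates from the two extremal particles inward then contracts the discrepancy round by round; over a window of length $L^{\e/2}$, in which each site rings $\Theta(L^{\e/2})$ times, the two copies coincide with probability $1-o(1)$. Translating via Lemma~\ref{le:monopm} and \eqref{trelvar} applied to the finite Markov chain on crossing positions yields $\gap^n_{\rm eq}(L)\geq L^{-\e/2}$.

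For general $n\leq c_o\log L$, partition $\{1,\dots,n\}$ into consecutive blocks $B_1,\dots,B_r$ of size at most $k:=\lfloor\e_1\log L/2\rfloor$, with successive blocks overlapping in one particle to ensure irreducibility, so $r=O(n/k)=O(1/\e_1)$. Introduce the block dynamics which, at rate one per block, resamples the positions of the particles inside that block from their joint conditional equilibrium given the particles outside. Writing each block update as a finite composition of single-site updates and exploiting reversibility yields a Dirichlet-form comparison
\begin{equation*}
\cE^n_{\rm eq}(f,f)\;\geq\; c\,\gap_{\rm in}\,\cE_{\rm block}(f,f),\qquad
\gap_{\rm in}:=\min_{j,\,\xi_{\partial B_j}}\gap^{|B_j|}_{\rm eq}(L_{B_j}),
\end{equation*}
with $L_{B_j}\leq L$ the length available to $B_j$. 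Stage one applied inside each block (with the two flanking particles $\xi_{\partial B_j}$ playing the role of absorbing boundary, $|B_j|\leq k\leq \e_1\log L$) gives $\gap_{\rm in}\geq L^{-\e/2}$. The block dynamics itself lives on only $r=O(1/\e_1)$ super-particles with a heavy-tailed exchange law, and a second application of the same coupling argument (now with $k$-sized blocks as super-sites) gives $\gap_{\rm block}\geq L^{-\e/2}$. Combining the two bounds produces \eqref{bodes} provided $\e_1$ is chosen small enough in terms of $\e$.

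The main obstacle is the small-$n$ coupling. Because $\r_+$ decays only polynomially, configurations with one or several anomalously small gaps occur with polynomial (not exponential) probability and can trap a naive coupling of the conditional laws. A clean implementation requires restricting the dynamics to a high-probability ``good'' event where all gaps are macroscopic (analogous in spirit to the restriction to $\Omega^o$ in Section~\ref{cinquemezzi}), controlling the exit probability from this event, and propagating the Lipschitz contraction through a carefully ordered sweep rather than a naive union bound, so that the cumulative discrepancy over $O(nL^{\e/2})$ updates stays below the target $L^{-\e/2}$.
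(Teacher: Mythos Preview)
Your Stage-one argument rests on the claim that, under $\nu_n$ with $n\le\e_1\log L$, ``all gaps $\xi_{i+1}-\xi_i$ are of order $L/n\gg L^{1-\e_1}$ with probability $1-o(1)$.'' This is false, and in fact the opposite of what the heavy-tailed kernel $\rho_+(j)\asymp j^{-3/2}$ produces. Since $\rho_+$ has infinite mean, conditioning $n+1$ i.i.d.\ increments on summing to $2L$ forces one increment to absorb essentially all of $2L$ while the remaining $n$ stay $O(1)$; this is precisely the content of Lemma~\ref{lemma:ungros}, which you cite but misread. Hence the ``good'' set on which your Lipschitz contraction would operate has $\nu_n$-probability $o(1)$, not $1-o(1)$, and the coupling cannot be confined to it. Your last paragraph acknowledges that small gaps are a difficulty, but the proposed fix (restrict to macroscopic gaps and control exits) cannot work when the restricted set is itself atypical.

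The paper's approach to the small-$n$ case is both simpler and immune to this issue: rather than a Lipschitz coupling, one observes that at every update the conditional law \eqref{arcsine} puts mass at least some $\alpha>0$ on the event $\xi_i'=\xi_{i-1}+2$, \emph{uniformly} in the gap length. Running the updates in order $1,2,\dots,n$ then drives the configuration to the packed state $(-L+2,-L+4,\dots,-L+2n)$ with probability $\ge\tfrac12\alpha^n$ within time $n^2$, yielding the $L$-independent bound $\gap^n_{\rm eq}(L)\ge c\,e^{-n/c}$ (Lemma~\ref{lemma:epsilon1}). For $n\le\e_1\log L$ with $\e_1$ small this already gives $L^{-\e}$.

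For Stage two the paper also uses a block decomposition into $\Delta=n/K$ blocks with $K=\e_1\log L$, so $\Delta=O(1)$ as you propose, but the bound on the block-dynamics gap (Lemma~\ref{lemma:gapbk}) is not obtained by viewing blocks as ``super-particles'' and rerunning the Stage-one coupling. Instead it exploits Lemma~\ref{lemma:ungros} directly: when a block is resampled, with probability $\ge(2(K+1))^{-1}$ all of its particles land in an interval of length $L^{1/3}$ near one endpoint, and this is used to sweep both copies into a common small window and then match them. Your sketch ``a second application of the same coupling argument'' does not supply this mechanism.
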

As a preliminary step towards the proof of Proposition \ref{lemma:epsilon}, we establish the following lemma.
\begin{lemma}
 \label{lemma:epsilon1}
 There exists $c>0$ such that for any $n$, uniformly in $L\geq n$:
\begin{equation*}
\gap_{\rm eq}^n(L)\geq c\,e^{-n/c}\,.
\end{equation*}
\end{lemma}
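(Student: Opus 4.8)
The plan is to show that the chain with Dirichlet form \eqref{dirieqn1} and reversible law $\nu_n$ has mixing time $O(e^{Cn})$ uniformly in $L$; since its relaxation time $1/\gap_{\rm eq}^{n}(L)$ is at most its mixing time (cf.\ \eqref{trelmix}), this gives $\gap_{\rm eq}^{n}(L)\ge c\,e^{-n/c}$. The mixing bound will come from a coupling of two copies of the chain, driven by common Poisson clocks, which forces them to agree on all $n$ crossing coordinates by time $O(e^{Cn})$ with probability $\ge 3/4$, after which the two copies are kept equal.

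The only input needed is a uniform lower bound on the probability that, upon an update, a crossing lands immediately to the right of its left neighbour. From the definition \eqref{zplus} of $\r_+$ and the estimate \eqref{3/2+} one has $\r_+(j)\asymp j^{-3/2}$, hence for the convolution $\r_+^{*2}(m):=\sum_{k}\r_+(k)\r_+(m-k)$ one has $\r_+^{*2}(m)\asymp\r_+(m)\asymp\r_+(m-2)$, uniformly in $m$. The conditional law of a crossing $\xi_i$ given neighbouring crossings (or walls) at $a<b$ — where necessarily $b-a\ge 4$ — assigns to the site $a+2$ the mass $\r_+(2)\,\r_+(b-a-2)/\r_+^{*2}(b-a)$, which is therefore $\ge c_0>0$ with $c_0$ independent of $a,b,i,n,L$; and if $b-a=4$ then $a+2$ is the \emph{only} admissible value, so the update is deterministic. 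This is the only point where the heavy tail of the excursion length is used.

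Couple two copies $\xi(t),\xi'(t)$ as follows: at a ring of clock $i$, if the two left neighbours of particle $i$ coincide, use the coupling of the two conditional laws that sets both updated coordinates equal to that common left neighbour plus $2$ with probability $\ge c_0$, and otherwise couple arbitrarily; as soon as $\xi(t)=\xi'(t)$, switch to the identity coupling so the copies stay equal. Let $K_t\in\{0,\dots,n\}$ be the largest $k$ with $\xi_j(t)=\xi'_j(t)=-L+2j$ for all $j\le k$, and let $\tau:=\inf\{t:\,K_t=n\}$ be the coalescence time. Using the two facts above one checks that, before time $\tau$: a ring of clock $j\le K_t-1$ does not change $K_t$ (particle $j$ then has both neighbours at consecutive admissible sites and is deterministically reset to $-L+2j$ in both copies); a ring of clock $K_t$ leaves $K_t$ unchanged with probability $\ge c_0$ and otherwise decreases it by exactly $1$ (only particle $K_t$ can lose its coalescence, since particles $1,\dots,K_t-1$ do not move); a ring of clock $K_t+1$ raises $K_t$ by $\ge 1$ with probability $\ge c_0$ (its left neighbour is the packed site $-L+2K_t$, or the wall $-L$ when $K_t=0$) and otherwise leaves it unchanged; a ring of a clock $\ge K_t+2$ does not affect $K_t$. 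Hence $\tau$ is stochastically dominated by the hitting time of $n$, started from $0$, of a birth--death chain on $\{0,\dots,n\}$ with up-rate $c_0$ and down-rate $1$, whose mean is $O(c_0^{-n})=O(e^{Cn})$ by the classical birth--death formula. By Markov's inequality and the Markov property, $\bbP(\tau>t)\le 1/4$ for $t=O(e^{Cn})$, uniformly in the two initial conditions; since the copies coincide after $\tau$, the mixing time is $O(e^{Cn})$, and therefore $\gap_{\rm eq}^{n}(L)\ge c\,e^{-n/c}$.

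The delicate part is the bookkeeping for $K_t$: one must verify that the four effects above exhaust what a clock ring can do and, in particular, that a failed update of the frontier particle $K_t$ destroys the coalescence of particle $K_t$ only, so that $K_t$ decreases by exactly one; the admissibility constraints (crossings strictly increasing with the correct parity) are precisely what make the ``both neighbours packed $\Rightarrow$ deterministic update'' observation work, and must be threaded through. Everything else reduces to the elementary tail estimate of the second paragraph and to the birth--death hitting-time computation. One could instead argue by comparison: enlarge the chain by a rate-$1$ move ``resample the whole crossing configuration from $\nu_n$'' (which alone has gap $\ge 1$), and for each such transition take the canonical path through the left-packed configuration $(-L+2,\dots,-L+2n)$; the congestion of every edge is controlled by the same local estimate and is $e^{O(n)}$, again giving $\gap_{\rm eq}^{n}(L)\ge e^{-O(n)}$.
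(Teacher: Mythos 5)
Your proof is correct and rests on the same key ingredient as the paper's: the uniform lower bound $c_0$ (the paper's $\alpha$) on the probability that an updated crossing lands immediately to the right of its left neighbour, used to drive both copies toward the left-packed configuration $(-L+2,\dots,-L+2n)$ and to conclude, via a coupling bound on the mixing time that is $e^{O(n)}$ uniformly in $L$, the gap estimate through $\trel\leq\tmix$. The paper organizes the coupling differently --- it waits for an ordered sequence of $n$ clock rings within time $n^2$ (an event of probability at least $1/2$) so that each copy hits the minimal configuration with probability at least $\alpha^{n}$, and then runs an independent coupling with success probability $\alpha^{2n}/4$ per time block of length $n^2$ --- but this is only a difference in bookkeeping; your birth--death domination of the coalescence frontier $K_t$ yields the same $e^{O(n)}$ bound.
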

The crucial point of the above estimate is that it does not depend on the size of the system $L$, 
but only on the number of particles $n$.  Note that Lemma \ref{lemma:epsilon1} gives the required lower bound 
\eqref{bodes}
if we take $n\leq \e_1\log L$ with $\e_1$ suitably small. The case of larger $n$ will be considered afterwards.

\subsubsection{Proof of Lemma \ref{lemma:epsilon1} }
Every particle has an independent Poisson(1) clock. When particle $i$ rings we update its position 
according to the equilibrium measure conditioned on the neighboring positions 
$\xi_{i-1}$ and  $\xi_{i+1}$, that is the probability for the $i$-th particle to be in position 
$\xi_i'=\xi_{i-1}+k$, $k=2,4,\dots,\xi_{i+1}-\xi_{i-1}-2$, is given by 
\begin{equation}\label{arcsine}
\frac{\r_+(k)\r_+(\xi_{i+1}-\xi_{i-1}- k)}{Z}\,,\quad\; \text{where  } \;\;Z=\sum_{k=2}^{\xi_{i+1}-\xi_{i-1}-2}\r_+(k)\r_+(\xi_{i+1}-\xi_{i-1}- k)\,.
\end{equation}
An important property of 
\eqref{arcsine} is that the probability $\a$ of the event $\xi_i'=\xi_{i-1}+2$ (or $\xi_i'=\xi_{i+1}- 2$) is positive, uniformly in 
the length $\xi_{i+1}-\xi_{i-1}$. 
The idea is to use this property to prove that with probability at least $\frac12\alpha^n$, 
uniformly in the initial configuration, the process hits the minimal configuration
\begin{equation}\label{minimale}
\xi_1=-L+2,\xi_2=-L+4,\dots,\xi_n=-L+2n\,,
\end{equation} 
before time $n^2$. Once this result is available, one concludes with a simple coupling argument. 
Indeed, using an independent coupling, the total variation distance between two evolutions at time $t=m\times n^2$, with arbitrary  initial conditions, is bounded above by $(1-\a^{2n}/4)^m\leq e^{-m\a^{2n}/4}$. Thus, 
the mixing time of this chain is at most $8n^2\a^{-2n}$. Using e.g.\ \eqref{trelmix} we obtain the lemma. 

To prove the above claim, we use the notation $(\t_m,\ell_m)_{m\geq 1}$ for the sequence of updating marks, that is the $m$-th update occurs at time $\t_m$ and it concerns the $\ell_m$-th particle, 
where $\ell_m$ is a number in $\{1,\cdots,n\}$.
Consider the event $E$ that there exist integers $1\leq \phi(1)<\phi(2)<\cdots<\phi(n)$ such that 
$\t_{\phi(n)}\leq n^2$, 
$\ell_{\phi(i)}=i$, 
and such that for every $i=1,\cdots,n$ and $\phi(i-1)<j\leq \phi(i)$, 
one has $ \ell_j\neq \ell_{\phi(i-1)}$ (where $\phi(0)=0$ and $\ell_0=0$). In words, $E$ is the event that 
within time $n^2$ there has been a sequence of $n$ updates at times $\t_{\phi(1)},\dots,\t_{\phi(n)}$, such that
the update at time $\t_{\phi(i)}$ 
concerned the $i$-th particle and such that the $i$-th particle is not touched again 
before time $\t_{\phi(i+1)}$. Conditioned on the event $E$ one has a probability of at least $\a^n$ of 
hitting the configuration described in \eqref{minimale}. Indeed, at time $\t_{\phi(1)}$ we set $\xi_1=2$ with probability $\a$, at time $\t_{\phi(2)}$ we set $\xi_2=4$ with probability $\a$, and so on. Therefore, to prove the lemma it remains to show that the event $E$ has probability at least $1/2$.  This can be easily seen as follows. Consider the event $F$ that a sequence of integers 
$1\leq \psi(1)<\psi(2)<\cdots<\psi(n)$ exists such that 
$\t_{\psi(n)}\leq n^2$, and
$\ell_{\psi(i)}=i$, for every $i=1,\cdots,n$. Let $\t^{(1)}$ denote the first time particle $1$ is updated and define recursively $\t^{(i)}$ as the first time after $\t^{(i-1)}$ when particle $i$ is updated. Clearly, 
$F=\{\t^{(n)}\leq n^2\}$.  Using $\bbE[\t^{(n)}]=n$, 
by Markov's inequality the probability of $F$ is at least $1-n/n^2\geq 1/2$ for $n\geq 2$.
%
On the other hand $E=F$, since one can define the sequence $\phi$ from $\psi$ 
by choosing $\phi(n)=\psi(n)$, and for $1\leq i \leq n-1$,
\begin{equation*}
 \phi(n-i)=\max\{j< \psi(n-i+1)\,:\; \ell_j=n-i\}.
\end{equation*}
This ends the proof of Lemma \ref{lemma:epsilon1}. \qed

\subsubsection{Proof of Proposition \ref{lemma:epsilon}}
The proof of Proposition \ref{lemma:epsilon} is based on a block-dynamics argument that allows one to reduce to $n\leq \e_1\log L$ particles, in which case the result will follow from Lemma \ref{lemma:epsilon1}. Fix an integer $K\leq n$. A block is a collection of particles with adjacent labels, and 
our particles  will be partitioned into $\D:=\lfloor n/K\rfloor$ 
non-overlapping blocks, in such a way that the first $\D-1$ blocks
 contain $K$ particles, and the last block contains 
at most $K-1$ particles.
For the sake of simplicity, since it
does not change any of our estimates, 
we will suppose that all blocks have exactly $K$ particles, i.e. $n=K\D$. With this notation, the configuration of the $i$-th block can be described by the variables 
\begin{equation}\label{ithB}
(\xi_{K(i-1)+1},\dots,\xi_{Ki})\,,\qquad i=1,\dots,\D\,.
\end{equation}

As usual, the $\xi_i$ are interpreted as particle positions or crossing positions.
 To define the block-dynamics, we consider independent Poisson(1) clocks on each block, when
one of them rings we put all crossings of the relative block simultaneously at equilibrium 
 conditioned on the position of all crossings belonging to the other blocks.
 That is, if $B_i$ denotes the $i$-th block \eqref{ithB}, and 
$\var_{B_i}$ is  the variance with respect to 
\begin{equation*}
\nu_n(\cdot\tc B_j,\;j\neq i)\,,
\end{equation*}
then the Dirichlet form of the block-dynamics is given by
\begin{equation}\label{bbll}
 \cE_{\rm bl}^n(f,f)=\sum_{i=1}^{\D}\nu_n[\var_{B_i}(f)]\,,
\end{equation}
for any function $f$ of the particle positions.
Call $\gap_{\rm bl}^n$  the associated  spectral gap.
The gap of the original dynamics for a single block $B_i$ is given by $\gap_{\rm eq}^K(\ell)$, where 
$\ell=\xi_{iK+1}-\xi_{(i-1)K}$ is the length of the portion of the system occupied by the $K$ particles in the $i$-th block:
$$
\gap_{\rm eq}^K(\ell)\,\var_{B_i}(f)\leq \nu_n\left[\var_{\nu_n^i}(f)\tc B_j,\;j\neq i\right]\,,
$$
where $\var_{\nu_n^i}(f)$ is as in \eqref{dirieqn1}. Using this estimate in \eqref{bbll}, taking the $\nu_n$-expectation and summing over $i$ one obtains  
\begin{equation}
\label{eq:formule}
\gap^{n}_{\rm eq}(L)\geq \gap_{\rm bl}^n\times
 \min_{\ell} \gap^{K}_{\rm eq}(\ell)\,.
\end{equation}
Note that this is a special case of a well known estimate 
that controls the gap 
of the original dynamics in terms of the gap of the block-dynamics (see e.g.\ \cite{MAR_FLOUR}). 
From Lemma \ref{lemma:epsilon1} we know that $ \gap^{K}_{\rm eq}(\ell)\geq e^{-c\,K}$ uniformly in the length $\ell$. Therefore, Proposition \ref{lemma:epsilon} follows directly from Lemma \ref{lemma:gapbk} below by taking $K=\e_1\log L$, $K\leq n\leq c_o\log L$,  with $\e_1$ sufficiently small (depending on $\e$) and $\D=n/K$ (the case $n\leq \e_1\log L$ being contained in Lemma \ref{lemma:epsilon1}). 

\label{subsec:block}
\begin{lemma}
For all $\D\geq 1$, there exists $c(\D)>0$ such that for all $L$ and  $\D\leq n\leq c_o \log L$, 
\label{lemma:gapbk}
 \begin{equation}\label{5delta}
  \gap_{\rm bl}^n\geq c(\D)\,K^{-4\D}\,,\quad \;K=n/\D\,.
 \end{equation}
\end{lemma}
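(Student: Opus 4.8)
The plan is to prove \eqref{5delta} by induction on $\Delta$, peeling off the leftmost block and losing a polynomial‑in‑$K$ factor at each step. The base case $\Delta=1$ is immediate: with a single block of $n=K$ particles the block dynamics resamples the whole configuration at rate one, so $\gap_{\rm bl}^n=1\geq c(1)K^{-4}$. For the inductive step, group the $\Delta$ blocks $B_1,\dots,B_\Delta$ into the two super‑blocks $\mathcal A:=B_1$ and $\mathcal B:=B_2\cup\dots\cup B_\Delta$, and apply the standard two‑scale comparison for block dynamics (see e.g.\ \cite{MAR_FLOUR}): since the original blocks $B_1,\dots,B_\Delta$ partition into $\mathcal A$ and $\mathcal B$, one has the identity $\cE_{\rm bl}^n=\cE^{\rm in}_{\mathcal A}+\cE^{\rm in}_{\mathcal B}$ between Dirichlet forms, whence
\begin{equation*}
\gap_{\rm bl}^n\;\geq\;\gap_{\rm super}\cdot\min\big\{\gap^{\rm in}_{\mathcal A},\ \gap^{\rm in}_{\mathcal B}\big\},
\end{equation*}
where $\gap_{\rm super}$ is the spectral gap of the dynamics resampling an entire super‑block at rate one, and $\gap^{\rm in}_{\mathcal A}$ (resp.\ $\gap^{\rm in}_{\mathcal B}$) is the minimum, over frozen configurations of $\mathcal B$ (resp.\ $\mathcal A$), of the gap of the $\Delta$‑block dynamics restricted to moves inside $\mathcal A$ (resp.\ $\mathcal B$). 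Restricted to moves inside $\mathcal A$, the dynamics is a single full resampling of $B_1$ with the rest frozen, so $\gap^{\rm in}_{\mathcal A}=1$. Restricted to moves inside $\mathcal B$, it is exactly the $(\Delta-1)$‑block dynamics for the $n-K$ particles $\xi_{K+1},\dots,\xi_n$ on the segment $[\xi_K,L]$ with $\xi_K$ frozen, so by the inductive hypothesis $\gap^{\rm in}_{\mathcal B}\geq c(\Delta-1)K^{-4(\Delta-1)}$, uniformly in the position of $\xi_K$. Thus the induction closes with $c(\Delta)=c\,c(\Delta-1)$ once we show the remaining estimate $\gap_{\rm super}\geq c\,K^{-4}$ for an absolute constant $c>0$.

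To bound $\gap_{\rm super}$ I would run a monotone coupling of two copies of the super‑block dynamics (monotonicity of the heat‑bath dynamics, Section \ref{mono}, is inherited by the block and super‑block versions). The key simplification is that a resampled $\mathcal B$ depends on the rest only through the interface particle $\xi_K$, and a resampled $\mathcal A=B_1$ only through $\xi_{K+1}$; so the coupling reduces to reconciling the pair $(\xi_K,\xi_{K+1})$ in the two copies, with the monotone coupling guaranteeing $\xi^X_K\geq\xi^Y_K$ and $\xi^X_{K+1}\geq\xi^Y_{K+1}$. Under $\nu_n$ the law of $(\xi_K+L,\xi_{K+1}+L)$ is proportional to $\r_+^{*K}(\xi_K+L)\,\r_+(\xi_{K+1}-\xi_K)\,\r_+^{*(n-K)}(L-\xi_{K+1})$, and because $\r_+(j)\asymp j^{-3/2}$ (Lemma \ref{equo}, \eqref{3/2+}), each convolution $\r_+^{*m}$ is heavy‑tailed with a single dominant increment. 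Consequently each conditional law splits into a ``boundary‑hugging'' part — the resampled particle lies within $O(\mathrm{poly}(K))$ of an endpoint $-L$ or $L$, the leftover being absorbed by one large increment — and a ``far'' part; and the boundary‑hugging part is, to leading order in $1/L$, independent of the location of the far endpoint. This yields a short chain of reconciling events, each of probability $\gtrsim K^{-1}$ per unit time (uniformly in $L$, using the convolution local‑limit estimates descending from Lemma \ref{equo}, together with the monotone‑coupling inclusion $\{\xi^Y_{K+1}\geq L-T\}\subseteq\{\xi^X_{K+1}\geq L-T\}$ to force both copies into the good event simultaneously): first a resampling of $\mathcal B$ drives $|\xi^X_{K+1}-\xi^Y_{K+1}|$ down to $O(\mathrm{poly}(K))$, then a resampling of $\mathcal A$ does the same for $\xi_K$, then a resampling of $\mathcal A$ in its ``small $\xi_K$'' regime couples $\xi^X_K=\xi^Y_K$, after which one resampling of $\mathcal B$ couples $\xi^X_{K+1}=\xi^Y_{K+1}$ and the two copies coalesce forever. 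The total coalescence time is $O(\mathrm{poly}(K))$ uniformly in $L$, so $\max_{x,y}\|\nu^x_t-\nu^y_t\|\leq e^{-ct/K^{a}}$ for some absolute $a$, and \eqref{clasgap} gives $\gap_{\rm super}\geq c\,K^{-a}$; since the analysis above comfortably allows $a\leq 4$ (in fact $a=1$), the claim follows.

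The main obstacle is the estimate on $\gap_{\rm super}$, and inside it the two delicate points: (i) making rigorous, uniformly in $L$, the decomposition of the heavy‑tailed conditional laws into a $K$‑controlled boundary part and a far part that the coupling can match — this rests on quantitative local‑limit‑type bounds for $\r_+^{*m}$ following from $\r_+(j)\asymp j^{-3/2}$ (Lemma \ref{equo}, \eqref{3/2+}), of the same flavour as the estimates used around \eqref{arcsine} and \eqref{bacon}; and (ii) the uniformity needed in the two‑scale step, namely that $\gap^{\rm in}_{\mathcal B}$ is bounded below by $c(\Delta-1)K^{-4(\Delta-1)}$ over \emph{all} admissible positions of the frozen particle $\xi_K$, including the degenerate case where only $O(K)$ room is left to the right of $\xi_K$ — but there the $(\Delta-1)K$ particles are nearly packed and the gap of that dynamics is bounded below by an absolute constant, so no conflict with the inductive bound arises. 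Everything else — irreducibility, the product form \eqref{nu_n} of $\nu_n$, monotonicity of the (super‑)block dynamics, and the passage from a coalescence‑time bound to a spectral‑gap bound via \eqref{clasgap} (and, if preferred, \eqref{trelmix}) — is routine.
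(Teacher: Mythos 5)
Your strategy is genuinely different from the paper's. The paper does not induct on $\D$: it runs a single coupling of two copies of the full $\D$-block dynamics, showing that within one unit of time a prescribed sequence of $2\D-1$ block updates (first pushing $B_\D,\dots,B_2$ to the right of $L-(\D-1)L^{1/3}$, then matching $B_1,B_2,\dots$ from left to right) succeeds with probability at least $c(\D)(5(K+1))^{-4\D}$; the two probabilistic ingredients — the one-big-jump estimate of Lemma \ref{lemma:ungros}, giving probability $\asymp 1/(K+1)$ of squeezing a resampled block against an endpoint, and the $O(L^{-2/3})$ insensitivity of the resampling law to the far endpoint (the analogue of \eqref{evemo}) — are exactly the ones you invoke. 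So your decomposition into $\gap_{\rm super}\cdot\min\{\gap^{\rm in}_{\cA},\gap^{\rm in}_{\cB}\}$ plus induction is a legitimate alternative architecture, and the two-scale inequality itself is correct since $\cE^n_{\rm bl}=\nu_n[\cE^{\rm in}_{\cA}]+\nu_n[\cE^{\rm in}_{\cB}]$ by the renewal structure of $\nu_n$.

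There is, however, a genuine gap in the inductive step. To close the induction you need the lemma not as stated, but \emph{uniformly over the length of the ambient segment}: $\gap^{\rm in}_{\cB}$ is the gap of the $(\D-1)$-block dynamics for $n-K$ particles on $\{\xi_K,\dots,L\}$, and $2\ell=L-\xi_K$ ranges all the way down to $2(n-K)$. The heavy-tailed estimates that drive the coupling (Lemma \ref{lemma:ungros} and the $O(\ell^{-2/3})$ comparison of conditional laws) require $n=o(\ell^{1/6})$, so they fail precisely in the short-segment regime $\ell=O(\mathrm{poly}(n))$; your one-line claim that the gap is then bounded below by an absolute constant because the particles are ``nearly packed'' is not justified and is not obviously true — a crude conductance bound there gives something like $e^{-cn}$, which destroys the $K^{-4\D}$ form (though not the application to Proposition \ref{lemma:epsilon}). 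The paper's non-inductive coupling sidesteps this entirely: whenever a block would have to be resampled on a short segment, its right endpoint is already within $L^{1/3}$ of $L$ and the desired event holds automatically. A second, smaller issue is your probability accounting for $\gap_{\rm super}$: each reconciling step requires the good event in \emph{both} copies, so the per-step cost is $\asymp K^{-2}$, not $K^{-1}$, and the claim that ``$a=1$'' is achievable is wrong; a correct count (two steps at cost $K^{-2}$ each, then a free diagonal matching of $\cB$ once $\xi_K$ agrees) gives $\gap_{\rm super}\geq c\,K^{-4}$, which still closes the induction, but this needs to be done carefully rather than asserted.
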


Before proving Lemma \ref{lemma:gapbk}, we need to establish some preliminary facts. 
Consider $\nu_n^{2L}:=\nu_n$, the equilibrium measure \eqref{nu_n} of the system of $n$ particles over the segment $\{-L,\dots,L\}$. 
Passing to the  increment variables $\z_i:=\xi_i - \xi_{i-1}$, $i=1,\dots,n+1$ one writes
$\nu_n^{2L}$ as the conditional probability $\r_+^{\otimes (n+1)}
\big(\cdot\tc\sum_{i=1}^{n+1}\z_i=2L\big)$. 

\begin{lemma}
\label{lemma:ungros}
For all   $n=o(L^{1/6})$: 
\begin{equation*}
\nu_n^{2L}(\z_1\geq 2L-L^{1/3})=\frac{1}{n+1}(1+o(1))\,.
\end{equation*}
\end{lemma}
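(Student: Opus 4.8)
\medskip
\noindent\textit{Proof proposal.}
The plan is to view $\nu_n^{2L}$ as the law of $N:=n+1$ i.i.d.\ random variables $\z_1,\dots,\z_N$ with common law $\r_+$, conditioned on $\{\z_1+\dots+\z_N=2L\}$, and to exploit the heavy tail of $\r_+$. Since $2^{-j}Z_j^+\sim c_+(j/2)^{-3/2}$ on even $j$ by \eqref{3/2+}, one has $\r_+(j)\sim c_*\,j^{-3/2}$, hence $\IP(\z_1>s)\le C s^{-1/2}$, $\E[\z_1\wedge m]\le C\sqrt m$, and therefore, truncating at $t/2$ and using a union bound, $\IP\big(\z_{i_1}+\dots+\z_{i_k}>t\big)\le C\,k\,t^{-1/2}$ for any $k$ distinct indices. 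Because $n=o(L^{1/6})$ forces $N^2=o(L^{1/3})$, this gives in particular $\IP(\z_2+\dots+\z_N>L^{1/3})=o(1)$. I would then carry out three steps.

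\emph{(i) Reduction.} For $L$ large the events $\{\z_i\ge 2L-L^{1/3}\}$, $i\le N$, are pairwise disjoint on $\{\sum_j\z_j=2L\}$ (two of them would force the total to exceed $2L$), so by exchangeability
\[
\nu_n^{2L}\!\left(\z_1\ge 2L-L^{1/3}\right)=\tfrac1N\,\nu_n^{2L}\!\left(\max_i\z_i\ge 2L-L^{1/3}\right)=\tfrac1N\big(1-\nu_n^{2L}(\mathcal B)\big),\qquad \mathcal B:=\{\max_i\z_i< 2L-L^{1/3}\},
\]
and it suffices to prove $\nu_n^{2L}(\mathcal B)=o(1)$, i.e.\ $\IP\big(\mathcal B,\ \textstyle\sum_j\z_j=2L\big)=o(D)$ where $D:=\IP(\sum_j\z_j=2L)$.

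\emph{(ii) Denominator.} On $\{\sum_j\z_j=2L\}$ one has $\{\z_1\ge 2L-L^{1/3}\}=\{\z_2+\dots+\z_N\le L^{1/3}\}$, so
\[
\IP\big(\z_1\ge 2L-L^{1/3},\ \textstyle\sum_j\z_j=2L\big)=\sum_{0\le m\le L^{1/3}}\r_+(2L-m)\,\IP(\z_2+\dots+\z_N=m)=c_*(2L)^{-3/2}(1+o(1)),
\]
since $\r_+(2L-m)=c_*(2L)^{-3/2}(1+o(1))$ uniformly for $m\le L^{1/3}=o(L)$ and $\IP(\z_2+\dots+\z_N\le L^{1/3})=1-o(1)$; summing over the $N$ disjoint events of (i) gives $D\ge N\,c_*(2L)^{-3/2}(1+o(1))$ (the same identity also pins down the numerator of the claimed asymptotics).

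\emph{(iii) One big jump — the main work.} I would show $\IP\big(\mathcal B,\ \sum_j\z_j=2L\big)=o\big(NL^{-3/2}\big)$. On $\mathcal B\cap\{\sum_j\z_j=2L\}$ the largest increment, say $\z_{i^\star}$, satisfies $\z_{i^\star}\ge 2L/N$ but $\z_{i^\star}<2L-L^{1/3}$, so the other increments carry a total $\sum_{j\ne i^\star}\z_j=2L-\z_{i^\star}>L^{1/3}$. A union bound over the position of the maximum, followed by discarding the (favorable) maximality constraint, gives
\[
\IP\big(\mathcal B,\ \textstyle\sum_j\z_j=2L\big)\ \le\ N\!\!\!\sum_{2L/N\le a< 2L-L^{1/3}}\!\!\!\r_+(a)\,\IP(\z_2+\dots+\z_N=2L-a).
\]
Here $2L-a>L^{1/3}\gg N^2$, so $\z_2+\dots+\z_N=2L-a$ is a big-jump event and the subexponential \emph{local} estimate $\IP(\z_2+\dots+\z_N=m)\le C\,N\,m^{-3/2}$ applies, uniformly for $m\gg N^2$, precisely because $\r_+(j)\asymp j^{-3/2}$ (for the range $N=O(\log L)$ actually used in this lemma one can derive it by induction on $N$, splitting the convolution $\r_+*\r_+^{\,*(N-1)}$ at half the total). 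Together with $\r_+(a)\le C a^{-3/2}$ and then splitting the sum at $a=L$ and using $\sum_{a\ge A}a^{-3/2}\le C A^{-1/2}$ and $\sum_{L^{1/3}<k<L}k^{-3/2}\le C L^{-1/6}$, this yields
\[
\IP\big(\mathcal B,\ \textstyle\sum_j\z_j=2L\big)\ \le\ C\,N^2\!\!\!\sum_{2L/N\le a< 2L-L^{1/3}}\!\!\! a^{-3/2}(2L-a)^{-3/2}\ \le\ C'\big(N^{5/2}L^{-2}+N^{2}L^{-5/3}\big),
\]
which is $o(NL^{-3/2})$ since $N^{3/2}L^{-1/2}\to0$ and $NL^{-1/6}\to0$ when $n=o(L^{1/6})$. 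Combined with (ii) this gives $\nu_n^{2L}(\mathcal B)=o(1)$, hence the lemma.

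The step I expect to be the real obstacle is the local big-jump bound $\IP(\z_2+\dots+\z_N=m)\le C N m^{-3/2}$ and, in particular, checking its validity when $N$ grows with $L$ rather than being fixed; this is the only non-elementary ingredient and the one place where the heavy $j^{-3/2}$ tail of the excursion law $\r_+$ — the same feature highlighted in Step~4 of the proof outline of Section~\ref{cinquemezzi} — genuinely enters. The rest is bookkeeping with $j^{-3/2}$ sequences.
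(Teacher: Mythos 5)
Your steps (i)--(ii) are fine, and they reproduce (in a slightly rearranged form) what the paper actually does for the numerator: on $\{\sum_j\z_j=2L\}$ the event $\{\z_1\ge 2L-L^{1/3}\}$ is the event $\{\z_2+\dots+\z_N\le L^{1/3}\}$, and $\r_+(2L-m)=\r_+(2L)(1+O(L^{-2/3}))$ uniformly for $m\le L^{1/3}$ gives the value $c_*(2L)^{-3/2}(1+o(1))$. The paper then disposes of the denominator in one line by citing Doney's local large deviation theorem \cite{DON}, which gives $\r_+^{\otimes(n+1)}\big(\sum_{i=1}^{n+1}\z_i=2L\big)=(n+1)(1+o(1))\,\r_+(2L)$ precisely because $2L\gg (n+1)^2$; the lemma follows immediately, and your steps (i) and (iii) are an attempt to re-derive that citation from scratch (your disjointness argument gives the matching lower bound on the denominator elementarily, which is a nice observation).

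The gap is exactly where you suspected it, and it is real: the local estimate $\IP(\z_2+\dots+\z_N=m)\le C\,N\,m^{-3/2}$ with a constant \emph{uniform in $N$} does not follow from the induction you propose. Splitting the convolution $\r_+*\r_+^{*(N-1)}$ at half the total gives, at each step, $C_N\le 2^{3/2}(C_{N-1}+c)$, hence $C_N\asymp 2^{3N/2}$. This is fatal for the lemma as stated ($n=o(L^{1/6})$, so $2^{3N/2}$ can be super-polynomial in $L$), and even in the regime of the application ($n\le c_o\log L$ with $c_o$ large, fixed by \eqref{ooo}) it injects a factor $L^{3c_o/(2\log 2)}$ that destroys your final comparison $N\,C_N\,L^{-5/3}=o(N L^{-3/2})$, which tolerates at most $C_N=o(L^{1/6})$. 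The bound you want is true, but it is essentially the upper-bound half of Doney's theorem; the elementary route to it requires a concentration-function estimate $\sup_m\IP(S_j=m)=O(j^{-2})$ (itself a local-limit-theorem statement, e.g.\ via characteristic functions), after which one conditions on the index of the largest increment. So either supply that input or, as the paper does, simply invoke \cite{DON} for the denominator --- in which case steps (i) and (iii) can be deleted and your step (ii) already completes the proof.
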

\begin{proof}[Proof of Lemma \ref{lemma:ungros}]
Observe that
\begin{align*} 
 \nu_n^{2L}(\z_1\geq 2L-L^{1/3})&=
	\frac{\sum_{i\geq 2L-L^{1/3}}\rho_+(i) \rho_+^{\otimes (n)}(\sum_{i=1}^{n}\z_i=2L-i)}
	{\rho_+^{\otimes (n+1)}(\sum_{i=1}^{n+1}\z_i=2L)}\,.
\end{align*}
Recall that $\r_+(j)\sim c_+j^{-3/2}$, for some known constant $c_+>0$; see \eqref{3/2+}. 
In particular, 
\begin{equation}\label{c_+}
\r_+(i)=\r_+(2L) \left(1+O\big(L^{-2/3}\big)\right)\,,\quad  
\;i\in\{2L-L^{1/3},\cdots,2L\}\,.
\end{equation} 
Also, since $2L\gg (n+1)^2$, using  \cite[Theorem A]{DON}, 
one has 
\begin{equation}\label{doney}
\rho_+^{\otimes (n+1)}\Big(\sum_{i=1}^{n+1}\z_i=2L\Big) = (n+1) (1+o(1))\,\rho_+(2L)\,.
\end{equation}
Therefore,
$$
(n+1)\nu_n^{2L}(\z_1\geq 2L-L^{1/3}) = (1+o(1))\,\rho_+^{\otimes (n)}\Big(\sum_{i=1}^{n}\z_i\in[0,L^{1/3})\Big)\,.
$$
It remains to show that $ \rho_+^{\otimes (n)}\big(\sum_{i=1}^{n}\z_i\in[0,L^{1/3})\big)=1+o(1)$. However, this is an immediate consequence of the assumption $n=o(L^{1/6})$
and well known estimates for heavy tailed random variables.
\end{proof}

\medskip

\begin{proof}[Proof of Lemma \ref{lemma:gapbk}]
As a corollary of Lemma \ref{lemma:ungros}, one has that 
for $L$ large enough, uniformly in $2\ell\geq L^{1/3}$ and in $n=o( \ell^{1/6})$, 
\begin{equation}
\label{elluntier}
 \nu_n^{2\ell}(\z_1>2\ell-L^{1/3})\geq \nu_n^{2\ell}(\z_1>2\ell-\ell^{1/3})\geq \frac{1}{2(n+1)}\,.
\end{equation}
We shall use this observation together with a coupling argument. 
Start the dynamics at two arbitrary 
initial configurations $\xi,\xi'$ of $n$ particles. We call
$\xi(t),\xi'(t)$ the corresponding
states of the dynamics at time $t$.  
It will be shown that for a suitable coupling $\bbP$:
\begin{equation}
\label{claimc}
\bbP(\xi(1)=\xi'(1))\geq c(\D)(5(K+1))^{-4\D}\,.
\end{equation}
Once this estimate is available the conclusion follows easily. Indeed, \eqref{claimc} implies that   
at time $T$ the total variation distance between $\xi(T)$ and $\xi'(T)$, 
is bounded above by $$(1-c(\D)(6(K+1))^{-4\D})^T\leq e^{-T\,c(\D)(5(K+1))^{-4\D}}\,,$$ and  
therefore the mixing time of the chain is bounded by $2c(\D)^{-1}(5(K+1))^{4\D}$. Using \eqref{trelmix} we obtain the same bound for the inverse spectral gap. After adjusting the constant $c(\D)$, this proves Lemma \ref{lemma:gapbk}.

\begin{figure}[htp]
\begin{center}
\includegraphics[width=0.9\textwidth]{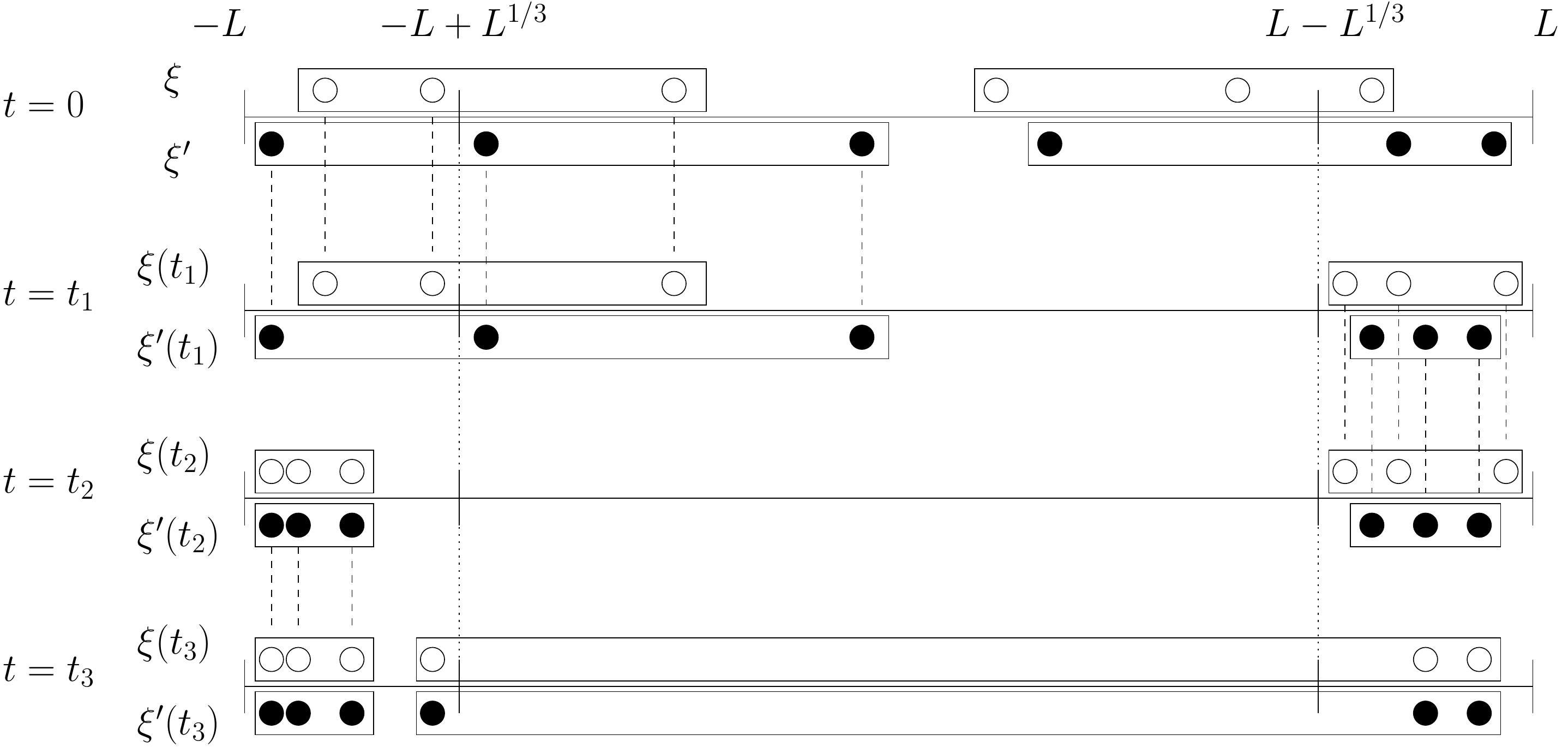}
\end{center}
\caption{Illustration of the coupling used in the proof of Lemma \ref{lemma:gapbk} in the case $\D=2$, $n=6$ and $K=3$. At time $t_1$, all particles in the second block are placed to the right of $L-L^{1/3}$. At time $t_2$, all particles in the first block are matched to the left of $-L+L^{1/3}$. At time $t_3$, all particles in the second block are matched. 
}
\label{fig:coupling} 
\end{figure}

To prove \eqref{claimc} we proceed as follows. 
We have $\D$ independent Poisson(1) clocks at each block, and   we note $(t_m,c_m)_{m\geq 1}$ the sequence of update marks:
for any $m\geq 1$, $t_m$ is the time of the $m$-th update and $c_m\in\{1,\cdots,\D\}$ is the 
label of the block to be updated at time $t_m$. 
Consider the event $E$ that within time $t=1$ a sequence of updates
$t_1,\dots,t_{\D-1},t_\D,t_{\D+1},\dots,t_{2\D-1}$ has occurred, such that $c_m=\D-m+1$ for 
$m=1,\dots \D$, and $c_{\D+i}=i+1$ for $i=1,\dots,\D-1$. 
Clearly, $E$ has a positive probability $c(\D)>0$. 
On the event $E$ we 
define a coupling of the two evolutions by using  the same marks $(t_m,c_m)$, 
i.e.\ at time $t_m$ we update block $c_m$ in both configurations. We refer to Figure \ref{fig:coupling} for a representation of the case $\D=2$. 
At time $t_1$ we have to update the particle positions in the block $B_\D$. 
The corresponding equilibrium is of the form $\nu_K^{2\ell}$, with $2\ell=L-\xi_{K(\D-1)}$. Then, the new particle positions will satisfy 
\begin{equation}
\label{elluntieras}
\xi_{K(\D-1)+1}\geq L-L^{1/3}\,,\quad\text{ with probability at least }\; \frac1{2(K+1)}\,.
\end{equation} 
Indeed, either 
$\xi_{K(\D-1)}\geq L-L^{1/3}$  and the requirement is automatically satisfied because of  $\xi_{K(\D-1)+1}\geq\xi_{K(\D-1)}$, or $\xi_{K(\D-1)}< L- L^{1/3}$, in which case $2\ell\geq L^{1/3}$ and the claim follows from \eqref{elluntier}. 
Therefore, using an independent coupling, one has a probability at least $(4(K+1)^2)^{-1}$ for the event 
$$\min\{\xi_{K(\D-1)+1}(t_1),\xi'_{K(\D-1)+1}(t_1)\} \geq L-L^{1/3}\,.$$
Then, one updates the block $B_{\D-1}$ at time $t_2$, 
and so on until one updates the block $B_2$ at time $t_{\D-1}$. 
Iterating the  argument given above, one has a probability of at least $(4(K+1)^2)^{-\D+1}$ for the event 
\begin{equation}\label{tieras}
 \min\{\xi_{K+1}(t_{\D-1}),\xi'_{K+1}(t_{\D-1})\} \geq L-(\D-1)L^{1/3}\,.
\end{equation}
Next, at time $t_\D$, one updates the block $B_1$. 
Let us show that on the event \eqref{tieras} there is a coupling $\bbP$ of the two equilibria on block $B_1$ 
(conditioned on the value of $\xi_{K+1}(t_{\D-1})$ and $\xi_{K+1}'(t_{\D-1})$ respectively) 
such that the event $$M_1:=\{\xi_i(t_{\D})=\xi'_i(t_{\D})\leq -L+L^{1/3}\,,\;\, i=1,\dots,K\}\,,$$
has  probability 
\begin{equation}
\label{evem}
\bbP(M_1)\geq (5(K+1)^2)^{-1}\,.
\end{equation}
Clearly, it suffices to show that $\xi_K(t_{\D})=\xi_K'(t_{\D})\leq -L+L^{1/3}$ with at least that probability
(all other particles $\xi_1(t_{\D}),\dots,\xi_{K-1}(t_{\D})$ are then automatically matched using the diagonal coupling since $B_1$ is the first block and $\xi_0=\xi_0'=-L$ is fixed). 
Setting $2\ell = \xi_{K+1}(t_{\D-1})+L$ and $2\ell' = \xi'_{K+1}(t_{\D-1})+L$, we need to couple 
the measures $\nu_K^{2\ell}(\xi_K=i)$, $\nu_K^{2\ell'}(\xi'_K=i)$. Consider first the problem of
coupling $\nu_1(i):=\nu_K^{2\ell}(\xi_K=L+i\tc \xi_K\leq -L+L^{1/3})$ and 
$\nu_2(i):=\nu_K^{2\ell'}(\xi'_K=L+i\tc \xi'_K\leq -L+L^{1/3})$. 
We have
\begin{equation}
\label{evemo}
\|\nu_1 - \nu_2\| = O(L^{-2/3})\,.
\end{equation}
Indeed, first note that, by \eqref{tieras}, one has 
\begin{equation}
\label{evemos}
2L\geq 2\ell\,,\, 2\ell'\geq 2L-(\D-1)L^{1/3}\,.
\end{equation} 
With the notation $\rho_+^{\otimes (K)}(i):=\rho_+^{\otimes (K)}(\sum_{j=1}^K\z_j=i)$ one has
\begin{align*}
\|\nu_1 - \nu_2\|&=\frac12\sum_{i=1}^{L^{1/3}}
|\nu_1(i)-\nu_2(i)|\\ &=
\frac12\sum_{i=1}^{L^{1/3}}
\Big|
\frac{\rho_+^{\otimes (K)}(i)\rho_+(2\ell-i)}{\sum_{m=1}^{L^{1/3}}\rho_+^{\otimes {K}}(m)\rho_+(2\ell-m)}-
\frac{\rho_+^{\otimes (K)}(i)\rho_+(2\ell'-i)}{\sum_{m=1}^{L^{1/3}}\rho_+^{\otimes {K}}(m)\rho_+(2\ell'-m)}
\Big|\,.
\end{align*}
Using \eqref{c_+} and \eqref{evemos} one has $\rho_+(2\ell'-i)=\rho_+(2\ell-j)(1+O(L^{-2/3}))$ uniformly in $0\leq i,j\leq L^{1/3}$.
Therefore \eqref{evemo} follows from 
$$
\|\nu_1 - \nu_2\|=O(L^{-2/3})\times
\sum_{i=1}^{L^{1/3}}\frac{\rho_+^{\otimes (K)}(i)\rho_+(2\ell-i)}{\sum_{m=1}^{L^{1/3}}\rho_+^{\otimes {K}}(m)\rho_+(2\ell-m)}
= O(L^{-2/3})\,.
$$
We turn to the proof of \eqref{evem}.
We define the coupling of $\nu_K^{2\ell}(\xi_K=i)$, $\nu_K^{2\ell'}(\xi'_K=i)$ as follows. Flip two independent coins with head probability $p=\nu_K^{2\ell}(\xi_K\leq -L+L^{1/3})$ and $p'=\nu_K^{2\ell'}(\xi'_K\leq -L+L^{1/3})$. If both coins end up being head, then sample the pair 
$\xi_K,\xi_K'$ according to the maximal coupling of $\nu_1,\nu_2$ defined by $\|\nu_1 - \nu_2\|$. If the $p$ coin is head and the $p'$ coin is tail then sample independently 
$\xi_K$ according to $\nu_K^{2\ell}(\cdot \tc \xi_K\leq -L+L^{1/3})$ and $\xi_K'$ according to $\nu_K^{2\ell'}(\cdot \tc \xi'_K> -L+L^{1/3})$.
Similarly, if the $p$ coin is tail and the $p'$ coin is head then sample independently $\xi'_K$ according to $\nu_K^{2\ell'}(\cdot \tc \xi'_K\leq -L+L^{1/3})$ and $\xi_K$ according to $\nu_K^{2\ell}(\cdot \tc \xi_K> -L+L^{1/3})$. Finally, if both coins are tail then sample independently $\xi_K,\xi'_K$ according to $\nu_K^{2\ell}(\cdot \tc \xi_K> -L+L^{1/3})$, $\nu_K^{2\ell'}(\cdot \tc \xi'_K> -L+L^{1/3})$. Using this coupling, we obtain that 
$$
\bbP(M_1)\geq p\,p'\,(1-\|\nu_1 - \nu_2\| ) \geq (4(K+1)^2)^{-1}(1+o(1))\,,
$$
where we have used the fact that $\min\{p,p'\}\geq 1/2(K+1)$ by \eqref{elluntier}. This ends the proof of \eqref{evem}.

Repeating the same argument leading to \eqref{evem} one shows that on the event $M_1$, the event 
$$M_2:=\{\xi_{K+i}(t_{\D+1})=\xi'_{K+i}(t_{\D+1})\leq -L+2L^{1/3}\,,\;\, i=1,\dots,K\}\,,$$
has  probability at least $(5(K+1)^2)^{-1}$. Thus, iterating, one 
concludes that 
$$
\bbP(\xi(1)=\xi'(1))\geq \bbP(E)\, (4(K+1)^2)^{-\D+1}\,(5(K+1)^2)^{-\D} \geq c(\D)(5(K+1))^{-4\D}\,.
$$
This ends the proof of \eqref{claimc}. 
 \end{proof} 

\subsection{Proof of Theorem \ref{th2}}\label{proofth2}
Concerning the bound $\trel^\cS\leq L^{5/2 +\e}$, we note that it follows from 
the same arguments used for the proof of Proposition \ref{gapes}. In fact, the situation is simpler here due to the absence of constraints on the number of crossings. We omit the details.

Concerning the lower bound, we can actually prove that for some constant $c>0$, one has
\begin{equation}
\label{low52}
\trel^\cS\geq c\,L^{5/2}\,(\log L)^{-1}\,.
\end{equation} 
We use an argument similar to that in \eqref{5202}. 
Recall that 
\begin{equation}
\label{trelvars}
\trel^\cS\geq \frac{\var_\nu(f)}{\cD(f,f)}
\end{equation}
for any $f:\cS\mapsto\bbR$, where $\nu$ is defined by \eqref{nus} and 
$$
\cD(f,f)=\sum_{x\in O_L}\nu\left[\var_{\nu_x}(f)\right]\,,
$$
$\nu_x$ denoting 
the conditional probability $\nu(\cdot\tc\si_y\,,\;y\neq x)$. 
Let $\z$ denote the number of $+1$ in $\si$, i.e.\
$$
\z(\si)=\sum_{x\in O_L} \ind(\si_x=+1)\,,\qquad\si\in\cS\,.
$$
Note that $\z\in\{0,\dots,L\}$. 
Define the function $f(\si) = g(\z(\si)/L)$, where $g:[0,1]\mapsto[-1,1]$ is given by 
$g(s)=-1$ for $s<1/4$, $g(s)=1$ for $s>3/4$, and by the  linear interpolation for $s\in[1/4,3/4]$. 
Since $\z=0$ and $\z=L$ have both positive probability uniformly in $L$, one has 
$\var(f)\geq c$ for some constant $c>0$. Let us estimate the Dirichlet form of $f$. 
We have
$$
\cD(f,f)\leq c\,L^{-2}\sum_{\si\in\cS}\nu(\si)
\sum_{x\in O_L}p_x(\si) \ind(L/4\leq \z\leq 3L/4)\,,
$$
for some constant $c$, where we write $p_x(\si)$ for the probability of a flip at $x$ in $\si$. Recall that the sum over $x$ of the probabilities $p_x(\si)$ between consecutive crossings give a contribution of order $1$; this follows easily from Lemma \ref{equo}. That yields $\sum_{x\in O_L}p_x(\si) \leq c\,\chi$ for a suitable constant $c$, where 
$\chi$  is the number of crossings \eqref{chi}. 
Therefore, adjusting the constant $c$:
$$
\cD(f,f)\leq c\,L^{-2}\,\nu\left(\chi; \;L/4\leq \z\leq 3L/4\right)\,.
$$
However, $$
\nu\left(\chi; \;L/4\leq \z\leq 3L/4\right) \leq \nu\left(\chi\,;\;\chi > c\,\log L\right) + c\,(\log L)\,\nu\left(L/4\leq \z\leq 3L/4\right) 
\,.
$$
 From Lemma \ref{equo} we deduce that $\nu\left(\chi\,;\;\chi > c\,\log L\right)=O(L^{-p})$ for some large $p$, provided $c$ is large. On the other hand $L/4\leq \z\leq 3L/4$ implies that there exists a crossing in some position $x\in\{-L+L/8,\dots,L-L/8\}$.  From the estimates of Lemma \ref{equo} this last event has probability $O(L^{-1/2})$, so that 
 \begin{equation}
\label{vars}
\nu\left(L/4\leq \z\leq 3L/4\right)  = O(L^{-1/2})\,.
\end{equation}
 In conclusion, adjusting the constant $c$, one has $\cD(f,f)\leq c\,L^{-5/2}\,\log L$. This ends the proof of 
 \eqref{low52}. \qed

\section*{Acknowledgements}
This work has been carried out while H.~Lacoin, F.~Simenhaus and F.L.~Toninelli
were visiting the Department of
Mathematics of the University of Roma Tre under the support of the ERC Advanced Research
Grant ``PTRELSS''.
They all gratefully acknowledge the kind hospitality and
support.


\begin{thebibliography}{99}

\bibitem{AF}
D.\ Aldous, J.\ Fill, {\em Reversible Markov Chains and Random Walks on Graphs}. Book in preparation, 
http://www.stat.berkeley.edu/\~{}aldous/RWG/book.html

\bibitem{BL1} J. Beltr\'an, C. Landim: {\em Metastability of reversible
    condensed zero-range processes on finite sets}. Online available
  at arXiv:0910.4089 (2009). 

\bibitem{BL2} J. Beltr\'an, C. Landim: {\em Tunneling and Metastability of continuous time Markov chains
}. Online available
  at arXiv:0910.4088 (2009). 
 
\bibitem{Bo} A.~Bovier. {\em Metastability}, Lectures given at the 5th
  Prague Summer School on Mathematical Statistical Physics,
  2006. Online available at \\
  http://www.wias-berlin.de/people/bovier/files/prague.pdf.


\bibitem{cf:dna2} T. Ambj\"ornsson, S. K. Banik, M. A. Lomholt, R. Metzler,
{\sl Master equation approach to DNA breathing in heteropolymer DNA},
Phys. Rev. E {\bf 75} (2007), 021908.


\bibitem{cf:dna1} A. Bar, Y. Kafri, D. Mukamel, {\sl 
Loop Dynamics in DNA Denaturation}, Phys. Rev. Lett. {\bf 98}, 038103
(2007).

\bibitem{CMT1} P.\ Caputo, F.\ Martinelli, F.L.\ Toninelli, {\em On the
    approach to equilibrium for a polymer with adsorption and
    repulsion}, Electronic Journal of Probability {\bf 13}, 213--258 (2008).

\bibitem{DON} R.A.\ Doney,  \textit{One-sided local large deviation and renewal theorems in the
              case of infinite mean}, Probab.\ Theory Related Fields {\bf 107}, 451--465 (1997). 


\bibitem{cf:GB} G. Giacomin, {\sl Random polymer models}, 
Imperial College Press, World Scientific (2007). 

\bibitem{Iso_Yos} Y.\ Isozaki, N.\ Yoshida, {\em Weakly pinned random
    walk on the wall: pathwise description of the phase transition},
  Stoch.\ Proc.\ Appl.\ {\bf 96}, 261--284 (2001).

\bibitem{Jerrum} M.\ Jerrum, J.-B.\ Son, P.\ Tetali, E.\ Vigoda, {\em 
Elementary bounds on Poincar\'e and Log-Sobolev constants for decomposable Markov chains}, 
Ann.\ Appl.\ Probab.\ {\bf 14}, 1741--1765 (2004).

\bibitem{LS}
G.\ Lawler, A.\ Sokal,  {\em Bounds on the $L^2$ spectrum for Markov chains and Markov processes: a generalization of Cheeger's inequality}, Trans.\ Amer.\ Math.\ Soc. {\bf 309}, 557�580 (1988).


\bibitem{LPW} D.\ Levin, Y.\ Peres, E.\ Wilmer, {\em Markov Chains and 
Mixing Times}, American Mathematical Society, Providence, RI, (2009).

\bibitem{MM} E.~Marcelli, F.~Martinelli,
{\em Some new results on the two-dimensional kinetic {I}sing model in the phase coexistence region},
J. Stat. Phys. {\bf 84}, No. 3-4, 655--696, (1996).

\bibitem{MR} R.\ Martin, D.\ Randall, \textit{Sampling adsorbing
    staircase walks using a new Markov chain decomposition method},
  41st Annual Symposium on Foundations of Computer Science, 492--502,
  IEEE Comput.\ Soc.\ Pres (2000)
  
\bibitem{MAR_FLOUR} F.\ Martinelli, \textit{Lectures on {G}lauber dynamics for discrete spin models}.
    Lectures on probability theory and statistics (Saint-Flour, 1997),
    Lecture Notes in Math. {\bf 1717},
    93--191, Springer (1999).

\bibitem{Miclo} L. Miclo,   {\sl 
An example of application of discrete Hardy's inequalities},  
Markov Proc.  Rel. Fields {\bf 5}(3) (1999), 319-330. 

\bibitem{metabook} E.\ Olivieri, M.E.\ Vares, {\em Large deviations and metastability}, Cambridge University Press (2005).

\bibitem{notePeres} Y.\ Peres, {\em Mixing for Markov chains and spin
    systems}, draft of lecture notes available at {\tt
    http://www.stat.berkeley.edu/\~{}peres},
  August 2005.

\bibitem{Sco} E. Scoppola. Renormalization group for Markov chains and
  application to metastability.  J. Stat. Phys. {\bf 73}, 83--121
  (1993). 

\bibitem{cf:Wilson} D.\ B.\ Wilson, {\em Mixing times of Lozenge
tiling and card shuffling Markov chains},  Ann.\ Appl.\ Probab.\ {\bf 14}, 274--325 (2004).



\end{thebibliography}
\end{document}